\newtheorem{theorem}{Theorem}[section]
\newtheorem{lemma}[theorem]{Lemma}
\theoremstyle{definition}
\newtheorem{definition}[theorem]{Definition}
\newtheorem{remark}[theorem]{Remark}
\numberwithin{equation}{section}
\begin{document}
\title []{New results on the global solvability and blow-up for a class of weakly dissipative Camassa-Holm equations}\thanks{This work was partially supported by the National Natural Science Foundation
of China (No. 12231008).}

\author{Lei Zhang, Bin Liu}
\address{School of Mathematics and Statistics, Hubei Key Laboratory of Engineering Modeling  and Scientific Computing, Huazhong University of Science and Technology  Wuhan 430074, Hubei, P.R. China.}
\email{lei\_zhang@hust.edu.cn}
\email{binliu@mail.hust.edu.cn}

\date{\today}

\begin{abstract}
In this paper, we consider the Cauchy problem for a class of weakly dissipative Camassa-Holm  equations in nonhomogeneous Besov spaces. First, we prove that the Cauchy problem admits a unique global strong solution in Besov spaces with proper condition on the dissipation parameter $\lambda>0$. The novel ingredients in the proof lies in transforming the equations into a class of damped Camassa-Holm equations, and performing a non-standard iterative method. It is shown that our result holds for the damped equations with more general time-dependent parameters, which improves the existed results from Sobolev spaces to Besov spaces without assuming any sign condition on the initial data. Second, we derive two kinds of blow-up criteria in suitable Sobolev spaces, which in some sense inform us how the dissipation parameter $\lambda$ influences the singularity formation of strong solutions.

\textbf{Keywords}:  Damped perturbation; Camassa-Holm type equations; Global Well-posedness; Besov spaces; Blow-up criteria.

\end{abstract}

\maketitle
\section{Introduction and main results}

In this paper, we consider the Cauchy problem of the following weakly dissipative two-component $b$-family equations over $\mathbb{K}=\mathbb{R}$ or $\mathbb{K}=\mathbb{T}=\mathbb{R}/2\pi\mathbb{Z}$ (cf. \cite{lenells2013weakly})
\begin{equation}\label{1.1}
\begin{cases}
 m_t+um_x+bu_xm +\kappa \sigma\sigma_x+\lambda m=0,& t>0,~x\in \mathbb{K},\\
 m=u-u_{xx},& t>0,~x\in \mathbb{K},\\
 \sigma_t+(u\sigma)_x+\lambda\sigma=0,& t>0,~x\in \mathbb{K},\\
 m(0,x)=m_0(x),~~\sigma(0,x)=\sigma(x),&  x\in \mathbb{K},
\end{cases}
\end{equation}
where $\lambda>0$ is the dissipative parameter, $\kappa=\pm 1$. The real dimensionless constant $b\in \mathbb{R}$ is a parameter that provides the competition, or balance, in fluid convection between nonlinear steepening and amplification due to stretching. The number $b$ is also the number of covariant dimensions associated with the momentum density $m$. The unknown $u(t, x)$ in \eqref{1.1} stands for the horizontal velocity of the fluid, and $\sigma(t, x)$  is related to the free surface elevation from equilibrium with the boundary assumption.

The local and global well-posedness, blow-up criteria for the system \eqref{1.1} without the dissipative terms, i.e., $\lambda=0$, have been studied by several authors, see for example \cites{liu2011cauchy,zong2012properties,zhu2013cauchy,gui2010global,guan2010global}. Note that the system \eqref{1.1} can be reduced to some well-known weakly dissipative Camassa-Holm type equations. For example, the system \eqref{1.1} reduces to the weakly dissipative Camassa-Holm equation when $b = 2$ and $\sigma(t, x)\equiv 0$ \cites{guo2008blow,wu2009global,novruzov2012blow,shen2011time}. The local and global well-posedness in $H^s$ with $s >  \frac{3}{2}$ are established in \cite{wu2009global}. When $b = 3$ and $\sigma(t, x)\equiv0$, system \eqref{1.1} becomes the weakly dissipative Degasperis-Procesi equation \cites{guo2011global,guo2009some,wuescher2009global,wuyin2008blow}. Moreover, if the second
component $\sigma(t, x)$ in \eqref{1.1} is assumed to be nonzero, the choices $b = 2$ and $b = 3$ in \eqref{1.1} give rise to the weakly dissipative two-component Camassa-Holm and Degasperis-Procesi equations, respectively \cites{hu2011global,wuyin2008blow}.

The dissipative system  \eqref{1.1}  was first studied by Lenells and Wunsch \cite{lenells2013weakly}, in which the authors shown that the pair $(u,\sigma)$ is a solution to the weakly dissipative system \eqref{1.1} if and only if the pair $(v,\rho)$ defined by
\begin{equation}\label{1.2}
\begin{split}
u(t,x)=e^{-\lambda t} v\left(\frac{1-e^{-\lambda t}}{\lambda},x\right),\quad \sigma(t,x)=e^{-\lambda t} \rho\left(\frac{1-e^{-\lambda t}}{\lambda},x\right)
\end{split}
\end{equation}
is a solution to the associated non-dissipative system. It is shown that the similar result also holds for the Novikov equation with cubic nonlinearity \cite{lenells2013weakly}. In terms of the exponentially time-dependent scaling \eqref{1.2}, the authors in \eqref{1.1} obtained the global existence result for the damped system \eqref{1.1} in Sobolev spaces. Since the existence of solutions to \eqref{1.1} is based on the transformation \eqref{1.2} and the known results for the corresponding non-dissipative equations, the proof depends on the sign condition on initial momentum, e.g. $m(0,x)\geq 0$,  the proper blow-up criteria at hand as well as the method of characteristics.

The \emph{main contribution} of this paper is to prove the global Hadamard well-posedness of strong solutions to \eqref{1.1} in  larger Besov spaces, and analyse the influence of dissipative parameters $\lambda >0$ in the singular formulation of solutions. To this end, instead of the transformation \eqref{1.2}, we  take the following change of variables:
\begin{equation}\label{1.3}
\begin{split}
\widetilde{m}(t,x)=e^{\lambda t}m(t,x),\quad \widetilde{\sigma}(t,x)=e^{\lambda t}\sigma(t,x),
\end{split}
\end{equation}
then the Cauchy problem \eqref{1.1} can be reformulated as
\begin{equation}\label{1.4}
\begin{cases}
 \widetilde{m}_t+e^{-2\lambda t}\widetilde{u}\widetilde{m}_x+be^{-2\lambda t}\widetilde{u}_x\widetilde{m} +\kappa e^{-2\lambda t}\widetilde{\sigma}\widetilde{\sigma}_x=0,& t>0,~x\in \mathbb{K},\\
  \widetilde{\sigma}_t+e^{-2\lambda t}(\widetilde{u}\widetilde{\sigma})_x=0,& t>0,~x\in \mathbb{K},\\
 \widetilde{m}(0,x)=m_0(x),\quad\widetilde{\sigma}(0,x)=\sigma(x),&  x\in \mathbb{K}.
\end{cases}
\end{equation}

The transformed system \eqref{1.4} can be regarded as a damped two-component $b$-family equations with variable coefficients $e^{-2\lambda t}$. At this stage, it is interesting to mention that such a type of equation, called the  nonisospectral Camassa-Holm equations, has recently been derived by Chang et al. \cites{Chang2014,Chang2018}, which has the common characteristic that the parameters in the equations are no longer real numbers but time-dependent functions. Especially, in \cite{zhang2019}, we established a global well-posedness result for the system derived in \cite{Chang2018} with suitable assumptions on the time-dependent parameters. The method used in the present paper is inspired by our previous work \cite{zhang2019}, where the key idea involved in our approach is motivated by the classical ODE theory. For example, one considers the initial value problem
$\frac{df(t)}{dt}+\lambda f(t) =g(t)$, $f(0)=f_0 $. If we take the change of variable $\widetilde{f}(t)=e^{\lambda t}f(t)$, then the equation reduces to $\frac{d\widetilde{f}(t)}{dt}=e^{\lambda t}g(t)$, and the solution $\widetilde{f}(t)$ can be obtained just by integrating over $[0,t]$, for all $t>0$. However, when we apply the transformation to the system  \eqref{1.4}, the quadratic nonlinear terms and the coupling nature of the system make the solvability  of solutions to be more difficult, and hence some new ideas have to be explored.

To make our result more general, in the following we shall consider the following system:
\begin{equation}\label{1.5}
\begin{cases}
 m_t+\alpha(t)um_x+\beta(t)u_xm +\gamma(t)\sigma\sigma_x=0,& t>0,~x\in \mathbb{K},\\
 \sigma_t+\xi(t)(u\sigma)_x=0,& t>0,~x\in \mathbb{K},\\
 m(0,x)=m_0(x),\quad
 \sigma(0,x)=\sigma(x),&  x\in \mathbb{K},
\end{cases}
\end{equation}
where $\alpha,\beta,\gamma$ and $\xi$ are time-dependent parameters satisfying integrable conditions.

Set $p(x)=\frac{1}{2}e^{-|x|}$, we have
$$
(1-\partial_x^2)^{-1}f=p*f,\quad\textrm{for all}~~ f\in L^2(\mathbb{R}).
$$
Then the system  \eqref{1.5} can be rewritten as
\begin{equation}\label{1.6}
\begin{cases}
 u_t+\alpha(t) uu_x=-\partial_xp*\left(\frac{\beta(t)}{2} u^2+\frac{\gamma(t)}{2} \sigma^2+\frac{\beta(t)-3\alpha(t)}{2}u_x^2\right),& t>0,~x\in \mathbb{K},\\
 \sigma_t+\xi(t)u\sigma_x=-\xi(t)\sigma u_x,& t>0,~x\in \mathbb{K},\\
 u(0,x)=u_0(x),\quad
 \sigma(0,x)=\sigma_0(x),&  x\in \mathbb{K}.
\end{cases}
\end{equation}

The main result of the global well-posedness for the Cauchy problem  \eqref{1.6} (or \eqref{1.5}) in Besov spaces is enunciated by the following theorem.
\begin{theorem} \label{theorem1}
Let $s>\max\{1+\frac{1}{p},\frac{3}{2}\}$ and $p,r\in [0,\infty]$. Assume that $(u_0,\rho_0)\in B_{p,r}^s(\mathbb{K})\times B_{p,r}^{s-1}(\mathbb{K})$, and the time-dependent parameters $\alpha,\beta,\gamma,\xi\in L^1([0,\infty);\mathbb{R})$ such that
\begin{equation*}
\begin{split}
\int_0^\infty(|\alpha(t')|+|\beta(t')|+|\gamma(t')|+|\xi(t')|) dt'\leq \frac{\ln2}{6C^2 h\left(\|u_0\|_{B^{s}_{p,r}}+\|\sigma_0\|_{B^{s-1}_{p,r}}\right)},
\end{split}
\end{equation*}
where
\begin{equation*}
\begin{split}
h(x)\overset{\text{def}}{=} & e^{ 2C^2x\int_0^{\infty}(|\alpha(t')|+|\xi(t')|)dt'}\\
&\times\left(x+ 4C^2x^2 \int_0^{\infty}(|\alpha(t')|+|\beta(t')|+|\gamma(t')|+|\xi(t')|)dt'\right).
\end{split}
\end{equation*}
Then for any $ T> 0$, the system \eqref{1.6} has a unique global strong solution
$$
(u,\sigma)\in
E_{p,r}^s(T)\overset{\text{def}}{=} C([0,T];B_{p,r}^{s}(\mathbb{K})\times B_{p,r}^{s-1}(\mathbb{K}))\bigcap C^1([0,T];B_{p,r}^{s-1}(\mathbb{K})\times B_{p,r}^{s-2}(\mathbb{K})).
$$
Moreover, the data-to-solution map $(u_0,\sigma_0)\mapsto (u,\sigma)$ is continuous from a neighborhood of $(u_0,\sigma_0)$  in $B_{p,r}^s(\mathbb{K})\times B_{p,r}^{s-1}(\mathbb{K})$ to
 $E_{p,r}^{s'}(T),$
for $s'<s$ when $r = \infty$ and $s'=s$ whereas $r < \infty$.
\end{theorem}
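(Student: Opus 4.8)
The plan is to upgrade a local solution to a global one by a continuation argument driven by an a priori bound, rather than by any shape condition on the data. First I would establish local well-posedness of \eqref{1.6} on a maximal interval $[0,T^*)$, with $(u,\sigma)\in E_{p,r}^{s}(T)$ for every $T<T^*$. Since \eqref{1.6} is a coupled system of transport equations (velocity fields $\alpha(t)u$ and $\xi(t)u$, with nonlocal source terms that are one derivative smoother than $m$ and $\sigma$), this is done by the usual Littlewood--Paley/transport scheme: solve the linearized transport equations iteratively, obtain uniform bounds in $E_{p,r}^{s}(T)$ from the Besov transport estimates, show the iterates are Cauchy in the weaker norm $C([0,T];B_{p,r}^{s-1}(\K)\times B_{p,r}^{s-2}(\K))$, and recover the top regularity by weak compactness. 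The hypotheses $s>\max\{1+\frac1p,\frac32\}$ are exactly what make $B_{p,r}^{s-1}(\K)$ an algebra continuously embedded in $L^\infty$, so that $u^2,\sigma^2,u_x^2,\sigma u_x$ are controlled in $B_{p,r}^{s-1}$ and $\partial_x p\ast(\cdot)$ gains one derivative.

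Second, I would record the a priori bound that is the heart of the argument. Write $E(t):=\|u(t)\|_{B_{p,r}^{s}}+\|\sigma(t)\|_{B_{p,r}^{s-1}}$. Applying the sharp (exponential) form of the Besov transport estimate to the two equations of \eqref{1.6} gives
\[
\|u(t)\|_{B_{p,r}^{s}}\le e^{C\int_0^t|\alpha|\,\|u_x\|_{B_{p,r}^{s-1}}}\Big(\|u_0\|_{B_{p,r}^{s}}+C\!\int_0^t\!\big[(|\alpha|+|\beta|)\|u\|_{B_{p,r}^{s}}^2+|\gamma|\|\sigma\|_{B_{p,r}^{s-1}}^2\big]\Big),
\]
\[
\|\sigma(t)\|_{B_{p,r}^{s-1}}\le e^{C\int_0^t|\xi|\,\|u_x\|_{B_{p,r}^{s-1}}}\Big(\|\sigma_0\|_{B_{p,r}^{s-1}}+C\!\int_0^t|\xi|\,\|u\|_{B_{p,r}^{s}}\|\sigma\|_{B_{p,r}^{s-1}}\Big).
\]
Using $\|u_x\|_{B_{p,r}^{s-1}}\le\|u\|_{B_{p,r}^{s}}\le E$, summing the two inequalities combines the prefactors into $e^{C\int_0^t(|\alpha|+|\xi|)E}$ and the integral terms into $C\int_0^t(|\alpha|+|\beta|+|\gamma|+|\xi|)E^2$. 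This is precisely the structure packaged by the modulus $h$: its exponential factor records the transport (velocity) growth carried by $|\alpha|+|\xi|$, and its quadratic-in-$x$ term records the integrated source nonlinearities carried by $|\alpha|+|\beta|+|\gamma|+|\xi|$.

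Third, I would close the bound by a bootstrap tied to the blow-up criterion, which for such transport systems states that $T^*<\infty$ forces $\limsup_{t\to T^*}E(t)=\infty$. Set $E_0=E(0)$ and $T_0=\sup\{t\in[0,T^*): E(\tau)\le 2E_0\text{ on }[0,t]\}$. On $[0,T_0]$ one replaces each occurrence of $E$ in the prefactors and integrands by $2E_0$, so the right-hand sides are dominated by $h(E_0)$ times the total mass $\int_0^\infty(|\alpha|+|\beta|+|\gamma|+|\xi|)$. The smallness hypothesis, in which this mass is $\le \frac{\ln2}{6C^2 h(E_0)}$, is calibrated exactly so that the bound self-improves to $E(t)\le\frac32 E_0<2E_0$ on $[0,T_0]$. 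By continuity $T_0$ cannot be interior to $[0,T^*)$, so $E\le 2E_0$ throughout; with the blow-up criterion this yields $T^*=\infty$, and the uniform bound promotes the local solution to a global one in $E_{p,r}^{s}(\infty)$.

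The step I expect to be hardest is making the two coupled Gronwall inequalities close into the \emph{single} modulus $h$ with the explicit constants $6C^2$ and $\ln2$ that make the bootstrap self-improve; this requires tracking carefully how the linear-in-$\sigma$ structure of the second equation feeds back into the first. A second technical point is the continuity of the data-to-solution map at top regularity: for $r<\infty$ it follows from the uniform bounds and interpolation, but for $r=\infty$ the space $B_{p,\infty}^{s}(\K)$ is non-separable, the flow is only continuous into the weaker space with $s'<s$, and establishing this requires a Bona--Smith-type regularization (mollify the data, pass to the limit using the uniform estimates, then interpolate).
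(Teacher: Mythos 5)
Your proposal is correct in outline, but it takes a genuinely different route from the paper. The paper never sets up a local theory with maximal existence time, blow-up criterion, or continuation argument: it constructs Friedrichs iterates $(u^{(n)},\sigma^{(n)})$ by solving \emph{linear} transport equations globally in time, proves the uniform bound $\sup_{n}\sup_{t\in[0,\infty)}\bigl(\|u^{(n)}(t)\|_{B^{s}_{p,r}}+\|\sigma^{(n)}(t)\|_{B^{s-1}_{p,r}}\bigr)\leq 2Ch(H_0)$ by induction on the iteration index $n$ (this is exactly where the smallness condition with $\ln 2/(6C^2h(H_0))$ enters, playing the role your bootstrap plays), and then proves the iterates are Cauchy on all of $[0,\infty)$ at once; since the Gronwall-type recursion for the differences carries the $L^1$-in-time weight $|\alpha|+|\beta|+|\gamma|+|\xi|$, the paper needs a new weighted iterative lemma (Lemma \ref{lem:3.1}) to sum the recursion globally. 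Your scheme (local well-posedness, a priori estimate on the true solution, bootstrap closed by a blow-up criterion) is more modular and standard, and your verification that the constants close (the exponential factor contributes at most $2^{1/3}$ and the quadratic term at most a fixed fraction of $E_0$, so $E$ self-improves strictly below $2E_0$) is sound, even if the precise value $\tfrac32E_0$ you quote is not what comes out. What the paper's route buys is precisely the avoidance of the two technical debts your route incurs: (i) a continuation/blow-up criterion for a system whose coefficients are merely $L^1$ in time, which is not off-the-shelf and is essentially re-proved in the paper only later, as Theorem \ref{theorem3}, and only in the Sobolev setting with $\beta=3\alpha$; and (ii) continuity of $t\mapsto\|u(t)\|_{B^{s}_{p,r}}$, which your step ``by continuity $T_0$ cannot be interior'' requires but which is delicate for $r=\infty$, where the solution is a priori only continuous with values in $B^{s'}_{p,r}$, $s'<s$, and the top norm is merely lower semicontinuous; the paper's induction on $n$ sidesteps this because the linear transport estimates hold directly on $[0,\infty)$ and no restart in time is ever performed. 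Both issues in your approach are fixable (e.g.\ run the bootstrap at the level of the smooth-in-time iterates, which is in effect what the paper does), so I would call this a different and workable proof strategy rather than a gap, with the caveat that the $r=\infty$ case needs the extra care just described. Your closing remarks on Hadamard continuity (Lipschitz stability in the weaker norm plus interpolation for $s'<s$, a Bona--Smith-type regularization at top norm) match the paper's Step 4 and its brief appeal to viscosity approximation.
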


\begin{remark}
\begin{itemize}[leftmargin=0.62cm]
\item [1)]
Observing that the function
$$
f(x)\overset{\text{def}}{=}  6C^2 x e^{ 2C^2(\|u_0\|_{B^{s}_{p,r}}+\|\sigma_0\|_{B^{s-1}_{p,r}})x}\left(\|u_0\|_{B^{s}_{p,r}}+\|\sigma_0\|_{B^{s-1}_{p,r}}+ 4C^2(\|u_0\|_{B^{s}_{p,r}}+\|\sigma_0\|_{B^{s-1}_{p,r}})^2 x\right)
$$
is an increasing and continuous function for all  $x\in [0,\infty)$ with $f(0)=0$. The integrability condition provided in Theorem \ref{theorem1}  holds true as long as the $L^1$-integral $\|\alpha\|_{L^\infty}+\|\beta\|_{L^1}+\|\gamma\|_{L^1}+\|\xi\|_{L^1}$ is sufficiently small.

\item [2)]
Theorem \ref{theorem1} obtain a global existence in Besov spaces without using blow-up criteria and shape condition of the initial data, which improves the results in \cites{20,constantin1998global,constantin1998well, liu2006global, liuyin2006global}.
   \end{itemize}
  \end{remark}

Let us now come back to the Cauchy problem for the system \eqref{1.4}. If we take
$$
\alpha(t)=e^{-2\lambda t},~~\beta(t)=be^{-2\lambda t},~~\gamma(t)=\kappa e^{-2\lambda t},~~\xi(t)=e^{-2\lambda t},
$$
then we immediately conclude from Theorem \ref{theorem1} the following global existence theorem for the damped two-component $b$-family system \eqref{1.1}.
\begin{theorem} \label{theorem2}
Let $s>\max\{1+\frac{1}{p},\frac{3}{2}\}$ and $p,r\in [0,\infty]$. Assume that $(u_0,\rho_0)\in B_{p,r}^s\times B_{p,r}^{s-1}$, and dissipative constant $\lambda>0$ is sufficiently large such that
\begin{equation*}
\begin{split}
&e^{ \frac{2C^2}{\lambda}(\|u_0\|_{B^{s}_{p,r}}+\|\sigma_0\|_{B^{s-1}_{p,r}})}\left(\|u_0\|_{B^{s}_{p,r}}+\|\sigma_0\|_{B^{s-1}_{p,r}}+ \frac{2C^2 (2+|b|+|\kappa|)}{\lambda}(\|u_0\|_{B^{s}_{p,r}}+\|\sigma_0\|_{B^{s-1}_{p,r}})^2\right)\\
&\quad  \leq \frac{\lambda\ln2}{3C^2 (2+|b|+|\kappa|)}.
\end{split}
\end{equation*}
Then the Cauchy problem \eqref{1.4} has a unique global strong solution $(u,\sigma)\in E_{p,r}^{s}(\infty)$.
\end{theorem}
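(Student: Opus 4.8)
The plan is to obtain Theorem \ref{theorem2} as an immediate corollary of Theorem \ref{theorem1}, by recognizing the damped system \eqref{1.4} as the particular instance of the generalized two-component $b$-family system \eqref{1.5} corresponding to the choice
\[
\alpha(t)=e^{-2\lambda t},\quad \beta(t)=be^{-2\lambda t},\quad \gamma(t)=\kappa e^{-2\lambda t},\quad \xi(t)=e^{-2\lambda t},
\]
and then verifying that, under this substitution, the abstract integrability hypothesis of Theorem \ref{theorem1} collapses exactly into the stated condition on $\lambda$. No genuinely new analytic machinery is required; the argument reduces to inserting elementary explicit integrals into the criterion already proved.

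First I would confirm the $L^1$ membership demanded by Theorem \ref{theorem1}. Since $\lambda>0$ and each parameter is a constant multiple of $e^{-2\lambda t}$, we have
\[
\int_0^\infty e^{-2\lambda t'}\dd t'=\frac{1}{2\lambda}<\infty,
\]
so $\alpha,\beta,\gamma,\xi\in L^1([0,\infty);\mathbb{R})$. The two aggregate integrals entering the hypothesis are then immediate:
\[
\int_0^\infty(|\alpha(t')|+|\xi(t')|)\dd t'=\frac{1}{\lambda},\qquad \int_0^\infty(|\alpha(t')|+|\beta(t')|+|\gamma(t')|+|\xi(t')|)\dd t'=\frac{2+|b|+|\kappa|}{2\lambda}.
\]

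Next I would substitute these into the modulus of continuity $h$. Writing $x\overset{\text{def}}{=}\|u_0\|_{B^s_{p,r}}+\|\sigma_0\|_{B^{s-1}_{p,r}}$ for brevity, the definition of $h$ reduces to
\[
h(x)=e^{\frac{2C^2x}{\lambda}}\left(x+\frac{2C^2(2+|b|+|\kappa|)}{\lambda}\,x^2\right),
\]
which is precisely the left-hand side appearing in Theorem \ref{theorem2}. The integrability condition of Theorem \ref{theorem1}, namely $\int_0^\infty(|\alpha|+|\beta|+|\gamma|+|\xi|)\dd t'\le \frac{\ln2}{6C^2h(x)}$, now reads $\frac{2+|b|+|\kappa|}{2\lambda}\le\frac{\ln2}{6C^2h(x)}$, and upon clearing denominators this is exactly the inequality
\[
h(x)\le\frac{\lambda\ln2}{3C^2(2+|b|+|\kappa|)}
\]
asserted in Theorem \ref{theorem2}. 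Thus all hypotheses of Theorem \ref{theorem1} hold verbatim, and its conclusion furnishes a unique global strong solution $(u,\sigma)\in E^s_{p,r}(\infty)$ to \eqref{1.4}, completing the argument.

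Because the proof is a verification rather than a construction, I do not expect a substantive obstacle; the only point demanding care is the arithmetic bookkeeping that carries the constant $\tfrac{\ln2}{6C^2}$ of Theorem \ref{theorem1} into the constant $\tfrac{\lambda\ln2}{3C^2(2+|b|+|\kappa|)}$ of Theorem \ref{theorem2} after inserting the value $\tfrac{2+|b|+|\kappa|}{2\lambda}$. For completeness I would also record why the hypothesis is satisfiable: for fixed initial data, as $\lambda\to\infty$ the left-hand side $h(x)$ tends to $x$ while the right-hand side grows linearly in $\lambda$, so the displayed inequality holds for all sufficiently large $\lambda$, in agreement with the phrasing of the theorem and with the first remark following Theorem \ref{theorem1}.
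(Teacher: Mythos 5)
Your proposal is correct and follows exactly the paper's route: the paper likewise obtains Theorem \ref{theorem2} by substituting $\alpha(t)=e^{-2\lambda t}$, $\beta(t)=be^{-2\lambda t}$, $\gamma(t)=\kappa e^{-2\lambda t}$, $\xi(t)=e^{-2\lambda t}$ into Theorem \ref{theorem1} and noting that the integrability hypothesis reduces to the stated condition on $\lambda$. Your arithmetic (the integrals $\frac{1}{\lambda}$ and $\frac{2+|b|+|\kappa|}{2\lambda}$, and the rearrangement into $h(x)\le\frac{\lambda\ln 2}{3C^2(2+|b|+|\kappa|)}$) checks out and matches the constants in the statement.
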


\begin{remark}
As an example, we provide a sufficient condition which satisfies the inequality in Theorem \ref{theorem2}:
\begin{equation*}
\begin{split}
\lambda\geq\frac{8C^2 (2+|b|+|\kappa|)}{\ln 2}\left(\|u_0\|_{B^{s}_{p,r}}+\|\sigma_0\|_{B^{s-1}_{p,r}}\right).
\end{split}
\end{equation*}
Observing that, for any given $\lambda>0$, Theorem \ref{theorem2} with above inequality actually gives the small data global-in-time result for the system \eqref{1.4} in Besov spaces, where the initial data satisfies $\|u_0\|_{B^{s}_{p,r}}+\|\sigma_0\|_{B^{s-1}_{p,r}}\leq\frac{\lambda\ln 2}{8C^2 (2+|b|+|\kappa|)}$. More importantly, Theorem \ref{theorem2} casted off  the shape conditions of initial data, cf. \cites{wuescher2009global,wu2009global,wuyin2008blow}.
\end{remark}

Our second goal in the present paper is to investigate the finite time blow-up regime for the system \eqref{1.5} (when $\beta\equiv3\alpha$) in Sobolev spaces, which in some sense tell us how the time-dependent parameters affect the singularity formation. The first blow-up criteria is   given in the following theorem.
\begin{theorem} \label{theorem3}
Assume that $s> \frac{3}{2} $, $\beta(t)=3\alpha(t)$  in \eqref{1.6}, and the parameters $\alpha,\gamma,\xi \in L^2_{loc}([0,\infty);\mathbb{R})$. Let $T^*>0$ be the maximum existence time of solution $(u,\sigma)$ with respect to the initial data $(u_0,\sigma_0)\in H^s \times H^{s-1}$. If $T^*<\infty$, then
$$
T^*<\infty\Rightarrow \int_0^{T^*}  (|\alpha(t')|+|\gamma(t')|+|\xi(t')|)  \|\partial_x u(t')\|_{L^\infty}dt'=\infty,
$$
and the blow-up time $T^*$ is estimated as follows
\begin{equation*}
\begin{split}
T^*\geq  \sup \left\{t>0;~~\|\alpha\|_{L^1(0,t)}+\|\gamma \|_{L^1(0,t)} +\|\xi\|_{L^1(0,t)}<  \frac{1}{C(\|u_0\|_{H^{s}}+ \| \sigma_0\|_{H^{s-1}})}\right\}.
\end{split}
\end{equation*}
In addition, if the parameters $\alpha,\gamma,\xi  \in L^1 ([0,\infty);\mathbb{R})$ satisfies
$$
\|\alpha\|_{L^1(0,\infty)}+\|\gamma \|_{L^1(0,\infty)} +\|\xi\|_{L^1(0,\infty)}= \frac{1}{2C(\|u_0\|_{H^{s}}+ \| \sigma_0\|_{H^{s-1}})} ,
$$
then $(u,\sigma)$ is a global strong solution.
\end{theorem}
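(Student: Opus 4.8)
The plan is to reduce all three assertions to the study of a single scalar quantity, the combined energy $E(t):=\|u(t)\|_{H^s}+\|\sigma(t)\|_{H^{s-1}}$, governed by one a priori differential inequality, and then to run two different ODE arguments on it. First I would exploit the hypothesis $\beta=3\alpha$: it makes the coefficient $\tfrac{\beta-3\alpha}{2}$ of $u_x^2$ in the nonlocal term of \eqref{1.6} vanish, so the system collapses to
\[
u_t+\alpha u u_x=-\partial_x p*\Big(\tfrac{3\alpha}{2}u^2+\tfrac{\gamma}{2}\sigma^2\Big),\qquad \sigma_t+\xi u\sigma_x=-\xi\sigma u_x .
\]
The absence of $u_x^2$ is the Degasperis--Procesi-type feature that makes the pointwise convolution estimates used below sharp.

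Next I would establish the core energy estimate. Applying $\Lambda^s:=(1-\partial_x^2)^{s/2}$ to the first equation and $\Lambda^{s-1}$ to the second, pairing in $L^2$ with $\Lambda^s u$ and $\Lambda^{s-1}\sigma$ and summing, the transport contributions are treated by the Kato--Ponce commutator estimate together with the identity $\int u\,\partial_x(\Lambda^s u)\,\Lambda^s u\,dx=-\tfrac12\int u_x(\Lambda^s u)^2\,dx$, each bounded by $\|u_x\|_{L^\infty}E^2$; the coupling term $\xi\sigma u_x$ and the two nonlocal terms are controlled by Moser-type product estimates and the smoothing bound $\|\partial_x p*f\|_{H^s}\lesssim\|f\|_{H^{s-1}}$, which yield contributions of size $(\|u\|_{L^\infty}+\|\sigma\|_{L^\infty})E^2$. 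Collecting the terms gives
\[
\frac{d}{dt}E\le C\,(|\alpha|+|\gamma|+|\xi|)\,\big(\|u\|_{L^\infty}+\|\sigma\|_{L^\infty}+\|u_x\|_{L^\infty}\big)\,E .
\]

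For the blow-up criterion I would argue by contradiction, assuming $T^*<\infty$ while the critical integral is finite, and then show that every norm on the right-hand side above stays bounded on $[0,T^*)$, so that Gronwall keeps $E$ bounded and contradicts maximality. Controlling $\|\sigma\|_{L^2}$ and $\|\sigma\|_{L^\infty}$ is straightforward: the second equation gives $\tfrac{d}{dt}\|\sigma\|_{L^2}^2=-\int\xi u_x\sigma^2\,dx$ and a characteristic representation $\sigma(t,q)=\sigma_0\exp(-\int_0^t\xi u_x\,ds)$, so both are dominated by $\exp(\int_0^{T^*}|\xi|\|u_x\|_{L^\infty}\,ds)<\infty$. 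The delicate point --- and the main obstacle --- is to bound $\|u\|_{L^\infty}$ \emph{without} re-introducing $\|u\|_{H^s}$; here I would use the sharp pointwise bound $\|\partial_x p*(u^2)\|_{L^\infty}\le\tfrac12\|u\|_{L^2}^2$ along the characteristics of the first equation, combined with the (perturbed) Degasperis--Procesi conservation $\int m\,(4-\partial_x^2)^{-1}u\,dx\simeq\|u\|_{L^2}^2$, whose time derivative is driven only by a $\gamma$-term controlled by the already-bounded $\|\sigma\|_{L^2}$. This closes a coupled bootstrap bounding $\|u\|_{L^\infty}$, and since $|\alpha|+|\gamma|+|\xi|\in L^1(0,T^*)$, Gronwall finishes the criterion.

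Finally, the lower bound on $T^*$ and the global-existence statement need none of this low-order machinery. Because $s>\tfrac32$, Sobolev embedding gives $\|u\|_{L^\infty}+\|\sigma\|_{L^\infty}+\|u_x\|_{L^\infty}\le C E$, so the core estimate becomes the Riccati inequality $\tfrac{dE}{dt}\le C(|\alpha|+|\gamma|+|\xi|)E^2$. Integrating $-\tfrac{d}{dt}E^{-1}\le C(|\alpha|+|\gamma|+|\xi|)$ produces $E(t)\le E(0)\big(1-C E(0)\int_0^t(|\alpha|+|\gamma|+|\xi|)\,ds\big)^{-1}$, valid as long as the denominator is positive; with $E(0)=\|u_0\|_{H^s}+\|\sigma_0\|_{H^{s-1}}$ this is exactly the claimed lower bound for $T^*$. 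Under the stated equality the total integral equals $\tfrac{1}{2C E(0)}$, so the denominator never drops below $\tfrac12$, giving $E(t)\le 2E(0)$ for all $t$ and hence, by the continuation criterion, a global solution.
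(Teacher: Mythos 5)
Your proposal follows the same global strategy as the paper in all but one component: (i) an $H^s\times H^{s-1}$ energy inequality whose prefactor involves only $\|u\|_{L^\infty}$, $\|u_x\|_{L^\infty}$, $\|\sigma\|_{L^\infty}$; (ii) bounds for $\|\sigma\|_{L^2}$ and $\|\sigma\|_{L^\infty}$ along the characteristics of $\xi u$; (iii) bounds for $\|u\|_{L^2}$ and $\|u\|_{L^\infty}$ via the Degasperis--Procesi-type functional $(m,(4-\partial_x^2)^{-1}u)_{L^2}$, which is comparable to $\|u\|_{L^2}^2$ and whose time derivative is driven only by the $\gamma\sigma^2$ term --- this is exactly the paper's Lemma 4.1, and you correctly identified it as the crux; and (iv) the Riccati inequality $E'\le C(|\alpha|+|\gamma|+|\xi|)E^2$ obtained by Sobolev embedding, which yields both the lower bound on $T^*$ and the global statement under the equality hypothesis. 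Where you genuinely diverge is item (i): the paper never performs a direct commutator energy estimate, but instead runs a five-step induction on the regularity index ($\tfrac32<s<2$, $2\le s<\tfrac52$, $2<s<3$, $s=k\in\mathbb{N}$ with $k\ge 3$, $k<s<k+1$), applying the transport-equation a priori estimates (Lemmas 2.3 and 2.5) to $u$, $\sigma$ and the derivatives $\partial_x^j\sigma$ together with Moser-type estimates. Your one-shot argument, if it closes, is more economical than that induction.

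However, as literally written your energy step has a gap precisely where the paper's induction is doing its work. For the transport term $\xi u\sigma_x$ in the $\sigma$-equation, the standard Kato--Ponce commutator bound applied to $[\Lambda^{s-1},u]\sigma_x$ (with $\Lambda^{s-1}=(1-\partial_x^2)^{(s-1)/2}$) produces the term $\|u\|_{H^{s-1}}\|\sigma_x\|_{L^\infty}$, and $\|\sigma_x\|_{L^\infty}$ is controlled neither by the prefactor quantities $\|u\|_{L^\infty}+\|u_x\|_{L^\infty}+\|\sigma\|_{L^\infty}$ nor by the integral in the blow-up criterion; for $\tfrac32<s\le\tfrac52$ it is not even finite a priori, so your claim that each transport contribution is bounded by $\|u_x\|_{L^\infty}E^2$ fails for the second equation. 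To close the argument you need the refined commutator estimate that exploits the derivative structure,
\begin{equation*}
\|[\Lambda^{s-1},u]\partial_x\sigma\|_{L^2}\le C\left(\|u_x\|_{L^\infty}\|\sigma\|_{H^{s-1}}+\|u\|_{H^{s}}\|\sigma\|_{L^\infty}\right),\qquad s-1>0,
\end{equation*}
which follows from Bony's decomposition and is, in effect, what the paper's transport lemmas encode. With this estimate cited or proved, your route is correct and cleaner than the paper's; without it, the $\sigma$-equation step fails exactly in the low-regularity range the theorem is meant to cover. One further small point: in the final Gronwall step you should check where the hypothesis $\alpha,\gamma,\xi\in L^2_{loc}$ (rather than $L^1_{loc}$) enters; in the paper it is needed because the $L^\infty$-bound on $u$ carries the pointwise factors $|\alpha(t)|,|\gamma(t)|$, which get multiplied once more by $|\alpha|+|\gamma|+|\xi|$ inside the Gronwall integral.
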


Theorem \ref{theorem3} indicates that if the time-dependent parameters $\alpha,\gamma$ and $\xi$ are locally integrable functions, then the strong solutions to the system \eqref{1.6} can still blow-up in finite time. Moreover, it is seen from Theorem \ref{theorem3} that the blow-up of solutions only depends on the parameters $\alpha,\gamma,\xi$ as well as the slope of $u$, but not on the slope of $\sigma$. Similar blow-up criteria has been established for the non-isospectral peakon system \cites{zhang2020periodic,zhangqiaocmp}.

The next theorem provides a precise blow-up criteria for the strong solution with suitable conditions on parameters.
\begin{theorem} \label{theorem4}
Let $s> \frac{3}{2}$ and  $\beta(t)=3\alpha(t)$ in \eqref{1.6}. Assume that the parameters $\alpha,\gamma,\xi$ belong to $ L^2_{loc}([0,\infty);\mathbb{R})$ and satisfy the following sign conditions:
$$
 \xi(t)\geq 0,\quad \alpha(t)+\gamma(t)+\xi(t)\geq 0 ,\quad \forall t\in [0,\infty).
$$
If $(u,\sigma)$ solves the system \eqref{1.6} with initial data $(u_0,\sigma_0)\in H^s \times H^{s-1}$, then
$$
T^*<\infty \Leftrightarrow  \lim_{t\rightarrow T^*}\inf_{x\in \mathbb{K}} u_x(t,x)=-\infty.
$$
\end{theorem}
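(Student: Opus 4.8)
The plan is to establish the two implications separately, treating $T^\ast<\infty\Rightarrow\inf_x u_x\to-\infty$ as the substantive one. For the easy direction, I would note that $s>\tfrac32$ gives the embedding $H^s(\mathbb{K})\hookrightarrow C^1(\mathbb{K})$, so if $\inf_x u_x(t,x)\to-\infty$ as $t\uparrow T^\ast$ then $\|\partial_x u(t)\|_{L^\infty}\to\infty$; consequently the weighted integral $\int_0^{T^\ast}(|\alpha|+|\gamma|+|\xi|)\|\partial_x u\|_{L^\infty}\,dt$ diverges and the general criterion of Theorem \ref{theorem3} forces $T^\ast<\infty$. The whole content therefore lies in showing that, when $T^\ast<\infty$, the blow-up must be one-sided. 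I would argue by contradiction: assume there is a constant $K>0$ with $u_x(t,x)\ge -K$ for all $(t,x)\in[0,T^\ast)\times\mathbb{K}$, and aim to bound $\sup_x u_x$ on $[0,T^\ast)$. Since $\|\partial_x u\|_{L^\infty}=\max\{\sup_x u_x,\,-\inf_x u_x\}$, a uniform bound on $\sup_x u_x$ together with $u_x\ge -K$ would make $\int_0^{T^\ast}(|\alpha|+|\gamma|+|\xi|)\|\partial_x u\|_{L^\infty}\,dt$ finite, contradicting Theorem \ref{theorem3}.

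Before the Riccati step I would record the a priori $L^\infty$-bounds that the sign conditions and the lower bound $u_x\ge -K$ provide. Propagating $\sigma$ along its own characteristics $\dot q=\xi(t)u(t,q)$, the second equation of \eqref{1.6} gives $\tfrac{d}{dt}\sigma(t,q)=-\xi(t)u_x(t,q)\sigma(t,q)$; using $\xi\ge0$ and $u_x\ge-K$ one obtains $\|\sigma(t)\|_{L^\infty}\le\|\sigma_0\|_{L^\infty}\exp(K\int_0^t\xi)$, which is uniformly bounded on $[0,T^\ast)$ because $\xi\in L^2_{loc}\subset L^1([0,T^\ast))$; the same computation controls $\|\sigma(t)\|_{L^2}$. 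The bound on $\|u(t)\|_{L^\infty}$ is more delicate, since with $\beta=3\alpha$ the system is of Degasperis--Procesi type and the $H^1$ energy is not conserved. Here I would combine the transport estimate $\tfrac{d}{dt}u(t,q)=-\partial_x p\ast F$ (with $F=\tfrac{3\alpha}{2}u^2+\tfrac{\gamma}{2}\sigma^2$) with the local a priori estimate underlying Theorem \ref{theorem3}, which bounds $\|u(t)\|_{H^s}$, hence $\|u(t)\|_{L^\infty}$, by $\exp(C\int_0^t(|\alpha|+|\gamma|+|\xi|)\|\partial_x u\|_{L^\infty})$, so that under $u_x\ge-K$ the $L^\infty$-norm of $u$ is in turn controlled by the quantity $\int_0^t(|\alpha|+|\gamma|+|\xi|)\sup_x u_x$ that the next step must bound; closing this coupling is the crux.

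For the differential inequality I would differentiate the first equation of \eqref{1.6} in $x$ and use the identity $\partial_x^2 p\ast F=p\ast F-F$ to obtain, along the $u$-characteristics,
\[
\frac{d}{dt}u_x(t,q)=-\alpha(t)\,u_x^2+\tfrac{3}{2}\alpha(t)u^2+\tfrac{1}{2}\gamma(t)\sigma^2-p\ast F .
\]
Evaluating at a point where $M(t)=\sup_x u_x(t,\cdot)$ is attained (so that $u_{xx}=0$ there) yields a Riccati-type inequality $M'\le-\alpha M^2+R(t)$ with a remainder $R$ controlled by $|\alpha|\,\|u\|_{L^\infty}^2+|\gamma|\,\|\sigma\|_{L^\infty}^2$. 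The sign hypotheses enter precisely here: $\xi\ge0$ secures the $\sigma$-bound above, while $\alpha+\gamma+\xi\ge0$ is what lets me absorb the positive part of the Riccati source against the damping furnished by $-\alpha M^2$ (and by the pointwise lower bound $p\ast F\ge0$ when $F\ge0$), so that $M(t)$ stays bounded on $[0,T^\ast)$ by a Gronwall/comparison argument. Bounding $\sup_x u_x$ then contradicts the divergence forced by Theorem \ref{theorem3}, completing the proof.

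I expect the main obstacle to be twofold. First, $u$ and $\sigma$ are transported with the two different speeds $\alpha u$ and $\xi u$, so $\sigma$ is not comoving with the $u$-characteristics and the usual device of following a single characteristic and studying $u_x\pm\sigma$ fails; one must keep the two flows separate and reconcile them only at the level of $L^\infty$-norms. Second, since $\alpha$ is merely $L^2_{loc}$ and may change sign, the Riccati term $-\alpha M^2$ is not automatically dissipative, so the upper bound on $\sup_x u_x$ cannot be read off termwise and genuinely requires the combined sign condition $\alpha+\gamma+\xi\ge0$ together with the uniform $L^\infty$-control of $u$ and $\sigma$; making this absorption quantitative, without invoking any shape or sign condition on the initial data, is the delicate point of the argument.
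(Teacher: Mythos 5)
Your overall contradiction scheme for the hard direction (assume $u_x\ge -K$ on $[0,T^*)$, bound $\sup_x u_x$, conclude $\int_0^{T^*}(|\alpha|+|\gamma|+|\xi|)\|u_x\|_{L^\infty}\,dt<\infty$, and contradict Theorem \ref{theorem3}) is the same as the paper's, and your treatment of $\sigma$ along the flow of $\xi u$ matches the paper's characteristic estimates. But there is a genuine gap at exactly the point you yourself flag as ``the crux'': the uniform $L^\infty$-bound for $u$. You propose to extract it from the $H^s$ a priori estimate behind Theorem \ref{theorem3}, which is of the form $\|u(t)\|_{H^s}\lesssim \exp\bigl(C\int_0^t(|\alpha|+|\gamma|+|\xi|)\|u_x\|_{L^\infty}\,dt'\bigr)$; since $\|u_x\|_{L^\infty}$ contains $\sup_x u_x$, the very quantity you are trying to bound, this is circular, and you offer no way to close the loop. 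The paper breaks the circle with a structural ingredient your proposal never uses: because $\beta=3\alpha$, the system admits the Degasperis--Procesi-type almost conserved functional $(m,\chi)_{L^2}$ with $\chi=(4-\partial_x^2)^{-1}u$, whose time derivative reduces to the single term $(\gamma\sigma^2,\chi_x)_{L^2}$ (Lemma 4.1). Since $(m,\chi)_{L^2}$ is comparable to $\|u\|_{L^2}^2$, this yields a bound on $\|u(t)\|_{L^2}$ that requires only the $\sigma$-bounds (hence only $\xi\ge0$ and $u_x\ge-K$), and then $\|u(t)\|_{L^\infty}$ follows from the transport form $\frac{d}{dt}u(t,\psi)=-\partial_xp*F$ together with $\|\partial_xp*F\|_{L^\infty}\le\|\partial_xp\|_{L^\infty}\|F\|_{L^1}\lesssim |\alpha|\,\|u\|_{L^2}^2+|\gamma|\,\|\sigma\|_{L^2}^2$, with no reference to $u_x$ at all. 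Without this (or an equivalent substitute), your Riccati step has no bounded source $R(t)$ to work with.

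Two further points. Your claim that $\alpha+\gamma+\xi\ge0$ lets you ``absorb'' the term $-\alpha M^2$ is not a real mechanism: $\alpha$ alone may be negative, in which case $-\alpha M^2$ is anti-damping, and no comparison argument is indicated. In the paper the combined sign condition plays a different and much more modest role: it enters only at the very end, to convert the two-sided control of $u_x$ into finiteness of $\int_0^{T^*}(\alpha+\gamma+\xi)\|u_x\|_{L^\infty}\,dt$, while the $u_x$-equation itself is handled by discarding the nonlocal and quadratic terms and bounding what remains by the already-established $L^\infty$ norms of $u$ and $\sigma$. Finally, your ``easy'' direction is logically invalid: Theorem \ref{theorem3} states that $T^*<\infty$ implies divergence of the weighted integral, whereas you invoke its converse (divergence implies $T^*<\infty$), which is not asserted anywhere; moreover, blow-up of $\|u_x\|_{L^\infty}$ need not force divergence of the weighted integral, since the weights $|\alpha|+|\gamma|+|\xi|$ may vanish or decay. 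The correct argument is the paper's: $s-1>\tfrac12$ gives $H^{s-1}\hookrightarrow L^\infty$, so $\inf_x u_x\to-\infty$ forces $\|u(t)\|_{H^s}\to\infty$, which is incompatible with continuation of the $H^s$ solution beyond that time.
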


Theorem \ref{theorem4} demonstrate that the blow-up in the first component $u$ must occur before that in the second component $\sigma$ in finite time, which is similar to the well-known Camassa-Holm equation. However, the present result strongly depends on the integrability and sign conditions for the time-dependent parameters $\alpha,\gamma$ and $\xi$. Note that Theorem \ref{theorem4} can be applied to the damped two-component $b-$family system \eqref{1.1} (or \eqref{1.4}).

The remaining of this paper is organized as follows. In Section 2, we introduce the Littlewood-Paley theory, and some well-known results of the transport theory in Besov spaces. The Section  3 is devoted to the proof of the  global Hadamard well-posedness for the damped system \eqref{1.6} in Besov spaces. In Section 4, we first prove a crucial lemma regarding the $L^2$- and $L^\infty$-bound for solution $(u,\sigma)$, and then give the proofs of the blow-up criteria in Theorem \ref{theorem3} and Theorem \ref{theorem4}.

\vskip3mm\noindent
\textbf{Notation.} Throughout the paper, since all spaces of functions 
  are over  $\mathbb{K}$, we will drop them in the notation of function spaces if there is no any specific to be clarified.

\section{Preliminaries}\label{sec:prelim}
In this section, we recall some well-known facts of the Littlewood-Paley decomposition theory and the linear transport theory in Besov spaces. We introduce the Besov spaces defined on $\mathbb{R}$, the periodic case   can be defined similarly.

\begin{lemma} [\cite{64}]\label{lem:Dyaunit}
 Denote by $\mathcal {C}$ the annulus of centre 0, short radius $3/4$ and long radius $8/3$. Then there exits two positive radial functions $\chi$ and $\varphi$ belonging respectively to $C_c^\infty(B(0,4/3))$ and $ C_c^\infty(\mathcal {C})$ such that
 $$
 \chi(\xi)+\sum_{q\geq0}\varphi(2^{-q}\xi)=1, \quad \forall \xi \in \mathbb{R},
 $$
\end{lemma}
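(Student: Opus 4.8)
The plan is to build everything from a single smooth, radial, nonincreasing cutoff, so that the required properties of both $\chi$ and $\varphi$ follow from one monotonicity property. First I would fix a function $\chi \in C_c^\infty(B(0,4/3))$ with $0 \le \chi \le 1$, with $\chi \equiv 1$ on the ball $B(0,3/4)$, and with $\chi(\xi)$ depending only on $|\xi|$ and nonincreasing in $|\xi|$. Such a $\chi$ is produced in the usual way: take the indicator of an intermediate ball and mollify it with a radial bump, or compose a one-dimensional smooth monotone transition function with the map $\xi \mapsto |\xi|$. The monotonicity in $|\xi|$ is the one feature I would be careful to retain, since it is exactly what forces the positivity of $\varphi$ below. (Here "positive" is read as nonnegative, the usual mild abuse in this setting, since $\varphi$ must vanish off the annulus.)

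Then I would define $\varphi(\xi) := \chi(\xi/2) - \chi(\xi)$, which is manifestly smooth and radial. For the support: since $\chi(\xi/2) = 1$ for $|\xi| \le 3/2$ and $\chi(\xi) = 1$ for $|\xi| \le 3/4$, the two terms cancel on $B(0,3/4)$; and since $\chi(\xi/2) = 0$ for $|\xi| \ge 8/3$ while $\chi(\xi)$ already vanishes for $|\xi| \ge 4/3$, both terms vanish for $|\xi| \ge 8/3$. Hence $\supp\varphi \subset \mathcal{C}$, so $\varphi \in C_c^\infty(\mathcal{C})$. Positivity is then immediate: because $|\xi/2| \le |\xi|$ and $\chi$ is nonincreasing in the radial variable, $\chi(\xi/2) \ge \chi(\xi)$, whence $\varphi \ge 0$.

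The identity is a telescoping computation. Writing $\varphi(2^{-q}\xi) = \chi(2^{-(q+1)}\xi) - \chi(2^{-q}\xi)$, the partial sums collapse to
$$
\sum_{q=0}^{N}\varphi(2^{-q}\xi) = \chi(2^{-(N+1)}\xi) - \chi(\xi).
$$
For each fixed $\xi$ the series on the left is in fact finite, since $\varphi(2^{-q}\xi) = 0$ as soon as $2^{-q}|\xi| < 3/4$; and on the right, $2^{-(N+1)}|\xi| \le 3/4$ for $N$ large, so $\chi(2^{-(N+1)}\xi) = 1 = \chi(0)$. Passing to the limit gives $\sum_{q\ge 0}\varphi(2^{-q}\xi) = 1 - \chi(\xi)$, which rearranges to the claimed partition of unity. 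The point $\xi = 0$ is handled separately and trivially, since every $\varphi(0) = 0$ and $\chi(0) = 1$.

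The only genuine work is the first step, namely constructing a radial $C^\infty$ cutoff that is simultaneously nonincreasing in $|\xi|$ and has the precise plateau and support required; everything after that is algebra and a telescoping limit. I do not anticipate any real obstacle, since this is the standard dyadic partition of unity, and the radii $3/4$, $4/3$, $8/3$ are chosen precisely so that dilating $\chi$ by the factor $2$ produces an annulus with exactly the stated inner and outer radii.
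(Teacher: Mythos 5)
Your proof is correct; note, however, that there is no proof in the paper to compare it against, since Lemma \ref{lem:Dyaunit} is quoted as background from the monograph \cite{64} and used as a black box. Every step of your telescoping construction checks out: a radial, radially nonincreasing cutoff $\chi$ with plateau $B(0,3/4)$ and support in $B(0,4/3)$ exists (smooth at the origin because its radial profile is constant near $r=0$); the function $\varphi(\xi)=\chi(\xi/2)-\chi(\xi)$ is then smooth, radial, supported in $\{3/4\le|\xi|\le 8/3\}=\mathcal{C}$, and nonnegative precisely because of the monotonicity you imposed; and for each fixed $\xi\ne 0$ the sum $\sum_{q\ge0}\varphi(2^{-q}\xi)$ has finitely many nonzero terms, with partial sums collapsing to $\chi(2^{-(N+1)}\xi)-\chi(\xi)\to 1-\chi(\xi)$. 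Your two side remarks are also the right ones: ``positive'' must indeed be read as nonnegative, and under the closed-annulus convention of \cite{64} your support computation (which allows $\supp\varphi$ to touch the sphere $|\xi|=3/4$) is exactly what is required; for an open annulus one would simply enlarge the plateau of $\chi$ slightly beyond radius $3/4$. For comparison, the construction given in \cite{64} takes a different, equally standard route: one starts from a radial bump $\theta$ supported in $\mathcal{C}$ and positive on a subannulus whose dyadic dilates cover $\mathbb{R}\setminus\{0\}$, and normalizes, setting $\varphi=\theta/\sum_{j\in\mathbb{Z}}\theta(2^{-j}\cdot)$ and $\chi=1-\sum_{q\ge0}\varphi(2^{-q}\cdot)$. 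There, positivity of $\varphi$ is automatic (no monotone cutoff is needed) but one must verify that the denominator is locally finite, smooth, and bounded below, and that $\chi$ has the stated support; in your construction those verifications are trivial and the only work is building the monotone cutoff. Both routes also deliver the support-disjointness $\supp\varphi(2^{-q}\cdot)\cap\supp\varphi(2^{-q'}\cdot)=\emptyset$ for $|q-q'|\ge 2$, which is what underlies the almost-orthogonality $\Delta_i\Delta_j=0$ invoked before \eqref{3.14}, so nothing later in the paper is sensitive to which construction is used.
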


For the functions $\chi$ an $\varphi$ satisfying Lemma \ref{lem:Dyaunit}, we write $h=\mathcal {F}^{-1}\varphi$ and $\widetilde{h}=\mathcal {F}^{-1} \chi$, where $\mathcal {F}^{-1}u$ denotes the inverse Fourier transformation of $u$. The nonhomogeneous dyadic blocks $\Delta_j$ are defined by
$$
\Delta_qu\overset{\text{def}}{=}0,\quad q\leq-2;\quad \Delta_{-1}u\overset{\text{def}}{=} \chi(D)u=\int_{\mathbb{R}} u(x-y)\widetilde{h}(y)dy,\quad  q=-1;
$$
$$
\Delta_{q}u\overset{\text{def}}{=} \varphi(2^{-j}D )u=\int_{\mathbb{R}^d} u(x-y)h (2^{-j}y)dy,\quad  q\geq 0.
$$
The nonhomogeneous Littlewood-Paley  decomposition of $u\in \mathscr{S}' $ is given by
$$
u=\sum_{q\geq-1} \Delta_{q} u.
$$
We also define the high-frequence cut-off operator as
$$
S_qu\overset{\text{def}}{=}\sum\limits_{p\leq q-1 }\Delta _p u, \quad \forall q\in\mathbb{N}.
$$

\begin{definition} [\cite{64}]\label{def:PBesov}
For any $s\in \mathbb{R}$ and $p,r\in [1,\infty] $, the $d$-dimension nonhomogeneous Besov space $B_{p,r}^s  $ is defined by
$$
B_{p,r}^s \overset{\text{def}}{=}\left\{u\in \mathscr{S}' ; ~\|u\|_{B_{p,r}^s}=\|(2^{qs}\| \Delta _qu\|_{L^p})_{l\geq -1}\|_{l^r}  <\infty\right\},
$$
If $s=\infty$,  $B_{p,r}^\infty \overset{\text{def}}{=}\bigcap_{s\in\mathbb{R}}B_{p,r}^s $.
\end{definition}

Now let us state some useful results in the transport equation theory in Besov spaces,  which are crucial to the proofs of our main theorems.
\begin{lemma}[\cite{64}]\label{lem:priorieti}
Assume that $p,r\in [1,\infty]$ and  $s > -\frac{d}{p}$. Let $v$ be a vector field such that
 $\nabla v$ belongs to $L^1_T(B_{p,r}^{s-1})$ if $s>1+\frac{d}{p}$ or to $ L^1_T(B_{p,r}^{\frac{d}{p}}\cap L^\infty)$ otherwise. Suppose also that $f_0\in B_{p,r}^s$, $F\in L^1_T(B_{p,r}^s)$ and that  $f \in L^\infty_T(B_{p,r}^s)\cap C_T(\mathcal {S}')$ solves the linear transport equation
 \begin{eqnarray*}\label{eq:transeq}
(T)\quad\begin{cases}
\partial_tf+v\cdot\nabla f=F,\\
f|_{t=0}=f_0.
\end{cases}
\end{eqnarray*}
Then there is a constant $C$ depending only on $s,p$ and $r$ such that the following statements hold:

1) If $r=1$ or $s\neq 1+\frac{d}{p}$,  then
\begin{equation*}
\begin{split}
\|f(t)\|_{B_{p,r}^s}\leq  \|f_0\|_{B_{p,r}^s}+\int_0^t \|F(\tau)\|_{B_{p,r}^s}d\tau+C\int_0^t V'(\tau)\|f(\tau)\|_{B_{p,r}^s}d \tau,
\end{split}
\end{equation*}
or
\begin{equation}\label{2.1}
\begin{split}
\|f(t)\|_{B_{p,r}^s}\leq  e^{CV(t)}\left(\|f_0\|_{B_{p,r}^s}+\int_0^t e^{-CV(\tau)}\|F(\tau)\|_{B_{p,r}^s}d\tau\right)
\end{split}
\end{equation}
hold, where $ V(t)= \int_0^t\|\nabla v (\tau)\|_{B_{p,r}^{\frac{d}{p}}\bigcap L^\infty}d\tau$ if $s<1+\frac{d}{p}$ and $
V(t)=\int_0^t\|\nabla v(\tau)\|_{B_{p,r}^{s-1}}d\tau$ otherwise.

2) If $s\leq\frac{d}{p}$ and, in addition, $\nabla f_0\in L^\infty $, $\nabla f\in L^\infty _T(L^\infty)$ and $\nabla F\in L^1 _T(L^\infty)$, then
\begin{equation*}
\begin{split}
&\|f(t)\|_{B_{2,r}^s}+\|\nabla f(t)\|_{L^\infty}\\
&\quad \leq  e^{CV(t)}\left(\|f_0\|_{B_{2,r}^s}+\|\nabla f_0\|_{L^\infty}+\int_0^t e^{-CV(\tau)}(\|F(\tau)\|_{B_{2,r}^s}+\|\nabla F(\tau)\|_{L^\infty})d\tau\right).
\end{split}
\end{equation*}

3) If $f=v$, then for all $s>0$, the estimate \eqref{2.1} holds with $V(t)=\int_0^t\|\nabla v(\tau)\|_{L^\infty}d\tau$.

4) If $r<\infty$, then $f\in C_T(B_{2,r}^{s})$. If $r=\infty$, then $f\in C_T(B_{2,1}^{s'})$ for all $s'<s$.
\end{lemma}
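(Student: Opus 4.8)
The plan is to establish all four assertions by the standard route for transport estimates in Besov spaces: localize the equation in frequency, run an $L^p$ energy estimate on each dyadic block, and reassemble with a commutator estimate followed by Gronwall's lemma. First I would apply $\Delta_q$ to $(T)$ and write
\[
\partial_t \Delta_q f + v\cdot\nabla \Delta_q f = \Delta_q F + R_q,\qquad R_q := [\,v\cdot\nabla,\Delta_q\,]f = v\cdot\nabla\Delta_q f - \Delta_q(v\cdot\nabla f).
\]
For this localized equation I would run the basic $L^p$ transport estimate: multiplying by $|\Delta_q f|^{p-2}\Delta_q f$, integrating, and using $\int v\cdot\nabla g\,|g|^{p-2}g\,\dd x = -\tfrac1p\int(\div v)|g|^p\,\dd x$ gives
\[
\frac{d}{dt}\|\Delta_q f\|_{L^p} \leq \tfrac1p\|\div v\|_{L^\infty}\|\Delta_q f\|_{L^p} + \|\Delta_q F\|_{L^p} + \|R_q\|_{L^p},
\]
the case $p=\infty$ being handled by a direct argument along the characteristics of $v$. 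Integrating in time, multiplying by $2^{qs}$, and taking the $\ell^r$ norm in $q$ reduces everything to controlling $\bigl\|(2^{qs}\|R_q\|_{L^p})_{q\geq-1}\bigr\|_{\ell^r}$.

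The heart of the matter, and the step I expect to be the main obstacle, is the commutator estimate
\[
\bigl\|(2^{qs}\|R_q\|_{L^p})_{q\geq-1}\bigr\|_{\ell^r} \leq C\,\|\nabla v\|_{Y}\,\|f\|_{B_{p,r}^s},
\]
with $Y = B_{p,r}^{d/p}\cap L^\infty$ when $s<1+\frac dp$ and $Y=B_{p,r}^{s-1}$ otherwise. I would obtain this through Bony's paraproduct decomposition, writing $v\cdot\nabla f = T_{v}\cdot\nabla f + T_{\nabla f}\cdot v + R(v,\nabla f)$ and commuting $\Delta_q$ through each piece. The paraproduct term produces the genuine commutator $[\,T_v,\Delta_q\,]\nabla f$, which I would control by a first-order Taylor expansion of the convolution kernel of $\Delta_q$ together with the mean-value bound
\[
\|[\,S_{q'-1}v,\Delta_q\,]\nabla\Delta_{q'}f\|_{L^p}\lesssim 2^{-q}\|\nabla S_{q'-1}v\|_{L^\infty}\|\nabla\Delta_{q'}f\|_{L^p},
\]
the factors $2^{-q}$ and $2^{q'}$ (from the derivative) balancing for $q'\sim q$; the remaining paraproduct and remainder contributions are then estimated by the standard continuity of $T$ and $R$ on Besov spaces. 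The restriction $r=1$ or $s\neq1+\frac dp$ in part 1) is precisely what prevents a logarithmic loss in the $\ell^r$ summation at the critical index, so the hypotheses are sharp here.

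Once this commutator bound is in hand, part 1) follows from Gronwall's lemma: the differential inequality $\|f(t)\|_{B_{p,r}^s}\leq \|f_0\|_{B_{p,r}^s}+\int_0^t\bigl(\|F(\tau)\|_{B_{p,r}^s}+CV'(\tau)\|f(\tau)\|_{B_{p,r}^s}\bigr)\,d\tau$ integrates to the exponential form \eqref{2.1}, with $V'=\|\nabla v\|_{Y}$ as above. For part 2), the low-regularity regime $s\le \frac dp$, I would estimate $\|\nabla f\|_{L^\infty}$ separately by applying the same localized argument to $\nabla f$ and invoking an $L^\infty$ commutator estimate, then close both quantities simultaneously by Gronwall. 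Part 3), the self-transport case $f=v$, is a simplification: the extra structure lets one bound $R_q$ using only $\|\nabla v\|_{L^\infty}$ for every $s>0$, so $Y$ collapses to $L^\infty$.

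Finally, for the time-continuity statement in part 4) I would approximate $f_0$ and $F$ by smooth data, use the uniform bounds just proved to pass to the limit, and exploit that for $r<\infty$ the tail $\sum_{q\geq N}2^{qrs}\|\Delta_q f\|_{L^p}^r$ is controlled uniformly in $t$, which upgrades the obvious continuity of the finitely many low blocks into continuity into $B_{p,r}^s$; for $r=\infty$ this uniform smallness of the tail fails, and one recovers only continuity into $B_{p,1}^{s'}$ for every $s'<s$ by interpolating the uniform $B_{p,\infty}^s$ bound against continuity in a lower-regularity norm.
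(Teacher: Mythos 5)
This lemma is quoted in the paper directly from \cite{64} with no proof supplied, so there is no internal proof to compare against; your sketch is in effect a reconstruction of the proof in that cited reference. Your outline is correct and is essentially that canonical argument --- dyadic localization of $(T)$, the $L^p$ block estimate with the $\tfrac1p\|\div v\|_{L^\infty}$ term, the Bony-decomposition commutator bound with $\|\nabla v\|_{B^{d/p}_{p,r}\cap L^\infty}$ or $\|\nabla v\|_{B^{s-1}_{p,r}}$ according to the regime (including the correct explanation of why $r=1$ or $s\neq 1+\tfrac dp$ is needed at the critical index), Gronwall, and the standard refinements for parts 2)--4).
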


\begin{lemma} [\cite{64}] \label{lem:transwell}
Let $(p,p_1,r)\in [1,\infty]^3$. Assume that $s>-d\min\{\frac{1}{p_1},\frac{1}{p'}\}$ with $p'=(1-\frac{1}{p})^{-1}$. Let $f_0\in B_{p,r}^s$ and $F\in L^1_T(B_{p,r}^s)$. Let $v \in L^\rho_T( B^{-M}_{\infty,\infty})$ for some $\rho > 1$, $M > 0$ and $\nabla v \in L^1_T(B^{\frac{d}{p_1}}_{p_1,\infty}\bigcap L^\infty)$ if $s < 1+\frac{d}{p_1}$, and $\nabla v  \in L^1_T( B^{s-1}_{p_1,r})$ if $s >  1+\frac{d}{p_1}$ or $s = 1+\frac{d}{p_1}$ and $r =1$. Then $(T)$ has a unique solution $f\in L^\infty_T( B^{s}_{p,r})\bigcap (\bigcap_{s'<s} C_T(B^{s'}_{p,1})$ and the inequalities in Lemma \ref{lem:priorieti} hold true. If, moreover, $r<\infty$, then we have $f\in  C_T(B^{s}_{p,r})$.
\end{lemma}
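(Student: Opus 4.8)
The a priori estimates of Lemma~\ref{lem:priorieti} are the heart of the matter; since they are granted, the remaining work is to convert them into an actual solution, to establish uniqueness, and to pin down the time regularity. The plan is to dispatch uniqueness first, then build existence by a Littlewood--Paley regularization combined with a Cauchy-sequence argument, and finally read off the continuity-in-time statements from part~4) of Lemma~\ref{lem:priorieti}.

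For uniqueness, suppose $f_1,f_2$ are two solutions in the stated class sharing the same data; then $w=f_1-f_2$ solves $(T)$ with $F\equiv0$ and $w|_{t=0}=0$. Applying inequality \eqref{2.1} of Lemma~\ref{lem:priorieti} to $w$ at level $s$ --- or, in the degenerate case $s=1+\frac{d}{p}$ with $r\neq1$, at a slightly lower level $s'<s$, which is legitimate since $w\in L^\infty_T(B^s_{p,r})\hookrightarrow L^\infty_T(B^{s'}_{p,r})$ and the hypotheses on $\nabla v$ remain in force there --- yields $\|w(t)\|\le e^{CV(t)}\cdot0=0$ for every $t\in[0,T]$, whence $w\equiv0$.

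For existence I would regularize by frequency truncation: set $v_n=S_nv$, $f_0^n=S_nf_0$, $F^n=S_nF$. Each $v_n$ is smooth in $x$, so the flow $\psi^n$ generated by $\dot\psi^n_t=v_n(t,\psi^n_t)$ is well defined and the method of characteristics produces a unique classical solution $f^n$ of the truncated problem, with $f^n(t,\psi^n_t(x))=f_0^n(x)+\int_0^t F^n(\tau,\psi^n_\tau(x))\,d\tau$. Because $S_n$ is uniformly bounded on every Besov space, the hypotheses hold for $(v_n,f_0^n,F^n)$ with constants independent of $n$, so Lemma~\ref{lem:priorieti} bounds $\|f^n\|_{L^\infty_T(B^s_{p,r})}$ uniformly in $n$. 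To upgrade this to convergence I examine $w^{n,m}=f^{n+m}-f^n$, which solves $(T)$ with velocity $v_{n+m}$, initial datum $(S_{n+m}-S_n)f_0$, and source $(S_{n+m}-S_n)F-(v_{n+m}-v_n)\cdot\nabla f^n$. Applying \eqref{2.1} at level $s-1$ and estimating the source by the Besov product rules, the data and forcing contributions tend to $0$ as $n\to\infty$ uniformly in $m$, while $\|(v_{n+m}-v_n)\cdot\nabla f^n\|_{B^{s-1}_{p,r}}$ is controlled by $\|v_{n+m}-v_n\|\,\|\nabla f^n\|$ and likewise vanishes. Hence $(f^n)$ is Cauchy in $C_T(B^{s-1}_{p,r})$; interpolating its limit $f$ against the uniform $B^s_{p,r}$ bound places $f$ in $L^\infty_T(B^s_{p,r})\cap C_T(B^{s'}_{p,r})$ for all $s'<s$, and passing to the limit in the truncated equations (using $v_n\cdot\nabla f^n\to v\cdot\nabla f$ in $\mathcal{S}'$) shows that $f$ solves $(T)$.

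Finally, $f\in L^\infty_T(B^s_{p,r})$ follows from Fatou-type lower semicontinuity of the $B^s_{p,r}$-norm under the convergence just obtained, and the a priori inequalities transfer to $f$ by the same limiting procedure. For the time regularity I invoke part~4) of Lemma~\ref{lem:priorieti}: when $r<\infty$ each $\Delta_q f$ is continuous in time and the $\ell^r$-sum of $2^{qs}\|\Delta_q f\|_{L^p}$ is uniformly dominated, so dominated convergence gives $f\in C_T(B^s_{p,r})$; when $r=\infty$ this degrades to $f\in C_T(B^{s'}_{p,1})$ for $s'<s$. I expect the main obstacle to be the product estimate for the source $(v_{n+m}-v_n)\cdot\nabla f^n$ in the Cauchy step: Bony's decomposition must be used to shift the derivative onto the low-regularity factor, and the borderline index $s=1+\frac{d}{p_1}$ together with the low-regularity constraint $s>-d\min\{1/p_1,1/p'\}$ (which is exactly what keeps the paraproducts well defined) forces the hypotheses on $\nabla v$ to be split into the two stated cases, mirroring the structure of Lemma~\ref{lem:priorieti}.
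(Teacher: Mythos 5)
First, a point of context: the paper never proves this lemma at all --- it is imported as a black box from \cite{64} (Bahouri--Chemin--Danchin), so the only benchmark is the standard proof given there. Your outline follows that standard scheme: uniqueness via the a priori estimates of Lemma \ref{lem:priorieti}, existence by frequency-truncating the data and velocity, solving the regularized problems along characteristics, uniform bounds, convergence, Fatou, and the time-regularity claims from part 4). The uniqueness step, the $2^{-n}$ decay of the data and forcing terms, and the final continuity statements are all sound (two small remarks: the degenerate case $s=1+\frac{d}{p}$ with $r\neq 1$ that you patch in the uniqueness step is in fact excluded by the hypotheses, which demand $r=1$ there; and Lemma \ref{lem:priorieti} as transcribed in this paper only covers $p_1=p$, so for $p_1\neq p$ one must invoke the fuller estimate of \cite{64}).

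The genuine gap is in your Cauchy-sequence step, and it is exactly the obstacle you flag at the end without closing. You propose to apply \eqref{2.1} to $w^{n,m}$ at level $s-1$ and to bound the source $(v_{n+m}-v_n)\cdot\nabla f^n$ by $\|v_{n+m}-v_n\|\,\|\nabla f^n\|$. Two things fail when $s$ is near the admissible lower bound. First, Lemma \ref{lem:priorieti} at regularity $s-1$ requires $s-1>-\frac{d}{p}$, which is not implied by the hypothesis $s>-d\min\{\frac{1}{p_1},\frac{1}{p'}\}$: for $d=1$, $p=p_1=2$ the lemma allows any $s>-\frac12$, whereas your level-$(s-1)$ estimate needs $s>\frac12$. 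Second, the product map $L^\infty\times B^{s-1}_{p,r}\to B^{s-1}_{p,r}$ is unbounded once $s-1\le 0$: in Bony's decomposition only the paraproduct $T_{g}\nabla f^n$ obeys such a bound, while the remainder $R(g,\nabla f^n)$ needs total regularity strictly positive, i.e.\ $s>0$ even after spending the one derivative of $g=(v_{n+m}-v_n)$ available from $\nabla v\in L^1_T(L^\infty)$. So in the low-regularity range the statement covers, the scheme as written cannot be completed; naming the paraproduct issue is not the same as resolving it, and here it is not merely technical bookkeeping. The standard repair, compatible with the full range of hypotheses, is to replace the Cauchy estimate by compactness: uniform bounds in $L^\infty_T(B^s_{p,r})$, equicontinuity in time in a much weaker norm read off from the equation (using $v\in L^\rho_T(B^{-M}_{\infty,\infty})$ and the paraproduct laws), Ascoli combined with the local compactness of $B^s_{p,r}\hookrightarrow B^{s'}_{p,r}$, and the Fatou property to place the limit in $L^\infty_T(B^s_{p,r})$; your (correct) uniqueness argument then upgrades subsequential convergence to convergence of the whole family. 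Your Cauchy route is salvageable only in the restricted range where $s-1$ is itself admissible and the product laws apply.
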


The following a priori estimates for the transport equation $(T)$ is crucial in the study of wave-breaking criteria.
\begin{lemma} (\cite{gui2010global})\label{lem:prioriSob}
 Let $0\leq s <1$. Suppose that $f\in L^\infty_T(H^s)\cap C_T(\mathcal {S}')$ is a solution to $(T)$ with velocity $v,\partial_xv\in L^1_T(L^\infty)$, initial data $f_0\in H^s$, $F\in L^1_T(H^s)$. Then, there exists a constant $C>0$ such that
$$
\|f(t)\|_{H^s}\leq  \|f_0\|_{H^s}+ C\int_0^t \|F(\tau)\|_{H^s}d\tau+C\int_0^t V'(\tau)\|f(\tau)\|_{H^s}d\tau,
$$
where
$$
V(t)= \int_0^t(\|v(\tau)\|_{L^\infty}+\|\partial_xv(\tau)\|_{L^\infty} )d\tau.
$$
\end{lemma}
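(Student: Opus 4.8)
The plan is to localise the equation in frequency with the Littlewood--Paley decomposition and run an $L^2$ energy estimate on each dyadic block, the delicate point being a commutator estimate that uses only the $L^\infty$-norms of $v$ and $\partial_x v$. I emphasise at the outset that, although $H^s = B^s_{2,2}$, this statement does \emph{not} follow from Lemma~\ref{lem:priorieti}: there the gain is measured by $\int_0^t\|\nabla v\|_{B^{d/p}_{p,r}\cap L^\infty}\,d\tau$, whereas here only $\|v\|_{L^\infty}+\|\partial_x v\|_{L^\infty}$ is available, a strictly weaker control on $v$. Hence the estimate must be re-derived by hand, and the restriction $0\le s<1$ is exactly what makes the weaker commutator estimate work.

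First I would apply $\Delta_q$ to $(T)$ and set $f_q=\Delta_q f$, so that
\begin{equation*}
\partial_t f_q + v\,\partial_x f_q = \Delta_q F + R_q, \qquad R_q := [v,\Delta_q]\partial_x f = v\,\partial_x\Delta_q f-\Delta_q(v\,\partial_x f).
\end{equation*}
Taking the $L^2$ inner product with $f_q$ and integrating by parts in the transport term, $\int v\,\partial_x f_q\,f_q\,dx = -\tfrac12\int \partial_x v\,f_q^2\,dx$, gives, after dividing by $\|f_q\|_{L^2}$ and integrating in time,
\begin{equation*}
\|f_q(t)\|_{L^2} \le \|\Delta_q f_0\|_{L^2} + \int_0^t\Big(\tfrac12\|\partial_x v\|_{L^\infty}\|f_q\|_{L^2} + \|\Delta_q F\|_{L^2} + \|R_q\|_{L^2}\Big)\,d\tau.
\end{equation*}
I would then multiply by $2^{qs}$, take the $\ell^2$-norm over $q\ge-1$, and use Minkowski's inequality to pull the $\ell^2$-norm inside the time integral; the first and second terms reproduce $\|\partial_x v\|_{L^\infty}\|f\|_{H^s}$ and $\|F\|_{H^s}$ directly.

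The main obstacle is to control the commutator term $\big\|(2^{qs}\|R_q\|_{L^2})_q\big\|_{\ell^2}$. Here I would invoke the Bony decomposition $v\,\partial_x f = T_v\partial_x f + T_{\partial_x f}v + R(v,\partial_x f)$ and split $R_q$ accordingly. The paraproduct piece $[T_v,\Delta_q]\partial_x f$ gains one derivative off the kernel of $\Delta_q$ by a mean-value argument and is bounded by $c_q 2^{-qs}\|\partial_x v\|_{L^\infty}\|f\|_{H^s}$ with $(c_q)\in\ell^2$; the remaining paraproduct and remainder pieces, in which $v$ sits at the output frequency, are estimated by $\|v\|_{L^\infty}$ times a suitable frequency sum. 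It is precisely the condition $s<1$ that renders the remainder series $\sum_k 2^{k(1-s)}(\cdots)$ summable after the commutator gain, and $s\ge0$ that keeps the low-frequency block $q=-1$ under control; the outcome is
\begin{equation*}
\Big\|\big(2^{qs}\|R_q\|_{L^2}\big)_{q\ge-1}\Big\|_{\ell^2} \le C\big(\|v\|_{L^\infty}+\|\partial_x v\|_{L^\infty}\big)\|f\|_{H^s}.
\end{equation*}
Inserting this bound, absorbing the $\tfrac12\|\partial_x v\|_{L^\infty}$ term into it, and recalling $V'(\tau)=\|v\|_{L^\infty}+\|\partial_x v\|_{L^\infty}$ yields the claimed inequality.

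Finally, I would address the regularity technicalities: since $v$ is merely Lipschitz in space and $f$ is only a distributional solution in $C_T(\mathcal{S}')$, the formal energy identity above is not immediately licit. I would carry out the estimate first for smooth approximations $(v^n,f^n,F^n)$ obtained by mollification, for which all computations are classical, and then pass to the limit using the uniform bounds together with the continuity in $C_T(\mathcal{S}')$ to recover the estimate for $(v,f,F)$. I expect the commutator estimate to be the genuinely substantive step, the per-block energy estimate and the limiting argument being routine.
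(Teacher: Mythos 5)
The first thing to say is that the paper contains no proof of this lemma: it is imported verbatim from \cite{gui2010global} and used as a black box, so there is no in-paper argument to compare yours against; your proposal has to be judged against the standard proof in the literature, which is exactly the route you take (frequency localization, per-block $L^2$ energy estimate, Bony decomposition of the commutator $[v,\Delta_q]\partial_x f$). Your two framing points are correct and worth making: the result is not a corollary of Lemma~\ref{lem:priorieti}, whose growth rate involves $\|\partial_x v\|_{B^{d/p}_{p,r}\cap L^\infty}$ rather than only $\|v\|_{L^\infty}+\|\partial_x v\|_{L^\infty}$, and the restriction $s<1$ is what makes the paraproduct piece $\Delta_q T_{\partial_x f}v$ work, via $\|S_{q'-1}\partial_x f\|_{L^2}\lesssim 2^{q'(1-s)}e_{q'}\|f\|_{H^s}$ with $(e_{q'})\in\ell^2$ obtained from Young's inequality for the convolution kernel $2^{j(1-s)}\mathbf{1}_{j\le -2}$.

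There is, however, one soft spot: the endpoint $s=0$, which the lemma includes. After multiplying by $2^{qs}$, the remainder piece $\Delta_q R(v,\partial_x f)$ is controlled by $\bigl(\|v\|_{L^\infty}+\|\partial_x v\|_{L^\infty}\bigr)\sum_{q'\ge q-N}2^{(q-q')s}d_{q'}\|f\|_{H^s}$ with $(d_{q'})\in\ell^2$, and the kernel $2^{js}\mathbf{1}_{j\le N}$ is summable only for $s>0$; at $s=0$ Young's inequality gives nothing, and rewriting $R(v,\partial_x f)=\partial_x R(v,f)-R(\partial_x v,f)$ does not help, since $R(\partial_x v,f)$ hits the same obstruction (the remainder maps $B^0_{\infty,\infty}\times H^s\to H^s$ only for $s>0$). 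So your claimed commutator bound, stated for all $0\le s<1$, is not established by the sketch at $s=0$. Relatedly, your parenthetical attribution is off: it is not ``$s\ge 0$'' that controls the low-frequency block $q=-1$ --- that block is precisely why $\|v\|_{L^\infty}$ must appear in $V'$, since $\|\Delta_{-1}v\|_{L^\infty}$ cannot be bounded by $\|\partial_x v\|_{L^\infty}$ --- while the genuine lower constraint on $s$ comes from the remainder and is strict. The fix is cheap and standard: for $s=0$ the lemma is the elementary $L^2$ energy estimate (multiply $(T)$ by $f$ and integrate by parts, no Littlewood--Paley needed), so treat that case separately and run your commutator argument for $0<s<1$. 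With that patch, together with the mollification step you already include to justify the formal computations, your proof is complete and coincides with the argument behind the cited reference.
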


\section{Global well-posedness in Besov spaces}

In this section, we mainly focus on the proof of Theorem \ref{theorem1}, which will be divided into the following several steps. Since the proof of Theorem \ref{theorem2} is completely same to Theorem \ref{theorem1}, we shall omit the details here.

\vskip3mm\noindent
\textbf{Step 0 (Approximate solutions):} We construct the approximate solutions via the Friedrichs iterative method. Starting from $(u^{(1)},\sigma^{(1)})\overset{\text{def}}{=} (S_1 u_0,S_1\sigma_0)$, we recursively define a sequence of functions
$(u^{(n)},\sigma^{(n)})_{n\geq1}$ by solving the following linear transport equations
\begin{equation}\label{3.1}
\begin{cases}
 u_t^{(n+1)}+\alpha(t) u^{(n)}u_x^{(n+1)}=-\partial_xp*\left(\frac{\beta(t)}{2} (u^{(n)})^2+\frac{\gamma(t)}{2} (\sigma^{(n)})^2+\frac{\beta(t)-3\alpha(t)}{2}(u_x^{(n)})^2\right),\\
 \sigma_t^{(n+1)}+\xi(t)u^{(n)}\sigma_x^{(n+1)}=-\xi(t)\sigma^{(n)} u_x^{(n)},\\
 u^{(n+1)}|_{t=0}=S_{n+1}u_0,\quad\sigma^{(n+1)}|_{t=0}=S_{n+1}\sigma_0.
\end{cases}
\end{equation}
Assuming that $(u^{(n)},\sigma^{(n)})\in E_{p,r}^s(T)$ for any positive $T $, it is seen that the right hand side of the system \eqref{3.1} belongs to $L^1_{loc}([0,\infty);B_{p,r}^s\times B_{p,r}^{s-1} )$ since the parameters $\alpha,\beta,\gamma,\xi \in L^1([0,\infty);\mathbb{R})$. Hence, applying  Lemma \ref{lem:transwell} ensures that the Cauchy problem \eqref{3.1} has a global solution $(u^{(n+1)},\sigma^{(n+1)})$ which bellongs to $E_{p,r}^s(T)$ for any positive $T $.

\vskip3mm\noindent
\textbf{Step 1 (Uniform bound):}
Applying the Lemma \ref{lem:priorieti} to \eqref{3.1}, we get
\begin{equation}\label{3.2}
\begin{split}
&\|u^{(n+1)}(t)\|_{B^{s}_{p,r}} \leq \|S_{n+1}u_0\|_{B^{s}_{p,r}}+C\int_0^t \|\alpha(t')\partial_x u^{(n)}(t')\|_{B^{s-1}_{p,r}}\|u^{(n+1)}(t')\|_{B^{s}_{p,r}} dt'\\
&\quad + \int_0^t \left\|\partial_xp*\left(\frac{\beta(t')}{2} (u^{(n)}(t'))^2+\frac{\gamma(t')}{2} (\sigma^{(n)}(t'))^2+\frac{\beta(t')-3\alpha(t')}{2}(u_x^{(n)}(t'))^2\right)\right\|_{B^{s}_{p,r}} dt',
\end{split}
\end{equation}
and
\begin{equation}\label{3.3}
\begin{split}
\|\sigma^{(n+1)}(t)\|_{B^{s-1}_{p,r}} \leq& \|S_{n+1}\sigma_0\|_{B^{s}_{p,r}}+C\int_0^t \|\xi(t')\partial_x u^{(n)}(t')\|_{B^{s-1}_{p,r}}\|\sigma^{(n+1)}(t')\|_{B^{s}_{p,r}} dt'\\
&+ \int_0^t \|\xi(t')\sigma^{(n)}(t') u_x^{(n)}(t')\|_{B^{s}_{p,r}} dt'.
\end{split}
\end{equation}
Using the fact that $(1-\partial_x^2)^{-1}$ is a $S^{-2}$-multiplier and the Besov space $B^{s}_{p,r}$ is a Banach algebra for any $s>\frac{1}{p}$, we deduce from \eqref{3.2} and \eqref{3.3} that
\begin{equation*}
\begin{split}
&\|u^{(n+1)}(t)\|_{B^{s}_{p,r}}+\|\sigma^{(n+1)}(t)\|_{B^{s-1}_{p,r}} \leq C\left(\|u_0\|_{B^{s}_{p,r}}+\|\sigma_0\|_{B^{s-1}_{p,r}}\right)\\
&\quad +C\int_0^t(|\alpha(t')|+|\xi(t')|)\|u^{(n)}(t')\|_{B^{s}_{p,r}}\left(\|u^{(n+1)}(t')\|_{B^{s}_{p,r}}+\|\sigma^{(n+1)}(t')\|_{B^{s}_{p,r}}\right) dt'\\
&  \quad+ C\int_0^t (|\alpha(t')|+|\beta(t')|+|\gamma(t')|+|\xi(t')|)\left(\|u^{(n)}(t')\|_{B^{s}_{p,r}}^2+ \|\sigma^{(n)}(t')\|_{B^{s-1}_{p,r}}^2\right) dt'.
\end{split}
\end{equation*}
An application of the Gronwall lemma  to above inequality leads to
\begin{equation}\label{3.4}
\begin{split}
&\|u^{(n+1)}(t)\|_{B^{s}_{p,r}}+\|\sigma^{(n+1)}(t)\|_{B^{s-1}_{p,r}}\\
&\quad \leq Ce^{C\int_0^t(|\alpha(t')|+|\xi(t')|)\|u^{(n)}(t')\|_{B^{s}_{p,r}}dt'}\bigg[\|u_0\|_{B^{s}_{p,r}}+\|\sigma_0\|_{B^{s-1}_{p,r}}\\
&\quad + C\int_0^t(|\alpha(t')|+|\beta(t')|+|\gamma(t')|+|\xi(t')|)\left(\|u^{(n)}(t')\|_{B^{s}_{p,r}}^2+ \|\sigma^{(n)}(t')\|_{B^{s-1}_{p,r}}^2\right)\bigg] dt'.
\end{split}
\end{equation}
Define
$$
H_0\overset{\text{def}}{=} H^{(n)}(0)=\|u_0\|_{B^{s}_{p,r}}+\|\sigma_0\|_{B^{s-1}_{p,r}},
$$
and
$$
H^{(n)}(t)\overset{\text{def}}{=} \|u^{(n)}(t)\|_{B^{s}_{p,r}}+\|\sigma^{(n)}(t)\|_{B^{s-1}_{p,r}},\quad \mbox{for all}~ n \geq 1.
$$
It follows  from the inequality \eqref{3.4} that
\begin{equation}\label{3.5}
\begin{split}
H^{(n+1)}(t) \leq &Ce^{C\int_0^t(|\alpha(t')|+|\xi(t')|)H^{(n)}(t')dt'}\\
&  \times\bigg(H_0+ \int_0^t(|\alpha(t')|+|\beta(t')|+|\gamma(t')|+|\xi(t')|)(H^{(n)}(t'))^2 dt'\bigg).
\end{split}
\end{equation}
Notice that the iterative inequality \eqref{3.5}  contains additional factor $|\alpha(t)|+|\gamma(t)|+|\gamma(t)|+|\xi(t)|$, the classic method used in \cite{danchin2001few} is inapplicable in present case. To overcome this difficulty, we use another iterative method to derive the uniform bound.

For $n=1$, it follows from \eqref{3.5} that
\begin{equation*}
\begin{split}
& \sup_{t\in [0,\infty)}H^{(1)}(t) \leq 2C e^{ 2C^2H_0\int_0^{\infty}(|\alpha(t')|+|\xi(t')|)dt'}\\
&  \quad \times\bigg(H_0+ 4C^2H_0^2 \int_0^{\infty}(|\alpha(t')|+|\beta(t')|+|\gamma(t')|+|\xi(t')|)dt'\bigg) = 2C h(H_0),
\end{split}
\end{equation*}
where
$$
h(x)\overset{\text{def}}{=}  e^{ 2C^2x\int_0^{\infty}(|\alpha(t')|+|\xi(t')|)dt'}\bigg(x+ 4C^2x^2 \int_0^{\infty}(|\alpha(t')|+|\beta(t')|+|\gamma(t')|+|\xi(t')|)dt'\bigg).
$$
It is clear that $h(0)=0$ and the function $h(x) $ is a modulus of continuity defined on $\mathbb{R}^+$, which is  independent of the initial data $(m_0,n_0)$.

For $k=2$, we deduce from \eqref{3.5}  that
\begin{equation}\label{3.6}
\begin{split}
\sup_{t\in [0,\infty)}H^{(2)}(t)\leq &Ce^{2C^2 h(H_0)\int_0^\infty(|\alpha(t')|+|\xi(t')|)dt'}\\
&  \times\bigg(H_0+ 4C^2 h^2(H_0)\int_0^\infty(|\alpha(t')|+|\beta(t')|+|\gamma(t')|+|\xi(t')|) dt'\bigg)\\
\leq &C h(H_0)e^{2C^2 h(H_0)\int_0^\infty(|\alpha(t')|+|\xi(t')|)dt'}\\
&  \times\bigg(1+ 4C^2 h(H_0)\int_0^\infty(|\alpha(t')|+|\beta(t')|+|\gamma(t')|+|\xi(t')|) dt'\bigg)\\
\leq &C h(H_0) e^{6C^2 h(H_0)\int_0^\infty(|\alpha(t')|+|\beta(t')|+|\gamma(t')|+|\xi(t')|) dt'}.
\end{split}
\end{equation}
In the last inequality of \eqref{3.6}, we have used the facts of $h(H_0)\geq H_0$ and $1+x \leq e^x$ for any $x\geq0$. To estimate the right hand side  of \eqref{3.6}, we assume that
\begin{equation*}
\begin{split}
&\int_0^\infty(|\alpha(t')|+|\beta(t')|+|\gamma(t')|+|\xi(t')|) dt'\\
&\quad \leq \frac{\ln 2}{6C^2 e^{ 2C^2H_0\int_0^{\infty}(|\alpha(t')|+|\xi(t')|)dt'}\left(H_0+ 4C^2H_0^2 \int_0^{\infty}(|\alpha(t')|+|\beta(t')|+|\gamma(t')|+|\xi(t')|)dt'\right)}\\
&\quad = \frac{\ln 2}{6C^2 h(H_0) },
\end{split}
\end{equation*}
which combined with \eqref{3.6} imply that
\begin{equation}\label{3.7}
\begin{split}
\sup_{t\in [0,\infty)}H^{(2)}(t)&\leq  C h(H_0) e^{6C^2 h(H_0)\int_0^\infty(|\alpha(t')|+|\beta(t')|+|\gamma(t')|+|\xi(t')|) dt'} \leq 2C h(H_0) .
\end{split}
\end{equation}
Assuming for any given $n\geq 3$ that
\begin{equation*}
\begin{split}
\sup_{t\in [0,\infty)}H^{(n)}(t) \leq 2Ch(H_0).
\end{split}
\end{equation*}
For $H^{(n+1)}(t)$, it follows from \eqref{3.5} and \eqref{3.7} that
\begin{eqnarray*}
\begin{split}
\sup_{t\in [0,\infty)}H^{(n+1)}(t) \leq &Ce^{C\int_0^\infty(|\alpha(t')|+|\xi(t')|)dt'\cdot\sup\limits_{t'\in [0,\infty)}H^{(n)}(t')}\\
&  \times\bigg(H_0+ \int_0^\infty(|\alpha(t')|+|\beta(t')|+|\gamma(t')|+|\xi(t')|)dt'\cdot\sup_{t'\in [0,\infty)}(H^{(n)}(t'))^2\bigg)\\
\leq &Ce^{2C^2h(H_0)\int_0^\infty(|\alpha(t')|+|\xi(t')|)dt'}\\
&  \times\bigg(H_0+ 4C^2h^2(H_0)\int_0^\infty(|\alpha(t')|+|\beta(t')|+|\gamma(t')|+|\xi(t')|)  dt'\bigg)\\
\leq &Ch(H_0)e^{2C^2h(H_0)\int_0^\infty(|\alpha(t')|+|\xi(t')|)dt'}\\
&  \times\bigg(1+ 4C^2h(H_0)\int_0^\infty(|\alpha(t')|+|\beta(t')|+|\gamma(t')|+|\xi(t')|)  dt'\bigg)\\
\leq &C h(H_0) e^{6C^2 h(H_0)\int_0^\infty(|\alpha(t')|+|\beta(t')|+|\gamma(t')|+|\xi(t')|) dt'}\\
\leq &2Ch(H_0).
\end{split}
\end{eqnarray*}
Using the mathematical induction with respect to $n$, we get
\begin{equation*}
\begin{split}
\sup_{n\geq 1}\sup_{t\in [0,\infty)}H^{(n+1)}(t) \leq  2Ch(H_0),
\end{split}
\end{equation*}
which implies the uniform bound
\begin{equation}\label{3.8}
\begin{split}
\sup_{n\geq 1}\Big(\|u^{(n)}\|_{L^\infty([0,\infty);B^{s}_{p,r})}+\|\sigma^{(n)}\|_{L^\infty([0,\infty);B^{s-1}_{p,r})}\Big)\leq 2Ch(H_0).
\end{split}
\end{equation}
As a result, the approximate solutions $(u^{(n)},\sigma^{(n)})_{n\geq1}$ is uniformly bounded in  $C([0,\infty); B^{s}_{p,r}\times B^{s-1}_{p,r})$. Moreover, using the system \eqref{3.1} itself, one can verify that the sequence $(\partial_tu^{(n)},\partial_t\sigma^{(n)})_{n\geq1}$ is uniformly bounded in $C([0,\infty); B^{s-1}_{p,r}\times B^{s-2}_{p,r})$. Therefore we obtain that $(u^{(n)},\sigma^{(n)})_{n\geq1}$ is uniformly bounded in $ E^{s}_{p,r}(\infty)$.

\vskip3mm\noindent
\textbf{Step 2 (Convergence):}
We are going to show that $(u^{(n)},\sigma^{(n)})_{n\geq1}$ is a Cauchy sequence in the space $C([0,\infty); B^{s-1}_{p,r}\times B^{s-2}_{p,r})$. Indeed, according to \eqref{3.1}, we obtain that for any $n,m\geq 1$
\begin{equation}\label{3.9}
\begin{split}
 &\partial_t (u^{(n+1)}-u^{(n+m+1)})+\alpha(t) u^{(n)}\partial_x (u^{(n+1)}-u^{(n+m+1)})\\
& \quad =-\alpha(t)( u^{(n)}-u^{(n+m)})u_x^{(n+m+1)} +\frac{\beta(t)}{2}\partial_xp*[(u^{(n+m)}+(u^{(n)})(u^{(n+m)}- u^{(n)})] \\
  & \quad \quad+\frac{\gamma(t)}{2}\partial_xp*[(\sigma^{(n+m)}+\sigma^{(n)})(\sigma^{(n+m)}- \sigma^{(n)})]\\
 &  \quad \quad +\frac{\beta(t)-3\alpha(t)}{2}\partial_xp*[(u_x^{(n+m)}+u_x^{(n)})(u^{(n+m)}_x- u^{(n)}_x)],
\end{split}
\end{equation}
and
\begin{equation}\label{3.10}
\begin{split}
&\partial_t(\sigma^{(n+1)}-^{(n+m+1)})+\xi(t)u^{(n)}\partial_x(\sigma^{(n+1)}-\sigma^{(n+m+1)})\\
& \quad =-\xi(t)( u^{(n)}-u^{(n+m)})\sigma_x^{(n+m+1)}+\xi(t)(\sigma^{(n+m)}- \sigma^{(n)}) u_x^{(n+m)}\\
 & \quad \quad +\xi(t)\sigma^{(n)} (u^{(n+m)}_x- u^{(n)}_x),
\end{split}
\end{equation}
with the initial conditions
\begin{equation*}
\begin{split}
& (u^{(n+1)}-u^{(n+m+1)})(0,x)=S_{n+1}u_0(x)-S_{n+m+1}u_0(x),\\
& (\sigma^{(n+1)}-\sigma^{(n+m+1)})(0,x)=S_{n+1}\sigma_0(x)-S_{n+m+1}\sigma_0(x).
\end{split}
\end{equation*}
Applying the Lemma \ref{lem:priorieti} to  \eqref{3.9}, we deduce that
\begin{equation}\label{3.11}
\begin{split}
&e^{-C\int_0^t\|\alpha(t')\partial_x u^{(n)}(t')\|_{B^{\frac{1}{p}}_{p,r}\cap L^\infty}dt'}\|(u^{(n+1)}-u^{(n+m+1)})(t)\|_{B^{s-1}_{p,r}} \\
&\quad \leq \|S_{n+1}u_0-S_{n+m+1}u_0\|_{B^{s-1}_{p,r}}+ \int_0^t e^{-C\int_0^{t'}\|\alpha(\tau)\partial_x u^{(n)}(\tau)\|_{B^{\frac{1}{p}}_{p,r}\cap L^\infty}d\tau}\\
&\quad \quad \times\bigg(\left\|\alpha(t')( u^{(n)}-u^{(n+m)})u_x^{(n+m+1)}\right\|_{B^{s-1}_{p,r}}+\left\|\frac{\beta(t')}{2}\partial_xp*[(u^{(n+m)}+u^{(n)})(u^{(n+m)}- u^{(n)})]\right\|_{B^{s-1}_{p,r}}\\
&\quad \quad    +\left\|\frac{\gamma(t')}{2}\partial_xp*[(\sigma^{(n+m)}+\sigma^{(n)})(\sigma^{(n+m)}- \sigma^{(n)})]\right\|_{B^{s-1}_{p,r}}\\
&\quad \quad   +\left\|\frac{\beta(t')-3\alpha(t')}{2}\partial_xp*[(u_x^{(n+m)}+u_x^{(n)})(u^{(n+m)}_x- u^{(n)}_x)]\right\|_{B^{s-1}_{p,r}}\bigg) dt' ,
\end{split}
\end{equation}
For $s>1+\frac{1}{p}$, the space $B^{s-1}_{p,r}$ is a Banach algebra and $B^{s-1}_{p,r}$ continuously embedded into $B^{\frac{1}{p}}_{p,r}\cap L^\infty$, we get from the uniform bound \eqref{3.8} that
\begin{equation*}
\begin{split}
\int_0^t\|\alpha(t')\partial_x u^{(n)}(t')\|_{B^{\frac{1}{p}}_{p,r}\cap L^\infty}dt'\leq C\int_0^t|\alpha(t')|\|u^{(n)}(t')\|_{B^{s}_{p,r}}dt'\leq C\int_0^t|\alpha(t')|dt',
\end{split}
\end{equation*}
\begin{equation*}
\begin{split}
\left\|\alpha(t')( u^{(n)}-u^{(n+m)})u_x^{(n+m+1)}\right\|_{B^{s-1}_{p,r}}&\leq C|\alpha(t)| \|u^{(n+m+1)} \|_{B^{s }_{p,r}}\|u^{(n)}-u^{(n+m)}\|_{B^{s-1 }_{p,r}}\\
&\leq C|\alpha(t)| \|u^{(n)}-u^{(n+m)}\|_{B^{s-1 }_{p,r}},
\end{split}
\end{equation*}
and
\begin{equation*}
\begin{split}
&\left\|\frac{\beta(t')}{2}\partial_xp*[(u^{(n+m)}+u^{(n)})(u^{(n+m)}- u^{(n)})]\right\|_{B^{s-1}_{p,r}}\\
&\quad \leq C |\beta(t')|(\|u^{(n+m)}\|_{B^{s}_{p,r}}+\|u^{(n)}\|_{B^{s}_{p,r}})\|u^{(n+m)}- u^{(n)}\|_{B^{s-1}_{p,r}}\\
&\quad \leq C |\beta(t')|\|u^{(n+m)}- u^{(n)}\|_{B^{s-1}_{p,r}},
\end{split}
\end{equation*}

$\bullet$ For $\max\{1+\frac{1}{p},\frac{3}{2}\}< s\leq 2+\frac{1}{p}$, we first recall the following conclusion.
\begin{lemma}[Moser estimate \cite{danchin2001few}] \label{moser}
Let
$s_1\leq 1/p<s_2$ $(s_2\geq 1/p$ if $r=1$) and $s_1+s_2>0$, then
\begin{equation*}
\begin{split}
\|fg\|_{B^{s_1}_{p,r}}\leq  C\|f\|_{B^{s_1}_{p,r}}\|g\|_{B^{s_2}_{p,r}}.
\end{split}
\end{equation*}
\end{lemma}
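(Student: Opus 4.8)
The plan is to prove the product estimate by Bony's paraproduct decomposition, splitting $fg$ into two paraproducts and a remainder and estimating each piece in $B^{s_1}_{p,r}$. Writing the decomposition in terms of the blocks $\Delta_q$ and $S_q$ introduced in Section~\ref{sec:prelim},
\begin{equation*}
fg=T_fg+T_gf+R(f,g),\qquad T_fg=\sum_{q}S_{q-1}f\,\Delta_qg,\quad R(f,g)=\sum_{q}\sum_{|i|\le1}\Delta_qf\,\Delta_{q+i}g,
\end{equation*}
I would treat the three contributions separately. The structural point is that the hypothesis $s_2>\tfrac1p$ makes $g$ the \emph{smooth} factor, while $f$ carries the low regularity $s_1\le\tfrac1p$, so the two paraproducts must be handled asymmetrically.

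First, for $T_gf$ (the smooth factor in front) each summand $S_{q-1}g\,\Delta_qf$ is spectrally localized in a dyadic annulus $2^q\mathcal{C}$, so almost orthogonality of the blocks together with $\|S_{q-1}g\|_{L^\infty}\lesssim\|g\|_{L^\infty}$ gives $\|T_gf\|_{B^{s_1}_{p,r}}\lesssim\|g\|_{L^\infty}\|f\|_{B^{s_1}_{p,r}}$ for every real $s_1$. The embedding $B^{s_2}_{p,r}\hookrightarrow L^\infty$, valid since $s_2>\tfrac1p$ (and at the endpoint $s_2=\tfrac1p$ only when $r=1$, which is exactly the stated caveat), then bounds $\|g\|_{L^\infty}$ by $\|g\|_{B^{s_2}_{p,r}}$. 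This is the easy term and it pins down the role of $s_2>\tfrac1p$.

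Next, for $T_fg$ (the low-regularity factor in front) I would estimate $\|S_{q-1}f\|_{L^\infty}$ through Bernstein's inequality,
\begin{equation*}
\|S_{q-1}f\|_{L^\infty}\lesssim\sum_{q'\le q-2}2^{q'/p}\|\Delta_{q'}f\|_{L^p}=\sum_{q'\le q-2}2^{q'(\frac1p-s_1)}\,2^{q's_1}\|\Delta_{q'}f\|_{L^p}.
\end{equation*}
Since $s_1\le\tfrac1p$ the weight $2^{q'(\frac1p-s_1)}$ is nondecreasing in $q'$, and pairing this with $\|\Delta_qg\|_{L^p}=2^{-qs_2}c_q$, where $(c_q)\in\ell^r$ has norm $\|g\|_{B^{s_2}_{p,r}}$, reduces each frequency-localized block to a convolution of $\ell^r$-sequences against the geometric weight $2^{q(\frac1p-s_2)}$, which is summable precisely because $s_2>\tfrac1p$. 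Taking the $\ell^r$ norm of $\bigl(2^{qs_1}\|\Delta_q(T_fg)\|_{L^p}\bigr)_q$ and invoking the discrete Young inequality yields $\|T_fg\|_{B^{s_1}_{p,r}}\lesssim\|f\|_{B^{s_1}_{p,r}}\|g\|_{B^{s_2}_{p,r}}$.

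The remainder $R(f,g)$ is the delicate term and I expect it to be the main obstacle. Here each block $\Delta_qf\,\Delta_{q+i}g$ is spectrally supported in a \emph{ball} of radius $\sim2^q$ rather than an annulus, so it contributes to all output frequencies $2^j$ with $j\lesssim q$; the output is therefore governed by a low-frequency accumulation $\|\Delta_jR(f,g)\|_{L^p}\lesssim\sum_{q\ge j-N_0}\|\Delta_qf\,\Delta_{q+i}g\|_{L^p}$. Estimating the product blocks by Hölder's inequality, with one factor kept in $L^p$ and the other passed to $L^\infty$ by Bernstein, and then summing the resulting geometric-type series, the accumulation converges exactly when the combined regularity $s_1+s_2$ is positive; this is the single place where the sign condition $s_1+s_2>0$ enters. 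Collecting the three bounds and applying the triangle inequality in $B^{s_1}_{p,r}$ produces the claimed estimate with $C=C(s_1,s_2,p,r)$. The most technical bookkeeping lies in the discrete convolution sums controlling $T_fg$ and $R(f,g)$ in the weighted $\ell^r$ summation, which I would carry out exactly as in the classical argument of \cite{danchin2001few}.
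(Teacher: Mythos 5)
First, note that the paper itself does not prove this lemma: it is quoted directly from \cite{danchin2001few}, so your proposal can only be measured against the standard paraproduct argument. Your treatment of the two paraproducts is essentially correct: $T_gf$ is controlled by $\|T_gf\|_{B^{s_1}_{p,r}}\lesssim\|g\|_{L^\infty}\|f\|_{B^{s_1}_{p,r}}$ together with the embedding $B^{s_2}_{p,r}\hookrightarrow L^\infty$, and for $T_fg$ your Bernstein bound on $\|S_{q-1}f\|_{L^\infty}$ produces the geometric gain $2^{q(\frac1p-s_2)}$ which closes the $\ell^r$ summation (with the minor caveat that at the corner $s_1=s_2=\frac1p$, $r=1$ the logarithmic loss in your bound has no geometric factor to absorb it, and one should instead estimate $T_fg$ symmetrically to $T_gf$ via $B^{1/p}_{p,1}\hookrightarrow L^\infty$).

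The genuine gap is in the remainder, precisely the term you flagged as delicate. The splitting you propose --- keep one factor of $\Delta_qf\,\Delta_{q+i}g$ in $L^p$ and push the other to $L^\infty$ by Bernstein --- costs a factor $2^{q/p}$ at the \emph{input} frequency $q$. Writing $c_q=2^{qs_1}\|\Delta_qf\|_{L^p}$, $d_q=2^{qs_2}\|\Delta_qg\|_{L^p}$, this yields
\begin{equation*}
2^{js_1}\|\Delta_jR(f,g)\|_{L^p}\lesssim 2^{j(\frac1p-s_2)}\sum_{k\geq-N_0}2^{k(\frac1p-s_1-s_2)}c_{j+k}\,d_{j+k},
\end{equation*}
and since the sum runs over $k\to+\infty$ and $(c_qd_q)$ is merely an $\ell^r$ sequence with no geometric decay, the series converges only when $s_1+s_2>\frac1p$ --- strictly stronger than the hypothesis $s_1+s_2>0$. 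This loss is not cosmetic for the paper: Lemma \ref{moser} is applied with $s_1=s-2$, $s_2=s-1$, where $s_1+s_2=2s-3$ can well be $\leq\frac1p$ (e.g.\ $p=2$ and $s$ close to $\frac32$). The standard fix, for $p\geq2$, is to apply H\"older with \emph{both} factors in $L^p$, so that $\|R_q\|_{L^{p/2}}\leq\|\Delta_qf\|_{L^p}\|\Delta_qg\|_{L^p}=2^{-q(s_1+s_2)}c_qd_q$, and only then use Bernstein on the output block $\Delta_jR_q$, whose spectrum lives at scale $2^j$, to return from $L^{p/2}$ to $L^p$ at cost $2^{j/p}$ rather than $2^{q/p}$. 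The exponent bookkeeping then gives
\begin{equation*}
2^{js_1}\|\Delta_jR(f,g)\|_{L^p}\lesssim 2^{j(\frac1p-s_2)}\sum_{q\geq j-N_0}2^{(j-q)(s_1+s_2)}c_qd_q\lesssim\sum_{q\geq j-N_0}2^{(j-q)(s_1+s_2)}c_qd_q,
\end{equation*}
where the kernel $2^{k(s_1+s_2)}\mathbf{1}_{k\leq N_0}$ is summable exactly when $s_1+s_2>0$, and Young's inequality for sequences concludes. (For $p<2$ this trick is unavailable, which is why Danchin's original statement requires $s_1+s_2>\max(0,\tfrac2p-1)$ in dimension one; the version quoted in the paper is accurate only for $p\geq2$.)
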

If we take
$$
s_1=s-2 \leq \frac{1}{p}< s_2=s-1,
$$
then the last two terms in \eqref{3.11} can be estimated as follows
\begin{equation*}
\begin{split}
&\left\|\frac{\gamma(t')}{2}\partial_xp*[(\sigma^{(n+m)}+\sigma^{(n)})(\sigma^{(n+m)}- \sigma^{(n)})]\right\|_{B^{s-1}_{p,r}}\\
&\quad \leq C|\gamma(t')|(\|\sigma^{(n+m)}\|_{B^{s-1}_{p,r}}+\|\sigma^{(n)}\|_{B^{s-1}_{p,r}})\|\sigma^{(n+m)}- \sigma^{(n)}\|_{B^{s-2}_{p,r}}\\
&\quad \leq C|\gamma(t')|\|\sigma^{(n+m)}- \sigma^{(n)}\|_{B^{s-2}_{p,r}},
\end{split}
\end{equation*}
and
\begin{equation*}
\begin{split}
&\left\|\frac{\beta(t')-3\alpha(t')}{2}\partial_xp*[(u_x^{(n+m)}+u_x^{(n)})(u^{(n+m)}_x- u^{(n)}_x)]\right\|_{B^{s-1}_{p,r}}\\
&\quad \leq \frac{|\beta(t')-3\alpha(t')|}{2}(\|u^{(n+m)}\|_{B^{s}_{p,r}}+\|u^{(n)}\|_{B^{s}_{p,r}})\|u^{(n+m)}_x- u^{(n)}_x\|_{B^{s-2}_{p,r}}\\
&\quad \leq C |\beta(t')-3\alpha(t')| \|u^{(n+m)}- u^{(n)}\|_{B^{s-1}_{p,r}}.
\end{split}
\end{equation*}

$\bullet$ If $s >2+\frac{1}{p}$, the space $B^{s-2}_{p,r}$ is a Banach algebra, then it is not difficult to obtain the similar estimates by using the uniform bound  \eqref{3.8}.

As a consequence, we deduce from  \eqref{3.11} that
\begin{equation}\label{3.12}
\begin{split}
&\|(u^{(n+1)}-u^{(n+m+1)})(t)\|_{B^{s-1}_{p,r}} \\
&\quad \leq e^{C\int_0^t|\alpha(t')|dt'}\bigg(\|S_{n+1}u_0-S_{n+m+1}u_0\|_{B^{s-1}_{p,r}}  \\
&\quad \quad  + \int_0^t\Big((|\alpha(t')|+|\beta(t')|)\|u^{(n+m)}- u^{(n)}\|_{B^{s-1}_{p,r}} +|\gamma(t')|\|\sigma^{(n+m)}- \sigma^{(n)}\|_{B^{s-2}_{p,r}}\Big) dt' \bigg).
\end{split}
\end{equation}

Next, we apply the Lemma \ref{lem:priorieti} to  \eqref{3.10} to find
\begin{equation}\label{3.13}
\begin{split}
&\|(\sigma^{(n+1)}-\sigma ^{(n+m+1)})(t)\|_{B^{s-2}_{p,r}} \leq e^{C\int_0^t|\xi(t')|\|u^{(n)}\|_{B^{s}_{p,r}}dt'}\bigg(\|S_{n+1}\sigma_0-S_{n+m+1}\sigma_0\|_{B^{s-2}_{p,r}}\\
&\quad + \int_0^t |\xi(t')|\Big(\| u^{(n)}-u^{(n+m)}\|_{B^{s-1}_{p,r}}\|\sigma_x^{(n+m+1)}\|_{B^{s-2}_{p,r}}\\
&\quad+\|\sigma^{(n+m)}- \sigma^{(n)}\|_{B^{s-2}_{p,r}}\| u_x^{(n+m)}\|_{B^{s-1}_{p,r}} +\|\sigma^{(n)}\|_{B^{s-1}_{p,r}}\| u^{(n+m)}_x- u^{(n)}_x\|_{B^{s-2}_{p,r}}\Big)\bigg)\\
&\leq e^{C\int_0^t|\xi(t')dt'}\bigg(\|S_{n+1}\sigma_0-S_{n+m+1}\sigma_0\|_{B^{s-2}_{p,r}}\\
&\quad + \int_0^t |\xi(t')|\Big(\| u^{(n)}-u^{(n+m)}\|_{B^{s-1}_{p,r}} +\|\sigma^{(n+m)}- \sigma^{(n)}\|_{B^{s}_{p,r}}\Big)dt'\bigg)
\end{split}
\end{equation}
According to the definition of the Littlewood-Paley blocks $\Delta_p$ and the almost orthogonal property $\Delta_i\Delta_j=0$ for $|i-j|\geq 2$, we have
\begin{equation}\label{3.14}
\begin{split}
 \|S_{n+1}u_0-S_{n+m+1}u_0\|_{B^{s-1}_{p,r}}^r& =\sum_{j\geq -1}2^{jr(s-1)}\left\|\Delta_j\sum_{n+1\leq i \leq n+m}\Delta_i u_0\right\|_{L^p}^r\\
 &\leq C\sum_{n\leq j \leq n+m+1}\left(2^{-jr}2^{jrs}\sum_{n+1\leq i \leq n+m}\left\|\Delta_j \Delta_iu_0\right\|_{L^2}^r\right)\\
 &\leq C2^{-rn}\sum_{n\leq j \leq n+m+1}2^{jrs}\left\|\Delta_j u_0\right\|_{L^2}^r\leq C2^{-rn} \|u_0\|_{B_{p,r}^{s}}^r,
\end{split}
\end{equation}
and similarly,
\begin{equation}\label{3.15}
\begin{split}
\|S_{n+1}\sigma_0-S_{n+m+1}\sigma_0\|_{B^{s-2}_{p,r}}^r\leq C2^{-rn} \|\sigma_0\|_{B_{p,r}^{s-1}}^r.
\end{split}
\end{equation}
For simplicity, we set
$$
\mathcal {H}^{(n,m)}(t)\overset{\text{def}}{=} \|(u^{(n)}-u^{(n+m)})(t)\|_{B^{s-1}_{p,r}}+\|(\sigma^{(n)}-\sigma ^{(n+m)})(t)\|_{B^{s-2}_{p,r}}.
$$
Putting the estimates  \eqref{3.12}- \eqref{3.15} together and using the inequality $e^x+e^y\leq 2 e^{x+y}$ for any $x,y\geq 0$,  we obtain
\begin{equation}
\begin{split}
\mathcal {H}^{(n+1,m)}(t) \leq & C\bigg(2^{-n} + \int_0^t(|\alpha(t')|+|\beta(t')|+|\gamma(t')|+|\xi(t')|)\mathcal {H}^{(n,m)}(t')dt'\bigg).
\end{split}
\end{equation}
To derive an estimate for $\mathcal {H}^{(n+1,m)}(t)$, we first prove the following useful lemma.
\begin{lemma} \label{lem:3.1}
Let $\{a_n\}$ be a positive constant, and the nonnegative function $\mu(t)\in L^1(0,\infty)$. Assume that the sequence of nonnegative functions $(g_n(t))_{n\geq1}$ satisfies the inequality
$$
g_{n+1}(t)\leq a_n+\int_0^t \mu(t') g_n(t')dt',
$$
where $g_0$ is a nonnegative number. Then we have
$$
\sup_{t\in [0,\infty)}g_{n+1}(t) \leq \sum_{k=0}^{n}\frac{a_{n-k}\|\mu\|_{L^1(0,\infty)}^k }{k!}+ \frac{g_{0}\|\mu\|_{L^1(0,\infty)}^{n+1} }{(n+1)!}.
$$
\end{lemma}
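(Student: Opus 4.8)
The plan is to first establish a sharper \emph{pointwise} bound and only afterwards pass to the supremum. Writing $\Phi(t):=\int_0^t\mu(t')\,dt'$, I would prove by induction on $n$ that
$$
g_{n+1}(t)\le \sum_{k=0}^{n}\frac{a_{n-k}\,\Phi(t)^k}{k!}+\frac{g_0\,\Phi(t)^{n+1}}{(n+1)!}.
$$
Since $\mu\ge 0$, the function $\Phi$ is nondecreasing with $\Phi(t)\le\|\mu\|_{L^1(0,\infty)}$ for every $t$, and the right-hand side is a polynomial in $\Phi(t)$ with nonnegative coefficients (because $a_{n-k},g_0\ge 0$). Hence replacing $\Phi(t)$ by $\|\mu\|_{L^1(0,\infty)}$ can only enlarge it, which yields the asserted bound on $\sup_{t\in[0,\infty)}g_{n+1}(t)$ at once.

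For the base case $n=0$ I would use the hypothesis directly: $g_1(t)\le a_0+\int_0^t\mu(t')g_0\,dt'=a_0+g_0\Phi(t)$, which is exactly the claimed formula. For the inductive step I would substitute the induction hypothesis for $g_n$ into the recursion for $g_{n+1}$ and integrate term by term. The single computation that drives everything is the elementary identity
$$
\int_0^t\mu(t')\,\Phi(t')^k\,dt'=\int_0^t\Phi'(t')\,\Phi(t')^k\,dt'=\frac{\Phi(t)^{k+1}}{k+1},
$$
valid because $\Phi'=\mu$ and $\Phi(0)=0$. Applied to each summand $a_{n-1-k}\Phi(t')^k/k!$, it raises the power of $\Phi$ by one and converts $k!$ into $(k+1)!$; after the reindexing $j=k+1$ the sum becomes $\sum_{j=1}^{n}a_{n-j}\Phi(t)^j/j!$, and the free constant $a_n$ supplies precisely the missing $j=0$ term. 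The $g_0$-term is transformed from $g_0\Phi(t)^n/n!$ into $g_0\Phi(t)^{n+1}/(n+1)!$, closing the induction.

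The only place I expect to need care is the index bookkeeping in the inductive step: I must check that the shift $k\mapsto k+1$ together with the leading constant $a_n$ reconstitutes the full range $0\le j\le n$ of the target sum and that the factorials line up. Everything else is routine, since the nonnegativity of $\mu$, $a_n$ and $g_0$ ensures all intermediate quantities stay nonnegative, so no sign issues arise when inserting the induction hypothesis under the integral, and the final passage from the $t$-dependent estimate to the supremum estimate is immediate by the monotonicity noted above.

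As an alternative I could instead unfold the recursion all the way down to $g_0$, producing iterated integrals of $\mu$ over simplices and invoking the standard identity $\int_{0\le t_k\le\cdots\le t_1\le t}\mu(t_1)\cdots\mu(t_k)\,dt=\tfrac1{k!}\Phi(t)^k$; this gives the same conclusion but forces me to handle the $(n+1)$-fold iterated integral explicitly, so the induction above is the shorter route.
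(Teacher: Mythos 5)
Your proof is correct, and it takes a genuinely different route from the paper. The paper proves the lemma "from the top down": it repeatedly substitutes the recursion into itself, which produces nested integrals whose inner variable sits in the \emph{lower} limit, and it then evaluates these via the identity
\begin{equation*}
\int_0^t \mu(t_1)\left(\int_{t_1}^t\mu(t_2)\,dt_2\right)^k dt_1=\frac{1}{k+1}\left(\int_0^t\mu(t_2)\,dt_2\right)^{k+1},
\end{equation*}
established there by an integration-by-parts style computation; after "following the similar procedure for several times" (an informal induction) it lands on the same polynomial-in-$\Phi(t)$ bound you obtain, and finally bounds $\Phi(t)\le\|\mu\|_{L^1(0,\infty)}$. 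Your approach instead runs a clean forward induction on the closed-form \emph{pointwise} bound, so the only computation needed is the simpler identity $\int_0^t\mu\,\Phi^k=\Phi^{k+1}/(k+1)$ (valid since $\Phi$ is absolutely continuous with $\Phi'=\mu$ a.e.), and the bookkeeping reduces to a single index shift $k\mapsto k+1$ absorbed by the free constant $a_n$. What your route buys is rigor and economy: it replaces the paper's informal "repeat the procedure" step with a genuine induction and avoids tracking integrals over simplices entirely; it is essentially the argument the paper's proof is implicitly gesturing at, done properly. Conversely, the paper's unfolding makes visible the iterated-integral (simplex) structure $\frac{1}{k!}\Phi(t)^k$ that explains \emph{why} factorials appear, which your induction delivers but does not illuminate. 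Amusingly, the alternative you mention and reject at the end — unfolding down to $g_0$ with iterated integrals — is precisely the paper's proof.
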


\begin{proof} Using the iterative inequality, we first have
\begin{equation}\label{3.17}
\begin{split}
g_{n+1}(t)&\leq a_n+\int_0^t \mu(t_2) \left(a_{n-1}+\int_0^{t_2} \mu(t_1) g_{n-1}(t_1)dt_1\right)dt_2\\
&\leq a_n+a_{n-1}\int_0^t \mu(t_2)dt_2+ \int_0^t \int_0^{t_2} \mu(t_1) g_{n-1}(t_1)\mu(t_2)dt_1dt_2\\
&= a_n+a_{n-1}\int_0^t \mu(t_2)dt_2+ \int_0^t \mu(t_1)\left(\int^t_{t_1}\mu(t_2)dt_2 \right)g_{n-1}(t_1)dt_1.
\end{split}
\end{equation}
Inserting the inequality  $g_{n-1}(t_1)\leq a_{n-2}+\int_0^{t_1} \mu(t_3) g_{n-2}(t_3)dt_3$ into  \eqref{3.17}, we have
\begin{equation}\label{3.18}
\begin{split}
g_{n+1}(t) &\leq  a_n+a_{n-1}\int_0^t \mu(t_2)dt_2+ a_{n-2}\int_0^t \mu(t_1)\left(\int^t_{t_1}\mu(t_2)dt_2 \right)dt_1\\
&\quad + \int_0^t \mu(t_1)\left(\int^t_{t_1}\mu(t_2)dt_2 \right)\left(\int_0^{t_1} \mu(t_3) g_{n-2}(t_3)dt_3\right)dt_1.
\end{split}
\end{equation}
For the last two terms on the right hand side of  \eqref{3.18}, we have
\begin{equation*}
\begin{split}
\int_0^t \mu(t_1)\left(\int^t_{t_1}\mu(t_2)dt_2 \right)dt_1 =\frac{1}{2}\left(\int^t_{0}\mu(t_2)dt_2\right)^2,
\end{split}
\end{equation*}
and
\begin{equation*}
\begin{split}
&\int_0^t \mu(t_1)\left(\int^t_{t_1}\mu(t_2)dt_2 \right)\left(\int_0^{t_1} \mu(t_3) g_{n-2}(t_3)dt_3\right)dt_1\\
&\quad=-\frac{1}{2}\int_0^t \left(\int_0^{t_1} \mu(t_3) g_{n-2}(t_3)dt_3\right)d_{t_1}\left[\left(\int^t_{t_1}\mu(t_2)dt_2\right)^2\right]\\
&\quad=\frac{1}{2}\int_0^t \mu(t_1)\left(\int^t_{t_1}\mu(t_2)dt_2\right)^2 g_{n-2}(t_1)dt_1.
\end{split}
\end{equation*}
Hence we get
\begin{equation}\label{3.19}
\begin{split}
g_{n+1}(t)\leq&  a_n+a_{n-1}\int_0^t \mu(t_2)dt_2+ \frac{a_{n-2}}{2}\left(\int^t_{0}\mu(t_2)dt_2\right)^2\\
&+ \frac{1}{2}\int_0^t \mu(t_1)\left(\int^t_{t_1}\mu(t_2)dt_2\right)^2 g_{n-2}(t_1)dt_1.
\end{split}
\end{equation}
For the integrand  $g_{n-2}(t_1)$ in  \eqref{3.19}, we use the iterative inequality again and obtain
\begin{equation}\label{3.20}
\begin{split}
g_{n+1}(t)\leq&  a_n+a_{n-1}\int_0^t \mu(t_2)dt_2+ \frac{a_{n-2}}{2}\left(\int^t_{0}\mu(t_2)dt_2\right)^2+ \frac{a_{n-3}}{2}\int_0^t \mu(t_1)\left(\int^t_{t_1}\mu(t_2)dt_2\right)^2 dt_1\\
&+ \frac{1}{2}\int_0^t \mu(t_1)\left(\int^t_{t_1}\mu(t_2)dt_2\right)^2 \left(\int_0^{t_1} \mu(t_4) g_{n-3}(t_4)dt_4\right)dt_1\\
=&  \sum_{k=0}^3\frac{a_{n-k}}{k!}\left(\int^t_{0}\mu(t_2)dt_2\right)^k + \frac{1}{3!}\int_0^t \mu(t_1)\left(\int^t_{t_1}\mu(t_2)dt_2\right)^3  g_{n-3}(t_1)  dt_1.
\end{split}
\end{equation}
Following the similar procedure for several times, we finally obtain
\begin{equation*}
\begin{split}
g_{n+1}(t)&\leq \sum_{k=0}^{n}\frac{a_{n-k}}{k!}\left(\int^t_{0}\mu(t_2)dt_2\right)^k + \frac{g_{0} }{n!}\int_0^t \mu(t_1)\left(\int^t_{t_1}\mu(t_2)dt_2\right)^{n}  dt_1\\
&=\sum_{k=0}^{n}\frac{a_{n-k}}{k!}\left(\int^t_{0}\mu(t_2)dt_2\right)^k + \frac{g_{0} }{(n+1)!}\left(\int^t_{0}\mu(t_2)dt_2\right)^{n+1}\\
&\leq \sum_{k=0}^{n}\frac{a_{n-k}\|\mu\|_{L^1(0,\infty)}^k }{k!}+ \frac{g_{0}\|\mu\|_{L^1(0,\infty)}^{n+1} }{(n+1)!}.
\end{split}
\end{equation*}
The proof of Lemma \ref{lem:3.1} is completed.
\end{proof}

By taking $a_n=C2^{-n}$ and $\mu(t)=C(|\alpha(t)|+|\beta(t)|+|\gamma(t)|+|\xi(t)|)$, we get from Lemma \ref{lem:3.1} that
\begin{equation*}
\begin{split}
\sup_{t\in [0,\infty)}\mathcal {H}^{(n+1,m)}(t) \leq \frac{1}{2^n}\sum_{k=0}^{n}\frac{ C}{k!}\left(\frac{\ln 2}{3C^2 h(H_0) }\right)^k+ \frac{g_{0} }{(n+1)!}\left(\frac{\ln 2}{6C^2 h(H_0) }\right)^{n+1},
\end{split}
\end{equation*}
which implies that
\begin{equation*}
\begin{split}
\lim_{n\rightarrow\infty}\sup_{t\in [0,\infty)}\mathcal {H}^{(n+1,m)}(t)=0.
\end{split}
\end{equation*}
Therefore, the sequence $(u^{(n)},\sigma^{(n)})_{n\geq1}$ converges strongly in the Banach space $ C([0,\infty); B^{s-1}_{p,r}\times B^{s-2}_{p,r})$, and we denote the limit function  by $(u,\sigma)$.

\vskip3mm\noindent
\textbf{Step 3 (Existence):}  In this step we shall verify that the limit function $(u,\sigma)$ indeed belongs to $E_{p,r}^s(\infty)$ and is a strong solution to the system \eqref{1.6}. Since the sequence $(u^{(n)},\sigma^{(n)})_{n\geq1}$ is uniformly bounded in $L^\infty([0,\infty); B^{s}_{p,r}\times B^{s-1}_{p,r})$, and $(u^{(n)},\sigma^{(n)})\rightarrow (u,\sigma)$ strongly in $B^{s-1}_{p,r}\times B^{s-2}_{p,r}\hookrightarrow \mathcal {S}'\times \mathcal {S}'$ as $n\rightarrow\infty$, it follows from the Fatou's lemma (cf. \cite{64}) that $(u,\sigma)\in L^\infty([0,\infty); B^{s}_{p,r}\times B^{s-1}_{p,r})$.

On the other hand, as $(u^{(n)},\sigma^{(n)})_{n\geq1}$ converges strongly to $(u,\sigma)$ in $ C([0,\infty); B^{s-1}_{p,r}\times B^{s-2}_{p,r})$, an interpolation argument insures that the convergence holds in $ C([0,\infty); B^{s'}_{p,r}\times B^{s'-1}_{p,r})$, for any $s'<s$. It is then easy to pass to the limit in the system \eqref{3.1} and to conclude that $(u,\sigma)$ is indeed a solution to \eqref{1.6}.  Thanks to the fact that $(u, \sigma)$ belongs to
$C([0,\infty); B^{s}_{p,r}\times B^{s-1}_{p,r})$, we know that the right-hand side of the equation
\begin{equation*}
 u_t+\alpha(t) uu_x=-\partial_xp*\left(\frac{\beta(t)}{2} u^2+\frac{\gamma(t)}{2} \sigma^2+\frac{\beta(t)-3\alpha(t)}{2}u_x^2\right),
\end{equation*}
belongs to $L^1([0,\infty); B^{s}_{p,r})$, and the right-hand side of the equation
\begin{equation*}
\sigma_t+\xi(t)u\sigma_x=-\xi(t)\sigma u_x,
\end{equation*}
 belongs to $L^1([0,\infty); B^{s-1}_{p,r})$. In particular, for the case $r < \infty$, applying Lemma \ref{lem:transwell} implies that $ (u,\sigma)\in C([0,\infty); B^{s'}_{p,r}\times B^{s'-1}_{p,r})$, for any $s'<s$. Finally, using the system \eqref{1.6} again, we see that $ (\partial_tu,\partial_t\sigma)\in C([0,\infty); B^{s'-1}_{p,r}\times B^{s'-2}_{p,r})$ for $r < \infty$, and in $L^\infty([0,\infty); B^{s-1}_{p,r}\times B^{s-2}_{p,r})$ otherwise. Therefore, $(u, \sigma)$ belongs to $E_{p,r}^s(\infty)$. Moreover, a standard use of a sequence of viscosity approximate solutions $(u_\epsilon, \sigma_\epsilon )_{\epsilon>0}$ for system \eqref{1.6} which converges uniformly in
$$
C([0,\infty); B^{s}_{p,r}\times B^{s-1}_{p,r})\cap C^1([0,\infty); B^{s-1}_{p,r}\times B^{s-2}_{p,r})
$$
gives the continuity of solution $(u, \sigma)$ in  $E_{p,r}^s(\infty)$.

\vskip3mm\noindent
\textbf{Step 4 (Uniqueness and stability):} Let $(u^{(i)},\sigma^{(i)})$ is the solution to system \eqref{1.6} with respect to the initial data $(u^{(i)}_0,\sigma^{(i)}_0)$, $i=1,2$. Setting
$$
u^{(12)}= u^{(1)}-u^{(2)},\quad \sigma^{(12)}=\sigma^{(1)}-\sigma^{(2)}.
$$
Then the pair $(u^{(12)},\sigma^{(12)})$ satisfies the following system
\begin{equation} \label{3.22}
\begin{cases}
 \partial_t u^{(12)}+\alpha(t) u^{(1)}\partial_x u^{(12)}=-\alpha(t)u^{(12)}u_x^{(2)} +f(u^{(12)},u^{(12)}, u^{(1)}+ u^{(2)}, \sigma^{(1)}+ \sigma^{(2)}),\\
\partial_t\sigma^{(12)}+\xi(t)u^{(1)}\partial_x\sigma^{(12)}=-\xi(t)u^{(12)}\sigma_x^{(2)}-\xi(t)\sigma^{(12)}u_x^{(2)} -\xi(t)\sigma^{(1)}u^{(12)}_x,\\
u^{(12)}(0,x)\overset{\text{def}}{=} u^{(12)}_0=u^{(1)}_0-u^{(2)}_0,\\
\sigma^{(12)}(0,x)\overset{\text{def}}{=} \sigma^{(12)}_0=\sigma^{(1)}_0-\sigma^{(2)}_0,
\end{cases}
\end{equation}
where
\begin{equation*}
\begin{split}
&f(u^{(12)},u^{(12)}, u^{(1)}+ u^{(2)}, \sigma^{(1)}+ \sigma^{(2)})\\
&\quad \overset{\text{def}}{=}-\frac{\beta(t)}{2}\partial_xp*[u^{(12)}(u^{(2)}+u^{(1)})]-\frac{\gamma(t)}{2}\partial_xp*[\sigma^{(12)}(\sigma^{(2)}+\sigma^{(1)})] \\
&\quad \quad-\frac{\beta(t)-3\alpha(t)}{2}\partial_xp*[u^{(12)}_x(u_x^{(2)}+u_x^{(1)})].
\end{split}
\end{equation*}
According to Lemma \ref{lem:priorieti}, we have the following estimates for \eqref{3.22}:
\begin{equation}\label{3.23}
\begin{split}
&e^{-C\int_0^t|\alpha(t')|\|u^{(1)}(t')\|_{B^{s}_{p,r}}dt'}|u^{(12)}(t)\|_{B^{s-1}_{p,r}} \\
&\quad  \leq\|u^{(12)}_0\|_{B^{s-1}_{p,r}}+ \int_0^t e^{-C\int_0^{t'}|\alpha(\tau)|\|u^{(1)}(\tau)\|_{B^{s}_{p,r}}d\tau} \\
&\quad \quad\times\bigg(\|\alpha(t')u^{(12)}u_x^{(2)}\|_{B^{s-1}_{p,r}} +\|f(u^{(12)},u^{(12)}, u^{(1)}+ u^{(2)}, \sigma^{(1)}+ \sigma^{(2)})\|_{B^{s-1}_{p,r}} \bigg)dt' ,
\end{split}
\end{equation}
and
\begin{equation}\label{3.24}
\begin{split}
&e^{-C\int_0^t|\xi(t')|\|u^{(1)}(t')\|_{B^{s}_{p,r}}dt'}\|\sigma^{(12)}(t)\|_{B^{s-2}_{p,r}}  \\
&\quad \leq \|\sigma^{(12)}_0\|_{B^{s-2}_{p,r}}+ \int_0^t e^{-C\int_0^{t'}|\xi(\tau)| \|u^{(1)}(\tau)\|_{B^{s}_{p,r}}d\tau}\\
&\quad \quad \times\bigg(\|\xi(t')u^{(12)}\sigma_x^{(2)}\|_{B^{s-2}_{p,r}}+\|\xi(t')\sigma^{(12)}u_x^{(2)}\|_{B^{s-2}_{p,r}} +\|\xi(t')\sigma^{(1)}u^{(12)}_x\|_{B^{s-2}_{p,r}} \bigg)dt' .
\end{split}
\end{equation}
Since $B^{s-1}_{p,r}$ is a Banach algebra for any $s>1+\frac{1}{p}$, so we have
\begin{equation*}
\begin{split}
\|\alpha(t')u^{(12)}u_x^{(n+m+1)}\|_{B^{s-1}_{p,r}}&\leq C| \alpha(t')|\|u_x^{(n+m+1)}\|_{B^{s-1}_{p,r}}\|u^{(12)}\|_{B^{s-1}_{p,r}} ,\\
\|\frac{\beta(t')}{2}\partial_xp*[(u^{(n+m)}+u^{(n)})u^{(12)}]\|_{B^{s-1}_{p,r}} &\leq C|\beta(t')| \|u^{(n+m)}+u^{(n)}\|_{B^{s}_{p,r}} \|u^{(12)} \|_{B^{s-1}_{p,r}}, \\
\|\frac{\beta(t')-3\alpha(t')}{2}\partial_xp*[(u_x^{(n+m)}+u_x^{(n)})u^{(12)}_x]\|_{B^{s-1}_{p,r}}&\leq C|\beta(t')-3\alpha(t')|\|u_x^{(n+m)}+u_x^{(n)})\|_{B^{s-1}_{p,r}}\|u^{(12)}_x\|_{B^{s-2}_{p,r}},\\
\|\frac{\gamma(t')}{2}\partial_xp*[(\sigma^{(n+m)}+\sigma^{(n)})\sigma^{(12)}] \|_{B^{s-1}_{p,r}}&\leq C|\gamma(t')|\|\sigma^{(n+m)}+\sigma^{(n)}\|_{B^{s-1}_{p,r}}\|\sigma^{(12)}\|_{B^{s-2}_{p,r}}.
\end{split}
\end{equation*}
Therefore, one can deduce that
\begin{equation*}
\begin{split}
&\|f(u^{(12)},u^{(12)}, u^{(1)}+ u^{(2)}, \sigma^{(1)}+ \sigma^{(2)})\|_{B^{s-1}_{p,r}}\\
&\quad\leq C  (|\alpha(t')|+|\beta(t')|+|\gamma(t')|) (\|u^{(1)}\|_{B^{s-1}_{p,r}}+\|u^{(2)}\|_{B^{s-1}_{p,r}}+\|\sigma^{(1)}\|_{B^{s-1}_{p,r}}+\|\sigma^{(2)}\|_{B^{s-1}_{p,r}})\\
&\quad\quad\times(\|u^{(12)}\|_{B^{s-1}_{p,r}}+ \|\sigma^{(12)}\|_{B^{s-2}_{p,r}})
\end{split}
\end{equation*}
For $\max\{1+\frac{1}{p},\frac{3}{2}\}<s\leq 2+\frac{1}{p}$, we get by using the Lemma \ref{lem:priorieti}
\begin{equation*}
\begin{split}
 \|\xi(t')u^{(12)}\sigma_x^{(2)}\|_{B^{s-2}_{p,r}} &\leq C| \xi(t')|\|u^{(12)}\|_{B^{s-2}_{p,r}}\|\sigma_x^{(2)}\|_{B^{s-2}_{p,r}} ,\\
 \|\xi(t')\sigma^{(12)}u_x^{(2)}\|_{B^{s-2}_{p,r}}&\leq C| \xi(t')|\|\sigma^{(12)}\|_{B^{s-2}_{p,r}}\|u_x^{(2)}\|_{B^{s-1}_{p,r}},\\
 \|\xi(t')\sigma^{(1)}u^{(12)}_x\|_{B^{s-2}_{p,r}}&\leq C| \xi(t')|\|\sigma^{(1)}\|_{B^{s-1}_{p,r}}\|u^{(12)}_x\|_{B^{s-2}_{p,r}}.
\end{split}
\end{equation*}
Similar estimates also hold for the case of $s> 2+\frac{1}{p}$. Putting above estimates together, we get from \eqref{3.23} and \eqref{3.24} that
\begin{equation*}
\begin{split}
&e^{-C\int_0^t(|\alpha(t')|+|\xi(t')|)\|u^{(1)}(t')\|_{B^{s}_{p,r}}dt'}(\|u^{(12)}(t)\|_{B^{s-1}_{p,r}} +\|\sigma^{(12)}(t)\|_{B^{s-2}_{p,r}})\\
&\quad  \leq \|u^{(12)}_0\|_{B^{s-1}_{p,r}}+|\sigma^{(12)}_0\|_{B^{s-2}_{p,r}}+ \int_0^t e^{-C\int_0^{t'}(|\alpha(\tau)|+|\xi(\tau)|)\|u^{(1)}(\tau)\|_{B^{s}_{p,r}}d\tau} \\
&\quad \quad\times(\|u^{(12)}(t')\|_{B^{s-1}_{p,r}}+ \|\sigma^{(12)}(t')\|_{B^{s-2}_{p,r}})(|\alpha(t')|+|\beta(t')|+|\gamma(t')|+|\xi(t')|)\\
&\quad \quad\times(\|u^{(1)}\|_{B^{s-1}_{p,r}}+\|u^{(2)}\|_{B^{s-1}_{p,r}}+\|\sigma^{(1)}\|_{B^{s-1}_{p,r}}+\|\sigma^{(2)}\|_{B^{s-1}_{p,r}})dt' ,
\end{split}
\end{equation*}
Applying the Gronwall lemma to above inequality leads to
\begin{equation*}
\begin{split}
&\|u^{(12)}(t)\|_{B^{s-1}_{p,r}} +\|\sigma^{(12)}(t)\|_{B^{s-2}_{p,r}}\leq (\|u^{(12)}_0\|_{B^{s-1}_{p,r}}+|\sigma^{(12)}_0\|_{B^{s-2}_{p,r}})\\
&\quad \times e^{C\int_0^t(|\alpha(t')|+|\beta(t')|+|\gamma(t')|+|\xi(t')|)(\|u^{(1)}\|_{B^{s-1}_{p,r}}+\|u^{(2)}\|_{B^{s-1}_{p,r}}+\|\sigma^{(1)}\|_{B^{s-1}_{p,r}}+\|\sigma^{(2)}\|_{B^{s-1}_{p,r}})dt'},
\end{split}
\end{equation*}
which implies the uniqueness and continuity with respect to the initial data. The proof of Theorem \ref{theorem1} is now completed.

\section{Blow-up phenomena}
In this section, we derive a precise blow-up criteria of strong solutions to the system \eqref{1.6} (when $\beta(t)=3\alpha(t)$) with initial data in Sobolev spaces, and gives specific characterization for the lower bound of the blow-up time. Additionally, with sufficient conditions on the time-dependent parameters $\alpha,\gamma$ and $\xi$, we provide a precise wave-breaking criteria for the strong solution, which is shown to be independent of the second component.

In the case of $\beta(t)=3\alpha(t)$, the system \eqref{1.5} reduces to the following form:
\begin{equation} \label{4.9}
\begin{cases}
 m_t+\alpha(t)um_x+3\alpha(t)u_xm +\gamma(t)\sigma\sigma_x=0,& t>0,~x\in \mathbb{K},\\
 \sigma_t+\xi(t)(u\sigma)_x=0,& t>0,~x\in \mathbb{K},\\
 m(0,x)=m_0(x),\quad
 \sigma(0,x)=\sigma(x),&  x\in \mathbb{K},
\end{cases}
\end{equation}
If the parameters $\alpha,\gamma$ an $\xi$ are merely locally integrable, then Theorem \ref{theorem1} implies that, given $(u_0,\sigma_0)\in H^s\times H^{s-1}$ ($s>\frac{3}{2}$), the system \eqref{4.9} has  a unique  local-in-time solution $(u,\sigma)\in C([0,T],H^s\times H^{s-1})$ for some $T>0$.

Now we consider the flow $t\rightarrow \psi (t,x)$ generated by $\xi(t)u(t,x)$:
\begin{equation*}
\begin{cases}
 \frac{d \psi(t,x)}{dt}=\xi(t)u(t,\psi(t,x)),\\
\psi(0,x)=x,
\end{cases} t\geq0,~~x\in \mathbb{K}.
\end{equation*}
By the regularity of $u(t,x)$ in $x$-variable and the classic ODE theory,  the previous initial value problem has a unique solution $ \psi\in C^1([0,T);\mathbb{R})$. Moreover, direct calculation leads to
\begin{equation*}
\begin{cases}
 \frac{d \psi_x(t,x)}{dt}=\xi(t)u_x(t,\psi(t,x))\psi_x(t,x),\\
\psi_x(0,x)=1,
\end{cases} t\geq0,~~x\in \mathbb{K}.
\end{equation*}
Hence for any $t\in [0,T)$, $x \in \mathbb{K}$, we find that
\begin{equation}
\psi_x(t,x)=e^{ \int_0^t\xi(t')u_x(t',\psi(t',x))dt'}>0,
\end{equation}
which implies that $\psi(t,\cdot): \mathbb{K}\rightarrow \mathbb{K}$ is a diffeomorphism on $\mathbb{K}$ for every $t\in [0,T)$.

Using the second component of \eqref{1.6}, it is easy to find that
\begin{equation*}
\begin{split}
&\frac{d}{dt}( \sigma(t,\psi(t,x))\psi_x(t,x))\\
&\quad =\sigma_t(t,\psi(t,x))\psi_x(t,x)+\sigma_x(t,\psi(t,x))\psi_t(t,x)\psi_x(t,x)+\sigma(t,\psi(t,x))\psi_{xt}(t,x)\\
&\quad =[\sigma_t +\xi(t)\sigma_x u + \xi(t)\sigma u_x ](t,\psi(t,x)) \psi_{x}(t,x) =0,
\end{split}
\end{equation*}
which implies
\begin{equation*}
\begin{split}
 \sigma(t,\psi(t,x))\psi_x(t,x)=\sigma(0,\psi(0,x))\psi_x(0,x)=\sigma_0(x).
\end{split}
\end{equation*}
As a result, we have the following $L^\infty$-bound
\begin{equation} \label{bound}
\begin{split}
 \|\sigma(t,\cdot)\|_{L^\infty}=& \|\sigma(t,\psi(t,\cdot))\|_{L^\infty}\\
 \leq& \|\sigma_0 \|_{L^\infty} \left\|e^{-\int_0^t\xi(t')u_x(t',\psi(t',\cdot))dt'}\right\|_{L^\infty}\\
 \leq& \|\sigma_0 \|_{L^\infty} e^{-\int_0^t\inf_{x\in \mathbb{K}} \{\xi(t') u_x(t',x)\}dt'}.
\end{split}
\end{equation}
and the $L^2$-bound
\begin{equation} \label{bound1}
\begin{split}
 \|\sigma(t,\cdot)\|_{L^2} =&\left(\int_ \mathbb{K} |\sigma(t,\psi(t,x))|^2\psi_x(t,x)dx\right)^{ \frac{1}{2}}\\
 =&\left(\int_ \mathbb{K}  |\sigma_0(x)|^2\psi_x(t,x)^{-1}dx\right)^{ \frac{1}{2}} \leq \|\sigma_0\|_{L^2} e^{-\frac{1}{2}\int_0^t\inf_{x\in \mathbb{K}} \{\xi(t')u_x(t',x)\}  dt'}.
\end{split}
\end{equation}

The following lemma provides the $L^\infty$-bound and $L^2$-bound for the first component $u$ of the solution, which is crucial in the proof of the following blow-up criteria.
\begin{lemma}
Assume that the parameters $\alpha,\gamma,\xi\in L^1_{loc}([0,\infty);\mathbb{R})$. Let $(u,\sigma)$ be the solution to the system (4.1) with initial data $(u_0,\sigma_0)\in H^s\times H^{s-1}$ with $s>\frac{3}{2}$, then we have the $L^2$-bound
\begin{equation*}
\begin{split}
\| u(t)\|_{L^2} \leq2\| u_0\|_{L^2}\exp\left\{2\|\sigma_0 \|_{H^{s-1}} ^4 \|\gamma\|_{L^1(0,t)}e^{-3\int_0^t\inf_{x\in \mathbb{K}} \{\xi(t') u_x(t',x)\}dt'}\right\},
\end{split}
\end{equation*}
and the $L^\infty$-bound
\begin{equation*}
\begin{split}
  \|u(t)\|_{L^\infty} \leq \|u_0\|_{L^\infty}+t\mathcal {J}(t ) ,
\end{split}
\end{equation*}
where
\begin{equation*}
\begin{split}
\mathcal {J}(t) &\overset{\text{def}}{=}4\| u_0\|_{L^2}^2|\alpha(t)|\exp\left\{4\|\sigma_0 \|_{H^{s-1}} ^4\|\gamma\|_{L^1(0,t)}e^{-3\int_0^t\inf_{x\in \mathbb{K}} \{\xi(t') u_x(t',x)\}dt'}\right\}\\
&\quad +\|\sigma_0\|_{L^2}^2|\gamma(t)| e^{- \int_0^t\inf_{x\in \mathbb{K}} \{\xi(t')u_x(t',x)\}  dt'} .
\end{split}
\end{equation*}
\end{lemma}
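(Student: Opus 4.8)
The plan is to obtain both estimates from elementary energy identities together with the method of characteristics, feeding in the bounds \eqref{bound} and \eqref{bound1} on $\sigma$ that are already in hand. Throughout I would use that $p=\tfrac12 e^{-|x|}$ satisfies $\|\partial_x p\|_{L^1}=1$, $\|\partial_x p\|_{L^\infty}=\tfrac12$ and $\|\partial_x p\|_{L^2}=\tfrac12$, together with Young's convolution inequality and the embedding $H^{s-1}\hookrightarrow L^\infty$ (valid since $s-1>\tfrac12$).

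For the $L^2$-bound I would test the first equation of \eqref{4.9}, written in the nonlocal form \eqref{1.6} with $\beta=3\alpha$, against $u$ and integrate in space. The transport term is antisymmetric, $\alpha\int u^2u_x\,dx=\tfrac{\alpha}{3}\int\partial_x(u^3)\,dx=0$, so that
\begin{equation*}
\frac12\frac{d}{dt}\|u\|_{L^2}^2=-\int_{\mathbb{K}} u\,\partial_x p*\Big(\tfrac{3\alpha(t)}{2}u^2+\tfrac{\gamma(t)}{2}\sigma^2\Big)\,dx.
\end{equation*}
The $\sigma$-piece is the harmless one: by Cauchy--Schwarz, Young and $\|\sigma\|_{L^4}^2\le\|\sigma\|_{L^\infty}\|\sigma\|_{L^2}$ one has $\big|\int u\,\partial_x p*\sigma^2\big|\le\|u\|_{L^2}\,\|\partial_x p\|_{L^1}\,\|\sigma\|_{L^4}^2\le\|u\|_{L^2}\|\sigma\|_{L^\infty}\|\sigma\|_{L^2}$. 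Substituting \eqref{bound} and \eqref{bound1} and replacing $\|\sigma_0\|_{L^\infty},\|\sigma_0\|_{L^2}$ by $\|\sigma_0\|_{H^{s-1}}$ produces the factor $\|\sigma_0\|_{H^{s-1}}^2\,e^{-\frac32\int_0^t\inf_x\{\xi u_x\}\,dt'}$, whose square (after one application of Young) is exactly the $\|\sigma_0\|_{H^{s-1}}^4\,e^{-3\int_0^t\inf_x\{\xi u_x\}\,dt'}$ appearing in the statement; a Gronwall step then converts this into the stated exponential, the prefactor $2$ and the constant $2$ in the exponent being artefacts of this bookkeeping.

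The genuine obstacle is the pure-$u$ term $\int u\,\partial_x p*u^2$. Since $\beta=3\alpha$ is the Degasperis--Procesi regime, $\|u\|_{L^2}$ is \emph{not} a conserved quantity, so no exact cancellation is available (indeed the integrand vanishes only for the symmetric part of $u$). By Fubini and the oddness of $\partial_x p$ one has the rewriting
\begin{equation*}
\int_{\mathbb{K}} u\,\partial_x p*u^2\,dx=-\int_{\mathbb{K}} u^2\,(\partial_x p* u)\,dx,
\end{equation*}
whence $\big|\int u\,\partial_x p*u^2\big|\le\|\partial_x p* u\|_{L^\infty}\|u\|_{L^2}^2\le\tfrac12\|u\|_{L^2}^3$. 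The delicate point I expect to cost the most effort is to absorb this cubic term into the Gronwall scheme in a way compatible with the precise form of the stated $L^2$-bound; this is exactly where the careful accounting of the modulus‑type factors must be carried out.

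Finally, for the $L^\infty$-bound I would integrate along the flow $q(t,x)$ of the field $\alpha(t)u$, namely $\dot q=\alpha(t)u(t,q)$, $q(0,x)=x$, which by the regularity of $u$ is a diffeomorphism of $\mathbb{K}$ for each $t$. Along $q$ the equation becomes $\tfrac{d}{dt}u(t,q)=-\big(\partial_x p*(\tfrac{3\alpha}{2}u^2+\tfrac{\gamma}{2}\sigma^2)\big)(t,q)$, so that
\begin{equation*}
\|u(t)\|_{L^\infty}\le\|u_0\|_{L^\infty}+\int_0^t\Big\|\partial_x p*\big(\tfrac{3\alpha}{2}u^2+\tfrac{\gamma}{2}\sigma^2\big)\Big\|_{L^\infty}\,dt'.
\end{equation*}
Using $\|\partial_x p*f\|_{L^\infty}\le\tfrac12\|f\|_{L^1}$ bounds the integrand by $\tfrac{3|\alpha|}{4}\|u\|_{L^2}^2+\tfrac{|\gamma|}{4}\|\sigma\|_{L^2}^2$; inserting the $L^2$-bound on $u$ just established and the bound \eqref{bound1} on $\sigma$ reproduces $\mathcal J(t)$, and estimating the time integral by $t\sup_{[0,t]}$ yields $\|u_0\|_{L^\infty}+t\,\mathcal J(t)$. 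This last step is routine once the $L^2$-bound, and in particular the control of the cubic term above, is secured.
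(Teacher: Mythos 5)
Your $L^\infty$ step and your handling of the $\sigma$-contribution are essentially the paper's own argument (the paper even sloppily reuses the flow of $\xi(t)u$ where, as you correctly do, one should integrate along the flow of $\alpha(t)u$). The genuine gap is in the $L^2$ bound, at precisely the point you flag as ``delicate'': the cubic term cannot be absorbed, by any bookkeeping, into a Gronwall scheme. Your estimate
\begin{equation*}
\Bigl|\int_{\mathbb{K}} u\,\partial_xp*(u^2)\,dx\Bigr|=\Bigl|\int_{\mathbb{K}} u^2\,(\partial_xp*u)\,dx\Bigr|\le \tfrac12\|u\|_{L^2}^3
\end{equation*}
is correct, but it leads to the Riccati-type inequality $\frac{d}{dt}\|u\|_{L^2}\le \tfrac{3|\alpha(t)|}{4}\|u\|_{L^2}^2+(\text{lower order})$, whose conclusion necessarily degrades with $\|\alpha\|_{L^1(0,t)}$ and gives nothing once $\tfrac34\|u_0\|_{L^2}\|\alpha\|_{L^1(0,t)}$ reaches $1$. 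The bound you must prove, by contrast, is completely independent of $\alpha$: taking $\sigma_0\equiv0$ it asserts $\|u(t)\|_{L^2}\le 2\|u_0\|_{L^2}$ for \emph{all} $t$ and \emph{any} locally integrable $\alpha$. No refinement of constants can bridge this, and there is no hidden cancellation to rescue the argument: $\beta=3\alpha$ is the Degasperis--Procesi regime, in which $\|u\|_{L^2}^2$ is genuinely not conserved, so its time derivative really does contain this cubic term.

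The missing idea --- and the paper's actual proof --- is to replace $\|u\|_{L^2}^2$ by the DP-type invariant. Set $\chi=(4-\partial_x^2)^{-1}u$ and track $(m,\chi)_{L^2}$, which is equivalent to $\|u\|_{L^2}^2$ because the Fourier multiplier $(1+\eta^2)/(4+\eta^2)$ lies between $\tfrac14$ and $1$. Writing the momentum equation as $m_t+\alpha(t)(um)_x+2\alpha(t)u_xm+\gamma(t)\sigma\sigma_x=0$, integration by parts shows that the two $\alpha$-terms cancel \emph{exactly} when paired against $\chi$ (this is the conservation of $\int mv\,dx$, $(4-\partial_x^2)v=u$, for the Degasperis--Procesi equation), so that
\begin{equation*}
(m(t),\chi(t))_{L^2}=(m_0,\chi_0)_{L^2}+\int_0^t(\gamma(t')\sigma^2(t'),\chi_x(t'))_{L^2}\,dt'.
\end{equation*}
Only the $\sigma$-term survives, and since $\|\chi_x\|_{L^2}\le\tfrac14\|u\|_{L^2}$ it is \emph{linear} in $\|u\|_{L^2}$, so Gronwall applies and yields the stated $\alpha$-independent exponential bound. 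Your $L^\infty$ argument (characteristics of $\alpha(t)u$ plus $\|\partial_xp*f\|_{L^\infty}\le\tfrac12\|f\|_{L^1}$) then goes through exactly as you wrote it, but since it feeds on the $L^2$ bound, the proposal as it stands is incomplete.
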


\begin{proof}
By a standard density argument, it is sufficient to prove the lemma by assuming $s\geq3$. Define $\chi= (4-\partial_x^2)^{-1}u$, then we have $m=(1-\partial_x^2)(4-\partial_x^2)\chi $. Moreover, by the Plancheral's theorem, we have
\begin{equation*}
\begin{split}
(m_t,\chi)_{L^2}=(\mathcal {F}m_t,\mathcal {F}\chi)_{L^2}=&((1+\eta^2)(4+\eta^2) \mathcal {F}\chi_t,\mathcal {F}\chi)_{L^2}\\
=&( \mathcal {F}\chi_t,(1+\eta^2)(4+\eta^2)\mathcal {F}\chi)_{L^2}=(\chi_t,m)_{L^2}.
\end{split}
\end{equation*}
In terms of the first component of (4.1), we have
\begin{equation}
\begin{split}
\frac{d}{dt}(m,\chi)_{L^2}&=(m_t,\chi)_{L^2}+(m,\chi_t)_{L^2}=2(m_t,\chi)_{L^2} \\
&=-(2\alpha(t)(um)_x,\chi)_{L^2}-(4\alpha(t) u_xm,\chi)_{L^2}+  (\gamma(t)\sigma^2,\chi_x)_{L^2}.
\end{split}
\end{equation}
Note that
\begin{equation*}
\begin{split}
-(2\alpha(t)(um)_x,\chi)_{L^2}&=(2\alpha(t)u^2,\chi_x)_{L^2}+(2\alpha(t)u_{xx},u\chi_x)_{L^2}\\
&=(2\alpha(t)u^2,\chi_x)_{L^2}+(2\alpha(t)u_{x}^2,\chi_x)_{L^2}-( \alpha(t)u^2, \chi_{xxx})_{L^2}\\
&=(2\alpha(t)u^2,\chi_x)_{L^2}+(2\alpha(t)u_{x}^2,\chi_x)_{L^2}-(\alpha(t)u^2, 4\chi_x-u_x)_{L^2}\\
&=-(2\alpha(t)u^2,\chi_x)_{L^2}+(2\alpha(t)u_{x}^2,\chi_x)_{L^2},
\end{split}
\end{equation*}
and
\begin{equation*}
\begin{split}
-(4\alpha(t) u_xm,\chi)_{L^2}=(2\alpha(t) u^2  ,\chi_x)_{L^2}-(2\alpha(t) u_x^2 ,\chi_x)_{L^2}.
\end{split}
\end{equation*}
Therefore we get from (4.5) that
\begin{equation}
\begin{split}
 (m(t),\chi(t))_{L^2} = (m_0,\chi_0)_{L^2}+ \int_0^t(\gamma(t')\sigma^2(t'),\chi_x(t'))_{L^2}dt'.
\end{split}
\end{equation}
Since the Fourier transformation is an isometric map from $L^2$ into $L^2$, we have
\begin{equation*}
\begin{split}
 \| u(t)\|_{L^2}^2\geq(m(t),\chi(t))_{L^2}&=(\mathcal {F}m(t),\mathcal {F}\chi(t))_{L^2}\\
 &=((1+\eta^2)\mathcal {F}u(t,\eta),(4+\eta^2)^{-1}\mathcal {F}u(t,\eta))_{L^2}\\
 &\geq \frac{1}{4}\|\mathcal {F}u(t,\eta)\|_{L^2}^2=\frac{1}{4}\| u(t)\|_{L^2}^2,
\end{split}
\end{equation*}
and it follows from the H\"{o}lder inequality that
\begin{equation*}
\begin{split}
\int_0^t(\gamma(t')\sigma^2(t'),\chi_x(t'))_{L^2}dt'&\leq \int_0^t|\gamma(t')|\|\sigma^2(t')\|_{L^2}\|\chi_x(t')\|_{L^2}dt'\\
&=\int_0^t|\gamma(t')|\|\sigma(t')\|_{L^4}^2\|\eta(4+\eta^2)^{-1}\mathcal {F}u(t',\eta)\|_{L^2}dt'\\
&\leq\int_0^t|\gamma(t')|\|\sigma(t')\|_{L^2}^2\|\sigma(t')\|_{L^\infty}^2\|u(t')\|_{L^2}dt'\\
&\leq\|\sigma_0 \|_{H^{s-1}} ^4\int_0^t|\gamma(t')|e^{-3\int_0^t\inf_{x\in \mathbb{K}} \{\xi(t') u_x(t',x)\}dt'}\|u(t')\|_{L^2}dt'.
\end{split}
\end{equation*}
Inserting above two estimates into (4.6), we get
\begin{equation*}
\begin{split}
\| u(t)\|_{L^2}^2\leq 4\| u_0\|_{L^2}^2+ 4\|\sigma_0 \|_{H^{s-1}} ^4\int_0^t|\gamma(t')|e^{-3\int_0^t\inf_{x\in \mathbb{K}} \{\xi(t') u_x(t',x)\}dt'}\|u(t')\|_{L^2}dt',
\end{split}
\end{equation*}
which combined with the Gronwall's lemma yields the $L^2$-bound for $u$.

To derive the $L^\infty$-bound for $u$, we utilize the equivalent form of the first component:
\begin{equation*}
 u_t+\alpha(t) uu_x=-\partial_xp*\left(\frac{3\alpha(t)}{2} u^2+\frac{\gamma(t)}{2} \sigma^2 \right) .
\end{equation*}
Along the characteristic $\psi(t,x)$, we have
\begin{equation*}
\begin{split}
\left \|\frac{d u(t,\psi(t,x))}{dt}\right\|_{L^\infty}&=\left\|\partial_xp*\left(\frac{3\alpha(t)}{2} u^2+\frac{\gamma(t)}{2} \sigma^2 \right)(t,\psi(t,x))\right\|_{L^\infty}\\
&\leq \left\|\partial_xp\right\|_{L^\infty}\left\| \frac{3\alpha(t)}{2} u^2+\frac{\gamma(t)}{2} \sigma^2\right\|_{L^1}\\
&\leq  |\alpha(t)|\| u \|_{L^2}^2+|\gamma(t)| \|\sigma \|_{L^2}^2\\
&\leq4\| u_0\|_{L^2}^2|\alpha(t)|\exp\left\{4\|\sigma_0 \|_{H^{s-1}} ^4 \|\gamma\|_{L^1(0,t)}e^{-3\int_0^t\inf_{x\in \mathbb{K}} \{\xi(t') u_x(t',x)\}dt'}\right\}\\
&\quad +\|\sigma_0\|_{L^2}^2|\gamma(t)| e^{- \int_0^t\inf_{x\in \mathbb{K}} \{\xi(t')u_x(t',x)\}  dt'}\\
&= \mathcal {J}(t).
\end{split}
\end{equation*}
Integrating with respect to $t$, we get from the above estimate that
\begin{equation*}
\begin{split}
 -\int_0^t\mathcal {J}(t')dt'\leq u(t,\psi(t,x)) -u_0\leq \int_0^t\mathcal {J}(t')dt'.
\end{split}
\end{equation*}
By using the homeomorphism property of $\psi(t,x)$ in $x$-variable, we obtain the $L^\infty$-bound for $u$. The proof of Lemma is completed.
\end{proof}

Based on th above properties for $u$ and $\sigma$, we can now give the proof of Theorem \ref{theorem4}.
\begin{proof} [Proof of Theorem \ref{theorem3}] We prove the theorem by an inductive argument with respect to the space regularity index $s$.
\vskip3mm\noindent
\textbf{Step 1 ($\frac{3}{2}< s < 2$):}  By applying Lemma 2.5 to the transport equation with respect to $\sigma$:
\begin{equation}
\begin{split}
 \sigma_t+\xi(t)u\sigma_x+\xi(t)\sigma u_x=0,
\end{split}
\end{equation}
we have  (for every $1 <s< 2$, indeed)
\begin{equation*}
\begin{split}
  \|\sigma(t)\|_{H^{s-1}}\leq  \|\sigma_0\|_{H^{s-1}}+C\int_0^t \|\xi(t')\sigma u_x\|_{H^{s-1}}dt'+C\int_0^t|\xi(t')| \|\sigma\|_{H^{s-1}}(\|u\|_{L^\infty}+\|u_x\|_{L^\infty})dt'.
\end{split}
\end{equation*}
Thanks to the Moser estimate (cf. Corollary 2.8 in \cite{64}), we have
\begin{equation*}
\begin{split}
 \|\xi(t)\sigma u_x\|_{H^{s-1}}\leq |\xi(t)|(\|\sigma \|_{L^\infty}\|u_x\|_{H^{s-1}}+\|u_x\|_{L^\infty}\|\sigma\|_{H^{s-1}}).
\end{split}
\end{equation*}
 Therefore, we have
\begin{equation}
\begin{split}
  \|\sigma(t)\|_{H^{s-1}}\leq&  \|\sigma_0\|_{H^{s-1}}+C\int_0^t |\xi(t')|(\|\sigma \|_{L^\infty}\|u\|_{H^{s}}+\|u_x\|_{L^\infty}\|\sigma\|_{H^{s-1}})dt'\\
  &+C\int_0^t |\xi(t')|(\|u\|_{L^\infty}+\|u_x\|_{L^\infty})\|\sigma\|_{H^{s-1}}dt'.
\end{split}
\end{equation}
On the other hand, applying Lemma  2.3 to the equation
\begin{equation}
\begin{split}
  u_t+\alpha(t) uu_x=-\partial_xp*\left(\frac{3\alpha (t)}{2} u^2+\frac{\gamma(t)}{2} \sigma^2\right)
\end{split}
\end{equation}
implies (for every $s > 1$, indeed)
\begin{equation*}
\begin{split}
  \|u(t)\|_{H^{s}}\leq&  \|u_0\|_{H^{s}}+C\int_0^t\left \|\partial_xp*\left(\frac{3\alpha }{2} u^2+\frac{\gamma}{2} \sigma^2\right)(t')\right\|_{H^{s}}dt'\\
  &+C\int_0^t|\alpha(t')|\|u_x(t')\|_{L^\infty}\|u(t')\|_{H^{s}}dt'.
\end{split}
\end{equation*}
Thanks to the Moser estimate for $s-1>0$, one has
\begin{equation*}
\begin{split}
\left \|\partial_xp*\left(\frac{3\alpha }{2} u^2+\frac{\gamma}{2} \sigma^2\right) \right\|_{H^{s}}
\leq  C(|\alpha(t)|\| u\|_{L^\infty}\| u\|_{H^{s-1}}+ |\gamma(t)|\| \sigma\|_{L^\infty}\| \sigma\|_{H^{s-1}}).
\end{split}
\end{equation*}
From this, we reach
\begin{equation}
\begin{split}
  \|u(t)\|_{H^{s}}\leq&  \|u_0\|_{H^{s}}+C\int_0^t(|\alpha(t')|\| u(t')\|_{L^\infty}\| u(t')\|_{H^{s-1}}+ |\gamma(t')|\| \sigma(t')\|_{L^\infty}\| \sigma(t')\|_{H^{s-1}})dt'\\
  &+C\int_0^t|\alpha(t')| \|u_x(t')\|_{L^\infty}\|u(t')\|_{H^{s}}dt'.
\end{split}
\end{equation}
which together with (4.8) yields that
\begin{equation}
\begin{split}
&\|u(t)\|_{H^{s}}+\|\sigma(t)\|_{H^{s-1}}\leq  \|u_0\|_{H^{s}}+\|\sigma_0\|_{H^{s-1}} \\
  & \quad  +C\int_0^t (|\alpha(t')|+|\xi(t')|+|\gamma(t')|)(\|u\|_{L^\infty}+\|u_x\|_{L^\infty}+\| \sigma\|_{L^\infty}) (\| u\|_{H^{s}}+ \|\sigma\|_{H^{s-1}}) dt' .
\end{split}
\end{equation}
An application of the  Gronwall's lemma leads to
\begin{equation}
\begin{split}
\|u(t)\|_{H^{s}}+\|\sigma(t)\|_{H^{s-1}}\leq (\|u_0\|_{H^{s}}+\|\sigma_0\|_{H^{s-1}})e^{C\int_0^t (|\alpha(t')|+|\xi(t')|+|\gamma(t')|)  (\|u\|_{L^\infty}+\|u_x\|_{L^\infty}+\| \sigma\|_{L^\infty}) dt'}.
\end{split}
\end{equation}
In terms of the estimate (4.4) and Lemma 4.1, we see that
$$
 \|\sigma(t)\|_{L^\infty}\leq \|\sigma_0 \|_{H^{s-1}} e^{ \int_0^t |\xi(t')| \|u_x(t')\|_{L^\infty} dt'},
$$
and
\begin{equation*}
\begin{split}
  \|u(t)\|_{L^\infty} \leq& \|u_0\|_{L^\infty}+  4\| u_0\|_{L^2}^2t|\alpha(t)|\exp\left\{4\|\sigma_0 \|_{H^{s-1}} ^4\|\gamma\|_{L^1(0,t)}e^{3\int_0^t|\xi(t')|\|u_x(t')\|_{L^\infty}dt'}\right\}\\
& +\|\sigma_0\|_{L^2}^2t|\gamma(t)| e^{\int_0^t|\xi(t')|\|u_x(t')\|_{L^\infty}dt'} .
\end{split}
\end{equation*}
If $\int_0^{T^*} (|\alpha(t')|+|\xi(t')|+|\gamma(t')|) \|u_x(t')\|_{L^\infty} dt'<\infty$, then it  follows from (4.12) and the fact of $\alpha,\gamma,\xi \in L^2_{loc}([0,\infty);\mathbb{R})$ that
\begin{equation*}
\begin{split}
\lim\sup_{t\rightarrow T^*}(\|u(t)\|_{H^{s}}+\|\sigma(t)\|_{H^{s-1}})< \infty,
\end{split}
\end{equation*}
which is incompatible with the assumption that $T^*$ is the maximum existence time of solutions.

\vskip3mm\noindent
\textbf{Step 2 ($2\leq s < \frac{5}{2}$):}  Applying Lemma \ref{lem:priorieti} to Eq.(4.7), we get
\begin{equation}
\begin{split}
  \|\sigma(t)\|_{H^{s-1}}\leq&  \|\sigma_0\|_{H^{s-1}}+C\int_0^t |\xi(t')| \|\sigma \|_{L^\infty}\|u\|_{H^{s}} dt' +C\int_0^t |\xi(t')| \|u_x\|_{L^\infty\cap H^\frac{1}{2}} \|\sigma\|_{H^{s-1}}dt'.
\end{split}
\end{equation}
which together with (4.10) yields
\begin{equation*}
\begin{split}
  & \|u(t)\|_{H^{s}}+ \|\sigma(t)\|_{H^{s-1}}\leq \|u_0\|_{H^{s}}+\|\sigma_0\|_{H^{s-1}} \\
  &\quad +C\int_0^t(|\alpha(t')|+|\xi(t')|+|\gamma(t')|)  (\|u \|_{H^{\frac{3}{2}+\epsilon}}+\|\sigma \|_{L^\infty}) (\|u\|_{H^{s}}+\|\sigma\|_{H^{s-1}})dt',
\end{split}
\end{equation*}
for any $\epsilon\in (0,\frac{1}{2})$, where we used the fact of $H^{\frac{3}{2}+\epsilon}\hookrightarrow L^\infty\cap H^\frac{1}{2}$.

Applying the Gronwall's lemma to above inequality leads to
\begin{equation}
\begin{split}
 \|u(t)\|_{H^{s}}+ \|\sigma(t)\|_{H^{s-1}}\leq (\|u_0\|_{H^{s}}+\|\sigma_0\|_{H^{s-1}})
 e^{C\int_0^t(|\alpha(t')|+|\xi(t')|+|\gamma(t')|)  (\|u \|_{H^{\frac{3}{2}+\epsilon}}+\|\sigma \|_{L^\infty}) dt'}.
\end{split}
\end{equation}
Therefore, thanks to the uniqueness of solution in Theorem \ref{theorem1}, we get that: if the maximal existence time $T^*$ satisfies $\int_0^{T^*}(|\alpha(t')|+|\xi(t')|+|\gamma(t')|) \|u_x (t')\|_{L^\infty} dt'< \infty$, then Step 1 with $\frac{3}{2}< \frac{3}{2}+\epsilon < 2$ and the fact of $
 \|\sigma(t)\|_{L^\infty}\leq \|\sigma_0 \|_{H^{s-1}} e^{ \int_0^t |\xi(t')| \|u_x(t')\|_{L^\infty} dt'},
$ imply that
\begin{equation*}
\begin{split}
\lim\sup_{t\rightarrow T^*}(\|u(t)\|_{H^{s}}+\|\sigma(t)\|_{H^{s-1}})< \infty
\end{split}
\end{equation*}
contradicts the assumption that $T^*$ is the maximum existence time of solutions.

\vskip3mm\noindent
\textbf{Step 3 ($2< s < 3$):}
Differentiating the Eq.(4.7) with respect to $x$, we have
\begin{equation*}
\begin{split}
 \partial_t\sigma_x+\xi(t)u\partial_x\sigma_x+2\xi(t)u_x \sigma_x+\xi(t)\sigma u_{xx}=0.
\end{split}
\end{equation*}
Applying   Lemma 2.5 to the above equation implies that
\begin{eqnarray}
\begin{split}
  \|\sigma_x(t)\|_{H^{s-2}}\leq&  \|\partial_x\sigma_0\|_{H^{s-2}}+C\int_0^t |\xi(t')| \|(2 u_x \sigma_x+ \sigma u_{xx})(t')\|_{H^{s-2}} dt' \\
  &+C\int_0^t |\xi(t')|(\|u\|_{L^\infty}+\|u_x\|_{L^\infty})\|\sigma_x\|_{H^{s-2}}dt'\\
  \leq & C\int_0^t |\xi(t')|(\|u\|_{L^\infty}+\|u_x\|_{L^\infty}+\|\sigma\|_{L^\infty})(\|u(t')\|_{H^{s}}+\|\sigma(t')\|_{H^{s-1}})dt',\\
\end{split}
\end{eqnarray}
where we used the following Moser-type estimates:
\begin{equation*}
\begin{split}
 \| u_x \sigma_x \|_{H^{s-2}}  \leq  C\left(\|\sigma\|_{L^\infty}\| u_x \|_{H^{s-1}}+ \|\sigma_x\|_{H^{s-2}}\| u_x\|_{L^\infty}    \right),
\end{split}
\end{equation*}
and
\begin{equation*}
\begin{split}
 \|\sigma u_{xx} \|_{H^{s-2}}  \leq  C\left(\|\sigma\|_{H^{s-1}}\| u_{x} \|_{L^\infty}+ \|u_{xx}\|_{H^{s-2}}\|\sigma\|_{L^\infty}    \right).
\end{split}
\end{equation*}
(4.15), together with (4.18) and (4.16) (where $s-1$ is replaced by $s-2$), implies that
\begin{equation*}
\begin{split}
&\|u(t)\|_{H^{s}}+\|\sigma(t)\|_{H^{s-1}}\leq   \|u_0\|_{H^{s}}+\|\sigma_0\|_{H^{s-1}} \\
  & \quad  +C\int_0^t (|\alpha(t')|+|\xi(t')|+|\gamma(t')|) (\|u\|_{L^\infty}+\|u_x\|_{L^\infty}+\| \sigma\|_{L^\infty}) (\| u\|_{H^{s}}+ \|\sigma\|_{H^{s-1}}) dt' .
\end{split}
\end{equation*}
Applying the Gronwall's inequality again gives (4.10). Hence, using the arguments as in Step 1, it completes the proof of Theorem 4.1 for $2 <s< 3$.

\vskip3mm\noindent
\textbf{Step 4 ($s =k \in \mathbb{N}$, $k \geq3$):} Differentiating the Eq.(4.15) $k-2$ times with respect to $x$, we get
\begin{equation}
\begin{split}
\partial _t\partial_x^{k-2}\sigma+\xi(t) u\partial_x\partial_x^{k-2}\sigma+\xi(t)\sum_{l=1} ^{k-2}C_{k-1}^l\partial_x^{l}u \partial_x^{k-l-1}\sigma + \sigma\partial_x^{k-1}u=0,
\end{split}
\end{equation}
Applying Theorem 3.1 to the transport equation (4.24), we have
\begin{equation}
\begin{split}
  \|\partial_x^{k-2}\sigma(t)\|_{H^1}\leq&  \|\partial_x^{k-2} \sigma_0\|_{H^1}+C\int_0^t\|\partial_x^{k-2}\sigma(t')\|_{H^1} \|\xi(t) \partial_x u(t')\|_{L^\infty\cap H^{\frac{1}{2}}} dt'\\
  & +C\int_0^t\left\|\xi(t)\sum_{l=1} ^{k-2}C_{k-1}^l\partial_x^{l}u \partial_x^{k-l-1}\sigma+ \sigma\partial_x^{k-1}u\right\|_{ H^{1}} dt'\\
\leq &\|\sigma_0\|_{H^{k-1}} +C\int_0^t|\xi(t')| (\| u (t')\|_{ H^{s}}+\| \sigma (t')\|_{ H^{s-1}})(\| u (t')\|_{ H^{s-1}}+\| \sigma (t')\|_{ H^{1}}) dt',
\end{split}
\end{equation}
where we have used the following facts (as $H^1$ is an algebra):
\begin{equation*}
\begin{split}
 &\left\|\xi(t)\sum_{l=1} ^{k-2}C_{k-1}^l\partial_x^{l}u \partial_x^{k-l-1}\sigma \right\|_{ H^{1}}\\
 &\quad \leq  |\xi(t)|\sum_{l=1} ^{k-2}C_{k-1}^l\|\partial_x^{l}u\|_{ H^{1}}\| \partial_x^{k-l-1}\sigma  \|_{ H^{1}}\leq C |\xi(t)| \| u \|_{ H^{s-1}}\| \sigma \|_{ H^{s-1}},
\end{split}
\end{equation*}
and
\begin{equation*}
\begin{split}
\|\sigma\partial_x^{k-1}u\|_{ H^{1}} \leq \|\sigma\|_{ H^{1}}\|\partial_x^{k-1}u\|_{ H^{1}}\leq C\|\sigma\|_{ H^{1}}\|u\|_{ H^{s}}.
\end{split}
\end{equation*}
(4.18), together with (4.10) and (4.8) (where $s-1$ is replaced by 1), implies that
\begin{equation*}
\begin{split}
   &\| u(t)\|_{H^{s}}+\| \sigma(t)\|_{H^{s-1}} \leq  \|u_0\|_{H^{s}} +\|\sigma_0\|_{H^{s-1}}\\
   &\quad +C\int_0^t(|\alpha(t')|+|\xi(t')|+|\gamma(t')|)(\| u (t')\|_{ H^{s-1}}+\| \sigma (t')\|_{ H^{1}}) (\| u (t')\|_{ H^{s}}+\| \sigma (t')\|_{ H^{s-1}})dt'.
\end{split}
\end{equation*}
Applying the Gronwall's lemma to the above inequality leads to
\begin{equation}
\begin{split}
\| u(t)\|_{H^{s}}+\| \sigma(t)\|_{H^{s-1}}\leq (\|u_0\|_{H^{s}} +\|\sigma_0\|_{H^{s-1}})e^{C\int_0^t|\xi(t')| (\| u (t')\|_{ H^{s-1}}+\| \sigma (t')\|_{ H^{1}}) dt'}
\end{split}
\end{equation}
If the maximal existence time $T^*$ satisfies $\int_0^{T^*}(|\alpha(t')|+|\xi(t')|+|\gamma(t')|) \|u_x (t')\|_{L^\infty} dt'< C$ for some  positive constant $C$,  thanks to
the uniqueness of solution in Theorem \ref{theorem1}, we get that $ \| u (t)\|_{ H^{s-1}}+\| \sigma (t)\|_{ H^{1}}$ is uniformly bounded by the induction assumption, which together with (4.18) and the fact of $ L^2_{loc}([0,\infty);\mathbb{R})\subset L^1_{loc}([0,\infty);\mathbb{R})$ implies
\begin{equation*}
\begin{split}
\lim\sup_{t\rightarrow T^*}(\|u(t)\|_{H^{s}}+\|\sigma(t)\|_{H^{s-1}})< \infty.
\end{split}
\end{equation*}
This leads to a contradiction.

\vskip3mm\noindent
\textbf{Step 5 ($k<s<k+1$, $k\in\mathbb{N}$, $k\geq3$):} Differentiating the Eq. (4.15) $k-1$ times with respect to $x$, we get
\begin{equation}
\begin{split}
\partial _t\partial_x^{k-1}\sigma+\xi(t) u\partial_x\partial_x^{k-1}\sigma+\xi(t)\sum_{l=1} ^{k-1}C_{k}^l\partial_x^{l}u \partial_x^{k-l}\sigma + \xi(t)\sigma\partial_x\partial_x^{k-1}u=0.
\end{split}
\end{equation}
Applying Lemma 2.5 to (4.19) implies that
\begin{equation*}
\begin{split}
  \|\partial_x^{k-1}\sigma(t)\|_{H^{s-k}}\leq&  \|\partial_x^{k-1}\sigma_0\|_{H^{s-k}}+C\int_0^t \left\|\xi(t)\sum_{l=1} ^{k-1}C_{k}^l\partial_x^{l}u \partial_x^{k-l}\sigma +\xi(t) \sigma\partial_x\partial_x^{k-1}u\right\|_{H^{s-k}}dt'\\
  &+C\int_0^t|\xi(t')| \|\partial_x^{k-1}\sigma\|_{H^{s-k}}(\|u\|_{L^\infty}+\|u_x\|_{L^\infty})dt'.
\end{split}
\end{equation*}
Using the Moser-type estimate and the Sobolev embedding theorem, we have for $0 <\epsilon < \frac{1}{2}$
\begin{equation*}
\begin{split}
& \left\|\xi(t)\sum_{l=1} ^{k-1}C_{k}^l\partial_x^{l}u \partial_x^{k-l}\sigma \right\|_{H^{s-k}}\\
  &\quad \leq C|\xi(t)| \sum_{l=1} ^{k-1}(\|\partial_x^{l}u\|_{H^{s-k+1}}\|\partial_x^{k-l-1}\sigma\|_{L^\infty}+\|\partial_x^{k-l}\sigma\|_{H^{s-k}}\|\partial_x^{l}u\|_{L^\infty})\\
  &\quad \leq C|\xi(t)| \sum_{l=1} ^{k-1}(\|\partial_x^{l}u\|_{H^{s-k+1}}\|\partial_x^{k-l-1}\sigma\|_{H^{\frac{1}{2}+\epsilon}}+\|\partial_x^{k-l}\sigma\|_{H^{s-k}}\|\partial_x^{l}u\|_{H^{\frac{1}{2}+\epsilon}})\\
  &\quad \leq C|\xi(t)| (\| u\|_{H^{s}}\|\sigma\|_{H^{k-\frac{3}{2}+\epsilon}}+\| \sigma\|_{H^{s-1}}\| u\|_{H^{k-\frac{1}{2}+\epsilon}})
\end{split}
\end{equation*}
and
\begin{equation*}
\begin{split}
 \| \xi(t) \sigma\partial_x\partial_x^{k-1}u \|_{H^{s-k}}&\leq   C|\xi(t)|  (\|\partial_x^{k-1}u\|_{H^{s-k}}\| \sigma\|_{L^\infty}+\| \sigma\|_{H^{s-k}}\|\partial_x^{k-1}u\|_{L^\infty})\\
 &\leq   C|\xi(t)|  (\| u\|_{H^{s-1}}\| \sigma\|_{L^\infty}+\| \sigma\|_{H^{s-k}}\| u\|_{H^{k-\frac{1}{2}+\epsilon}}).
\end{split}
\end{equation*}
Hence, we get
\begin{equation*}
\begin{split}
  \|\partial_x^{k-1}\sigma(t)\|_{H^{s-k}}\leq&  \| \sigma_0\|_{H^{s-1}}+C\int_0^t |\xi(t')| (\| u\|_{H^{s}} +\| \sigma\|_{H^{s-1}} )  (\| u\|_{H^{k-\frac{1}{2}+\epsilon}}+\|\sigma\|_{H^{k-\frac{3}{2}+\epsilon}})dt',
\end{split}
\end{equation*}
which together with (4.10) and (4.8) (where $ s-1$ is replaced by $s-k$) implies that
\begin{equation*}
\begin{split}
 &\|u(t)\|_{H^{s}}+  \| \sigma(t)\|_{H^{s-k}}\leq  \|u_0\|_{H^{s}}+ \| \sigma_0\|_{H^{s-1}}\\
 &\quad +C\int_0^t(|\alpha(t')|+|\xi(t')|+|\gamma(t')|) (\| u\|_{H^{s}} +\| \sigma\|_{H^{s-1}} )  (\| u\|_{H^{k-\frac{1}{2}+\epsilon}}+\|\sigma\|_{H^{k-\frac{3}{2}+\epsilon}})dt',
\end{split}
\end{equation*}
Applying Gronwall's inequality then gives that
\begin{equation}
\begin{split}
\|u(t)\|_{H^{s}}+  \| \sigma(t)\|_{H^{s-k}}\leq  (\|u_0\|_{H^{s}}+ \| \sigma_0\|_{H^{s-1}})e^{C\int_0^t(|\alpha(t')|+|\xi(t')|+|\gamma(t')|) (\| u\|_{H^{k-\frac{1}{2}+\epsilon}}+\|\sigma\|_{H^{k-\frac{3}{2}+\epsilon}})dt'}.
\end{split}
\end{equation}
If $\int_0^{T^*}(|\alpha(t')|+|\xi(t')|+|\gamma(t')|) \|u_x (t')\|_{L^\infty} dt'< \infty$,  thanks to
the uniqueness of solution in Theorem \ref{theorem1}, we get that $\| u\|_{H^{k-\frac{1}{2}+\epsilon}}+\|\sigma\|_{H^{k-\frac{3}{2}+\epsilon}}$ is uniformly bounded by the induction assumption, which together with (4.20) and the fact of $ L^2_{loc}([0,\infty);\mathbb{R})\subset L^1_{loc}([0,\infty);\mathbb{R})$ implies
\begin{equation*}
\begin{split}
\lim\sup_{t\rightarrow T^*}(\|u(t)\|_{H^{s}}+\|\sigma(t)\|_{H^{s-1}})< \infty.
\end{split}
\end{equation*}
which leads to a contradiction. Therefore, from Step 1 to Step 5, we complete the proof of the blow-up criteria.

 To derive a lower bound for the blow-up time, it follows from the above analysis and the Sobolev embedding $H^s\hookrightarrow L^\infty$ for $s>\frac{1}{2}$ that
\begin{equation*}
\begin{split}
 &\|u(t)\|_{H^{s}}+  \| \sigma(t)\|_{H^{s-k}}\leq  \|u_0\|_{H^{s}}+ \| \sigma_0\|_{H^{s-1}}\\
 &\quad +C\int_0^t(|\alpha(t')|+|\xi(t')|+|\gamma(t')|) (\| u(t')\|_{H^{s}} +\| \sigma(t')\|_{H^{s-1}} ) ^2dt'.
\end{split}
\end{equation*}
We find by solving the above inequality that
\begin{equation}
\begin{split}
 \|u(t)\|_{H^{s}}+  \| \sigma(t)\|_{H^{s-k}}\leq   \left(1-C(\|u_0\|_{H^{s}}+ \| \sigma_0\|_{H^{s-1}})\int_0^t  (|\alpha(t')|+|\xi(t')|+|\gamma(t')|) dt'\right)^{-1}.
\end{split}
\end{equation}
From the right hand side of (4.21), we have the lower bound of blow-up time $T^*$
\begin{equation*}
\begin{split}
T^*\geq T(u_0,\sigma_0)\overset{\text{def}}{=}\sup_{t>0}\left\{t>0;~~\int_0^{t}(|\alpha(t')|+|\xi(t')|+|\gamma(t')|) dt'\leq\frac{1}{C(\|u_0\|_{H^{s}}+ \| \sigma_0\|_{H^{s-1}})}\right\}.
\end{split}
\end{equation*}
Therefore, if $\alpha,\beta,\gamma\in L^1([0,\infty);\mathbb{R})$ satisfies
$$
 \int_0^t  (|\alpha(t')|+|\xi(t')|+|\gamma(t')|) dt'< \frac{1}{C(\|u_0\|_{H^{s}}+ \| \sigma_0\|_{H^{s-1}})},
$$
we get
$$
T^*=T(u_0,\sigma_0)=\infty,
$$
which together with (4.21) implies that
\begin{equation*}
\begin{split}
\sup_{t\in [0,\infty)}(\|u(t)\|_{H^{s}}+\|\sigma(t)\|_{H^{s-1}})< \infty.
\end{split}
\end{equation*}
In other words, the solution $(u,\sigma)$ exists globally. The proof of Theorem is now completed.
\end{proof}

\begin{proof} [Proof of Theorem \ref{theorem4}]
Assume that the solution $(u,\sigma)$ blows up in finite time $(T^* <\infty)$ and there exists an $M > 0$ such that $\inf_{x\in \mathbb{K}} u_x(t,x)\geq -M, \quad  \forall t \in [0,T^*)$. Since $\xi(t)\geq 0$ and  $\alpha(t)+\gamma(t)+\xi(t)\geq 0$, we have
\begin{equation}
\begin{split}
-\inf_{x\in \mathbb{K}} \{\xi(t)u_x(t,x)\}\leq M \xi(t) , \quad  \forall t \in [0,T^*),
\end{split}
\end{equation}
and
\begin{equation}
\begin{split}
-\inf_{x\in \mathbb{K}} \{(\alpha(t)+\gamma(t)+\xi(t))u_x(t,x)\}\leq M(\alpha(t)+\gamma(t)+\xi(t)), \quad  \forall t \in [0,T^*).
\end{split}
\end{equation}
It then follows from (4.22) and Lemma 4.1 that
\begin{equation*}
\begin{split}
 \|\sigma(t,\cdot)\|_{L^\infty} \leq \|\sigma_0 \|_{H^{s-1}} e^{M\|\xi\|_{L^1(0,t)}},
\end{split}
\end{equation*}
and
\begin{equation*}
\begin{split}
  \|u(t)\|_{L^\infty} \leq& \|u_0\|_{L^\infty}+  4\| u_0\|_{L^2}^2t|\alpha(t)|\exp\left\{4\|\sigma_0 \|_{H^{s-1}} ^4\|\gamma\|_{L^1(0,t)}e^{ 3M\|\xi\|_{L^1(0,t)}}\right\}\\
& +\|\sigma_0\|_{L^2}^2t|\gamma(t)| e^{M\|\xi\|_{L^1(0,t)}}.
\end{split}
\end{equation*}
Differentiating the first component of system \eqref{1.6}, we have
\begin{equation}
\begin{split}
 \partial_t u_x+\alpha(t) u \partial_xu_x=&- p*\left(\frac{3\alpha(t)}{2} u^2+\frac{\gamma(t)}{2} \sigma^2\right) +\frac{3\alpha(t)}{2} u^2+\frac{\gamma(t)}{2} \sigma^2-\alpha(t)u_x^2 .
\end{split}
\end{equation}
Along the characteristic curve $\psi(t,x)$, it is seen that
\begin{equation*}
\begin{split}
  \frac{du_x(t,\psi(t,x))}{dt}=&- p*\left(\frac{3\alpha}{2} u^2+\frac{\gamma}{2} \sigma^2 \right)(t,\psi(t,x)) +\left(\frac{3\alpha}{2} u^2+\frac{\gamma}{2} \sigma^2-\alpha u_x^2\right)(t,\psi(t,x))\\
  &\leq \left(\frac{3\alpha}{2} u^2+\frac{\gamma}{2} \sigma^2 \right)(t,\psi(t,x))\\
  &\leq \frac{3|\beta(t)| }{2}\|u(t)\|_{L^\infty}+\frac{ |\gamma(t)| }{2}\|\sigma(t)\|_{L^\infty}\\
  &\leq  \frac{3 \|u_0\|_{H^{s}}}{2}|\beta(t)|+  6\| u_0\|_{L^2}^2 t|\beta(t)||\alpha(t)|\exp\left\{4\|\sigma_0 \|_{H^{s-1}} ^4\|\gamma\|_{L^1(0,t)}e^{ 3M\|\xi\|_{L^1(0,t)}}\right\}\\
& \quad +\frac{3 \|\sigma_0\|_{H^{s-1}}^2}{2}t|\beta(t)||\gamma(t)| e^{M\|\xi\|_{L^1(0,t)}} +\frac{ \|\sigma_0 \|_{H^{s-1}} }{2} |\gamma(t)|e^{M\|\xi\|_{L^1(0,t)}}.
\end{split}
\end{equation*}
Integrating the above inequality on $[0,t]$, we obtain
\begin{equation}
\begin{split}
   u_x(t,\psi(t,x))-u_{0,x}  &\leq  \frac{3 \|u_0\|_{H^{s}}}{2}\|\beta\|_{L^1(0,t)}+\frac{3 \|\sigma_0\|_{H^{s-1}}^2}{2} t e^{M\|\xi\|_{L^1(0,t)}}(\|\beta\|_{L^1(0,t)}^2+\|\gamma\|_{L^1(0,t)}^2)\\
   &\quad +  6\| u_0\|_{L^2}^2 t \exp\left\{4\|\sigma_0 \|_{H^{s-1}} ^4\|\gamma\|_{L^1(0,t)}e^{ 3M\|\xi\|_{L^1(0,t)}}\right\}(\|\beta\|_{L^1(0,t)}^2+\|\gamma\|_{L^1(0,t)}^2)\\
& \quad +\frac{ \|\sigma_0 \|_{H^{s-1}} }{2}\|\gamma\|_{L^1(0,t)}e^{M\|\xi\|_{L^1(0,t)}}\\
& \overset{\text{def}}{=} \mathcal {L}(t)\leq \mathcal {L}(T^*)<\infty.
\end{split}
\end{equation}
Note that $\mathcal {L}(t)$ is an increasing and absolutely continuous function. Since $\alpha,\gamma,\xi \in L^2_{loc}([0,\infty);\mathbb{R})\subset L^1_{loc}([0,\infty);\mathbb{R})$, it follows from (4.25) that
\begin{equation*}
\begin{split}
  \sup_{x\in \mathbb{K}} u_x(t,x) =\sup_{x\in \mathbb{K}} u_x(t,\psi(t,x)) \leq \|u_{0,x} \|_{L^\infty}+\mathcal {L}(T^*), \quad \forall t\in [0,T^*),
\end{split}
\end{equation*}
which together with the fact of $\alpha(t)+\gamma(t)+\xi(t)\geq 0$ implies that
\begin{equation}
\begin{split}
  \sup_{x\in \mathbb{K}} \{(\alpha(t)+\gamma(t)+\xi(t))u_x(t,x)\}  \leq(\|u_{0} \|_{H^{s}}+\mathcal {L}(T^*))(\alpha(t)+\gamma(t)+\xi(t)),
\end{split}
\end{equation}
for any $t\in [0,T^*)$. As a result, we get from (4.23) and (4.26) that
\begin{equation*}
\begin{split}
&\int_0^{T^*}(\alpha(t')+\gamma(t')+\xi(t'))\|u_x(t')\|_{L^\infty}dt'\\
&\quad \leq (M+\|u_{0} \|_{H^{s}}+\mathcal {L}(T^*))\int_0^{T^*}(\alpha(t')+\gamma(t')+\xi(t'))dt'<\infty,
\end{split}
\end{equation*}
Theorem \ref{theorem4} implies that the maximal existence time $T^*=\infty$, which contradicts the assumption on the maximal existence time $T^*<\infty$.

Conversely, the Sobolev embedding $H^s\hookrightarrow L^\infty$ with $s > \frac{1}{2}$  implies that if $\lim_{t\rightarrow T^*}\inf_{x\in \mathbb{K}} u_x(t,x)=-\infty$ holds, then the corresponding solution blows up in finite time, which completes the proof of Theorem \ref{theorem4}.
\end{proof}



\bibliographystyle{apa}
\bibliography{zhang_global}

@article{zhang2019,
  title={Global-in-time solvability and blow-up for a non-isospectral two-component cubic Camassa-Holm system in a critical Besov space},
  author={Zhang, L. and Qiao, Z.},
  journal={Journal of Differential Equations},
  volume={274},
  pages={414-460},
  year={2021}
}

@article{Chang2018,
  title={Moment modification, multipeakons, and nonisospectral
generalizations},
  author={Chang, X. and Hu, X. and Li, S.H.},
  journal={Advances in Mathematics},
  volume={263},
  pages={154-177},
  year={2014}
}

@article{Chang2014,
  title={A generalized nonisospectral Camassa-Holm equation and its multipeakon solutions},
  author={Chang, X. and Chen, X., and Hu, X.},
  journal={Advances in Mathematics},
  volume={263},
  pages={154-177},
  year={2014}
}

@article{1,
  title={An integrable shallow water equation with peaked solitons},
  author={Camassa, R. and Holm, D.D.},
  journal={Physical Review Letters},
  volume={71},
  number={11},
  pages={1661},
  year={1993},
  publisher={APS}
}

@article{2,
  title={The hydrodynamical relevance of the Camassa-Holm and Degasperis--Procesi equations},
  author={Constantin, A. and Lannes, D.},
  journal={Archive for Rational Mechanics and Analysis},
  volume={192},
  number={1},
  pages={165},
  year={2009},
  publisher={Springer}
}

@article{3,
  title={Camassa-Holm, Korteweg-de Vries and related models for water waves},
  author={Johnson, R.S.},
  journal={Journal of Fluid Mechanics},
  volume={455},
  pages={63--82},
  year={2002},
  publisher={Cambridge University Press}
}

@article{5,
title={Symplectic structures, their B\"{a}cklund transformations and hereditary symmetries},
  author={Fuchssteiner, B. and Fokas, A.S.},
  journal={Physica D: Nonlinear Phenomena},
  volume={4},
  number={1},
  pages={47--66},
  year={1981},
  publisher={Elsevier}
}

@article{20,
  title={Global existence and blow-up for a shallow water equation},
  author={Constantin, A. and Escher, J.},
  journal={Annali della Scuola normale superiore di Pisa. Classe di scienze},
  volume={26},
  number={2},
  pages={303--328},
  year={1998},
  publisher={Classe di Scienze}
}

@book{64,
  title={Fourier analysis and nonlinear partial differential equations},
  author={Bahouri, H. and Chemin, J. and Danchin, R.},
  volume={343},
  year={2011},
  publisher={Springer Science \& Business Media}
}

@article{liu2011cauchy,
  title={On the Cauchy problem of a two-component b-family system},
  author={Liu, J. and Yin, Z.},
  journal={Nonlinear Analysis: Real World Applications},
  volume={12},
  number={6},
  pages={3608--3620},
  year={2011},
  publisher={Elsevier}
}

@article{zong2012properties,
  title={Properties of the solutions to the two-component b-family systems},
  author={Zong, X.},
  journal={Nonlinear Analysis: Theory, Methods \& Applications},
  volume={75},
  number={17},
  pages={6250--6259},
  year={2012},
  publisher={Elsevier}
}

@article{zhu2013cauchy,
  title={On the Cauchy problem for the two-component b-family system},
  author={Zhu, M. and Xu, J.},
  journal={Mathematical Methods in the Applied Sciences},
  volume={36},
  number={16},
  pages={2154--2173},
  year={2013},
  publisher={Wiley Online Library}
}

@article{gui2010global,
  title={On the global existence and wave-breaking criteria for the two-component Camassa--Holm system},
  author={Gui, G. and Liu, Y.},
  journal={Journal of Functional Analysis},
  volume={258},
  number={12},
  pages={4251--4278},
  year={2010},
  publisher={Elsevier}
}

@article{guan2010global,
  title={Global existence and blow-up phenomena for an integrable two-component Camassa--Holm shallow water system},
  author={Guan, C. and Yin, Z.},
  journal={Journal of Differential Equations},
  volume={248},
  number={8},
  pages={2003--2014},
  year={2010},
  publisher={Elsevier}
}

@article{guo2008blow,
  title={Blow up, global existence, and infinite propagation speed for the weakly dissipative Camassa--Holm equation},
  author={Guo, Z.},
  journal={Journal of Mathematical Physics},
  volume={49},
  number={3},
  pages={033516},
  year={2008},
  publisher={American Institute of Physics}
}

@article{wu2009global,
  title={Global existence and blow-up phenomena for the weakly dissipative Camassa--Holm equation},
  author={Wu, S. and Yin, Z.},
  journal={Journal of Differential Equations},
  volume={246},
  number={11},
  pages={4309--4321},
  year={2009},
  publisher={Elsevier}
}

@article{novruzov2012blow,
  title={On blow-up phenomena for the weakly dissipative Camassa--Holm equation},
  author={Novruzov, E.},
  journal={Journal of Engineering Mathematics},
  volume={1},
  number={77},
  pages={187--195},
  year={2012}
}

@article{shen2011time,
  title={Time-periodic solution of the weakly dissipative Camassa-Holm equation},
  author={Shen, C.},
  journal={International Journal of Differential Equations},
  volume={2011},
  year={2011},
  publisher={Hindawi}
}

@article{guo2011global,
  title={Global weak solutions to the weakly dissipative Degasperis--Procesi equation},
  author={Guo, Y. and Lai, S. and Wang, Y.},
  journal={Nonlinear Analysis: Theory, Methods \& Applications},
  volume={74},
  number={15},
  pages={4961--4973},
  year={2011},
  publisher={Elsevier}
}

@article{guo2009some,
  title={Some properties of solutions to the weakly dissipative Degasperis--Procesi equation},
  author={Guo, Z.},
  journal={Journal of Differential Equations},
  volume={246},
  number={11},
  pages={4332--4344},
  year={2009},
  publisher={Elsevier}
}

@article{wuescher2009global,
  title={Global existence and blow-up phenomena for a weakly dissipative Degasperis-Procesi equation},
  author={Wu, S. and Escher, J. and Yin, Z.},
  journal={Discrete \& Continuous Dynamical Systems-B},
  volume={12},
  number={3},
  pages={633},
  year={2009},
  publisher={American Institute of Mathematical Sciences}
}

@article{wuyin2008blow,
  title={Blow-up and decay of the solution of the weakly dissipative Degasperis--Procesi equation},
  author={Wu, S. and Yin, Z.},
  journal={SIAM journal on mathematical analysis},
  volume={40},
  number={2},
  pages={475--490},
  year={2008},
  publisher={SIAM}
}

@article{hu2011global,
  title={Global existence and blow-up phenomena for a weakly dissipative periodic 2-component Camassa-Holm system},
  author={Hu, Q.},
  journal={Journal of mathematical physics},
  volume={52},
  number={10},
  pages={103701},
  year={2011},
  publisher={American Institute of Physics}
}

@article{lenells2013weakly,
  title={On the weakly dissipative Camassa--Holm, Degasperis--Procesi, and Novikov equations},
  author={Lenells, J. and Wunsch, M.},
  journal={Journal of Differential Equations},
  volume={255},
  number={3},
  pages={441--448},
  year={2013},
  publisher={Elsevier}
}

@article{constantin1998well,
  title={Well-posedness, global existence, and blowup phenomena for a periodic quasi-linear hyperbolic equation},
  author={Constantin, A. and Escher, J.},
  journal={Communications on Pure and Applied Mathematics},
  volume={51},
  number={5},
  pages={475--504},
  year={1998},
  publisher={Wiley Online Library}
}

@article{liuyin2006global,
  title={Global existence and blow-up phenomena for the Degasperis-Procesi equation},
  author={Liu, Y. and Yin, Z.},
  journal={Communications in mathematical physics},
  volume={267},
  number={3},
  pages={801--820},
  year={2006},
  publisher={Springer}
}

@article{constantin1998global,
  title={Global existence and blow-up for a shallow water equation},
  author={Constantin, A. and Escher, J.},
  journal={Annali della Scuola Normale Superiore di Pisa-Classe di Scienze},
  volume={26},
  number={2},
  pages={303--328},
  year={1998}
}

@article{liu2006global,
  title={Global existence and blow-up solutions for a nonlinear shallow water equation},
  author={Liu, Y.},
  journal={Mathematische Annalen},
  volume={335},
  number={3},
  pages={717--735},
  year={2006},
  publisher={Springer}
}

@article{danchin2001few,
  title={A few remarks on the {Camassa-Holm} equation},
  author={Danchin, R.},
  journal={Differential and Integral Equations},
  volume={14},
  number={8},
  pages={953--988},
  year={2001},
  publisher={Khayyam Publishing, Inc.}
}

@article{zhang2020periodic,
  title={The periodic Cauchy problem for a two-component non-isospectral cubic Camassa-Holm system},
  author={Zhang, L. and Qiao, Z.},
  journal={Journal of Differential Equations},
  volume={268},
  number={3},
  pages={1270--1305},
  year={2020},
  publisher={Elsevier}
}

@article{zhangqiaocmp,
  title={Global-in-time solvability and blow-up for a non-isospectral two-component cubic Camassa-Holm system in a critical Besov space},
  author={Zhang, L. and Qiao, Z.},
  journal={In press},
  year={2020}
}

\end{document}